\DeclareMathAlphabet{\mathbbold}{U}{bbold}{m}{n}	
\newcommand{\R}{\mathbb{R}}
\newcommand{\Z}{\mathbb{Z}}
\newcommand{\C}{\mathbb{C}}
\newcommand{\N}{\mathbb{N}}
\renewcommand{\mid}{\;:\;}
\renewcommand{\ge}{\geqslant}
\renewcommand{\leq}{\leqslant}
\renewcommand{\geq}{\geqslant}
\newcommand{\eps}{\varepsilon}
\renewcommand{\epsilon}{\varepsilon}
\newcommand{\lang}{\langle}
\newcommand{\rang}{\rangle}
\newcommand{\T}[1]{{1}^\mathrm{T}}
\newcommand{\ie}{\emph{i.e.,~}}
\newcommand{\eg}{\emph{e.g.,~}}
\newcommand{\Id}{\mathbbm{I}}
\newcommand*\diff{\mathop{}\!\mathrm{d}}
\newcommand{\cv}{\to}
\newcommand{\cvl}[1]{\underset{#1}{\longrightarrow}}
\newcommand{\cvf}{\overset{w}{\rightharpoonup}}
\newcommand{\tow}{\cvf}
\newcommand{\GG}{\mathbb{G}}
\numberwithin{equation}{section}
\DeclareMathOperator{\argmin}{argmin}
\newtheorem{thrm}{Theorem}[section]
\newtheorem{crllr}[thrm]{Corollary}
\newtheorem{lmm}[thrm]{Lemma}
\newtheorem{prpstn}[thrm]{Proposition}
\theoremstyle{definition}
\newtheorem{dfntn}[thrm]{Definition}
\newtheorem{assumption}[thrm]{Assumption}
\newtheorem{condition}[thrm]{Condition}
\newtheorem{xmpl}[thrm]{Example}
\newtheorem{rmrk}[thrm]{Remark}
\newcommand{\Kx}{\mathcal{K}}
\newcommand{\Kw}{\widehat{\mathcal{K}}}
\newcommand{\Ke}{\left( \Kw - \plong(\Kx) \right)}
\newcommand{\Kz}{\Xi}
\newcommand{\Kwn}{\mathcal{K}}
\newcommand{\wdyn}{w}
\newcommand{\rot}{A}
\newcommand{\abs}[1]{\left\vert#1\right\vert}
\newcommand{\one}{\mathbbold{1}}
\newcommand{\NN}{\mathcal{N}}
\newcommand{\fonction}[5]{
\begin{equation*}
\displaystyle
\begin{array}{lrcl}
{#1}: & #2 & \longrightarrow & #3 \\
    & #4 & \longmapsto & #5
\end{array}
\end{equation*}}
\newcommand{\etat}{z}
\newcommand{\etath}{\hat{\etat}}
\newcommand{\zhat}{\etath}
\newcommand{\mA}{\mathcal{A}}
\newcommand{\mB}{\mathcal{B}}
\newcommand{\mC}{\mathcal{C}}
\newcommand{\mQ}{\mathcal{Q}}
\newcommand{\D}{\mathcal{D}}
\newcommand{\Dd}{\mathcal{D}_\delta}
\newcommand{\e}{e}
\newcommand{\re}{r_\eps}
\newcommand{\XX}{Z}
\newcommand{\YY}{Y}
\newcommand{\Kclass}{\mathcal{K}}
\newcommand{\Kinf}{\Kclass_\infty}
\newcommand{\lin}{\mathscr{L}}
\newcommand{\linx}{\lin(\XX)}
\newcommand{\Ux}{\mathcal{U}_x}
\newcommand{\Uw}{\mathcal{U}_\wdyn}
\newcommand{\Xd}{\mathcal{X}_0}
\newcommand{\xhat}{\hat{x}}
\newcommand{\psn}{\mathcal{N}}
\newcommand{\dom}{\mathcal{D}}
\newcommand{\doma}{\dom}
\newcommand{\sg}{\mathbb{T}}
\newcommand{\sgeps}{\mathbb{S}}
\newcommand{\evol}{\sg}
\newcommand{\evoleps}{\sgeps}
\newcommand{\opa}{\mA}
\newcommand{\opc}{\mC}
\newcommand{\rr}{\alpha}
\newcommand{\mes}{y}
\newcommand{\plong}{\uptau}
\newcommand{\inv}{\uppi}
\newcommand{\qform}{Q}
\newcommand{\norm}[1]{\left\|#1\right\|_\XX}
\newcommand{\normA}[2]{\left\|#1\right\|_{#2}}
\newcommand{\psX}[2]{\lang#1,#2\rang_\XX}
\newcommand{\xdot}{\dot{x}}
\newcommand{\matR}{\rot}
\newcommand{\RR}{\mathcal{R}}
\renewcommand{\S}{S}
\newcommand{\rep}{\rho}
\newcommand{\ray}{r}
\newcommand{\B}{B}
\newcommand{\mm}{\mathfrak{m}}
\newcommand{\linxy}{\lin(\XX, \C^\mm)}
\newcommand{\hh}{\mathfrak{h}}
\newcommand{\yy}{\mathfrak{y}}
\newcommand{\Ra}{R_0}
\newcommand{\Rb}{R_1}
\newcommand{\Rc}{R_2}
\newcommand{\dt}{\Delta}
\newcommand{\oo}{o}
\newcommand{\bornuval}{\kappa \frac{\jj}{\mu} + 16 \nu^2 \delta }
\newcommand{\bornu}{u_{\max}}
\newcommand{\psnlim}{\psn^2_\infty}
\newcommand{\carac}{\mathcal{I}}
\newcommand{\TT}{\mathcal{T}}
\newcommand{\jj}{j}
\newcommand{\angleS}{\theta}
\newcommand{\varS}{s}
\newcommand{\kmin}{k_{\min}}
\newcommand{\sousz}{I}
\newcommand{\ttau}{t'}
\newcommand{\frakr}{\mathfrak{R}}
\newcommand{\La}{\mathcal{L}_\alpha}
\newcommand{\Lf}{\ell_{\invf}}
\newcommand{\Linv}{\ell_{\inv}}
\newcommand{\Lplong}{\ell_{\plong}}
\newcommand{\invf}{\mathfrak{f}}
\newcommand{\ubar}[1]{\underline{#1}} %
\newcommand{\BB}{\mathcal{B}}
\newcommand{\EE}{\mathcal{E}}
\title{New perspectives
on output feedback stabilization\\
at an unobservable target\footnote{This research was partially funded by the French Grant ANR ODISSE (ANR-19-CE48-0004-01)
and by the ANR SRGI (ANR-15-CE40-0018).}}
\author[1]{Lucas Brivadis}
\affil[1]{
Univ. Lyon, Universit\'e Claude Bernard Lyon 1, CNRS, LAGEPP UMR 5007, 43 bd du 11 novembre 1918, F-69100 Villeurbanne, France,
\texttt{lucas.brivadis@univ-lyon1.fr}, \texttt{ulysse.serres@univ-lyon1.fr}.}
\author[2]{Jean-Paul Gauthier}
\affil[2]{Universit\'e de Toulon, Aix Marseille Univ, CNRS, LIS, France, \texttt{jean-paul.gauthier@univ-tln.fr}.} 
\author[3]{Ludovic Sacchelli}
\affil[3]{
Inria, Université Côte d’Azur, LJAD, CNRS, MCTAO team, Sophia Antipolis, France, \texttt{ludovic.sacchelli@inria.fr}.}
\author[1]{Ulysse Serres}
\date{\today}
\begin{document}

\maketitle

\begin{abstract}
We address the problem of dynamic output feedback stabilization at an unobservable target point.
The challenge lies in according the antagonistic nature of the objective and the properties of the system:
the system tends to be less observable as it approaches the target.
We illustrate two main ideas: well chosen perturbations of a state feedback law can yield new observability properties of the closed-loop system, and embedding systems into bilinear systems admitting observers with dissipative error systems allows to mitigate the observability issues.
We apply them on a case of systems with linear dynamics and nonlinear observation map and make use of an \emph{ad hoc} finite-dimensional embedding.
More generally, we introduce a new strategy based on infinite-dimensional unitary embeddings.
To do so, we extend the usual definition of dynamic output feedback stabilization in order to allow infinite-dimensional observers fed by the output.
We show how this technique, based on representation theory, may be applied to achieve output feedback stabilization at an unobservable target.
\end{abstract}

\noindent
\textbf{Keywords:} {Output feedback, Stabilization, Observability, Nonlinear systems, Dissipative systems, Representation theory}

\section{Introduction}

The problem of output feedback stabilization is one of deep interest in control theory.
It consists of stabilizing the state of a dynamical system, that is only partially known, to a target point.
Although a vast literature tackles this topic (see \cite{AndrieuPraly2009}, and references therein), some fundamental problems remain mostly open.
The issue can be formulated in the following manner.
Let $n$, $m$ and $p$ be positive integers, $f:\R^n\times\R^p\to\R^n$ and $h:\R^n\to\R^m$.
For all $u\in C^0(\R_+, \R^p)$, consider the following observation-control system:
\begin{equation}\label{E:system_general}
\left\{
\begin{aligned}
&\dot{x}=f(x, u)
\\
&y= h(x)
\end{aligned}
\right.
\end{equation}
where $x$ is the state of the system, $u$ is the control (or input) and $y$ is the observation (or output).
We assume that $f$ is uniformly locally Lipschitz with respect to $x$ and continuous.
We may as well assume that the target point at which we aim to stabilize $x$ is the origin $0\in\R^n$, $h(0)=0$ and $f(0, 0)=0$.

\begin{dfntn}[Dynamic output feedback stabilizability]\label{def:stab_out}
System~\eqref{E:system_general} is said to be \emph{locally} (resp. \emph{globally}) \emph{stabilizable by means of a dynamic output feedback} if and only if the following holds.

There exist two continuous map $\nu:\R^q\times\R^p\times\R^m\to\R^q$ and $\varpi:\R^q\times\R^m\to\R^p$ for some positive integer $q$ such that $(0, 0)\in\R^n\times\R^q$ is a locally (resp. globally) asymptotically stable\footnote{
Recall that a dynamical system is said to be asymptotically stable at an equilibrium point with some basin of attraction if and only if
each initial condition in the basin of attraction yields at least one solution to the corresponding Cauchy problem, each solution converges to the equilibrium point, and the equilibrium point is Lyapunov stable.
}
equilibrium point of the following system:
\begin{equation}\label{E:system_stab}
\left\{
\begin{aligned}
&\dot{x}=f(x, u)
\\
&y= h(x)
\end{aligned}
\right.
,\qquad
\left\{
\begin{aligned}
&\dot{\wdyn}=\nu(\wdyn, u, y)
\\
&u= \varpi(\wdyn, y).
\end{aligned}
\right.
\end{equation}

Additionally, if for any compact set $\Kx\subset\R^n$, there exist two continuous maps $\nu:\R^q\times\R^p\times\R^m\to\R^q$ and $\varpi:\R^q\times\R^m\to\R^p$ for some positive integer $q$, and a compact set $\Kw\subset\R^q$ such that $(0, 0)\in\R^n\times\R^q$ is an asymptotically stable equilibrium point of \eqref{E:system_stab} with basin of attraction containing $\Kx\times\Kw$,
then \eqref{E:system_general} is said to be \emph{semi-globally stabilizable by means of a dynamic output feedback}.
\end{dfntn}

Without loss of generality, we assume that if \eqref{E:system_stab} is locally asymptotically stable at $(0, 0)$, then the value of the control at the target point is zero: $\varpi(0, 0)=0\in\R^p$.

A common strategy to achieve dynamic output feedback stabilization consists in finding a stabilizing state feedback, designing an observer system that learns the state from the dynamics of its output, and using as an input the state feedback applied to the observer.
For linear systems, this corresponds to the so-called separation principle, which consists of designing ``separately'' a stabilizing state feedback law and a state observer. 
This strategy is known to fail in general for nonlinear systems.
In \cite{jouan} and  \cite{TeelPraly1994, TeelPraly1995}, the authors proved under a \emph{complete uniform observability} assumption that any system semi-globally stabilizable by means of a static state feedback is also semi-globally stabilizable by means of a dynamic output feedback.

\begin{dfntn}[Observability]\label{def:obs}
System~\eqref{E:system_general} is observable for some input $u\in C^0(\R_+, \R^p)$ in time $T>0$ if and only if,
any two solutions $x, \tilde{x}$ of \eqref{E:system_general} whose corresponding outputs $y$ and $\tilde{y}$ are equal almost everywhere on $[0, T]$, must also be equal almost everywhere on $[0, T]$.
\end{dfntn}
Complete uniform observability required in \cite{TeelPraly1994, TeelPraly1995}
implies
observability for all inputs and all times.
However, as proved in \cite{Gauthier_book}, 
it is not generic for nonlinear systems to be completely uniformly observable when 
the dimension of the output is less than or equal to the dimension of the input.
The problem of dynamic output feedback stabilization remains open when singular inputs (that are, inputs that make the system unobservable) exist.
Crucially, observability singularities prevent from applying classical tried-and-tested methods. However, such difficulties occur in practical engineering systems,
where original strategies need to be explored
\cite{pasillasxbs,Rapaport,9172770,ludo,flaya2019,surroop2020adding,combes2016adding}, leading to a renewal of interest in the issue in recent years.
The main difficulties arise when the control input
corresponding to the equilibrium point of \eqref{E:system_stab} is singular.
Indeed, a contradiction may occur between the stabilization of the state at the target point, and the fact that the observer system may fail to properly estimate the state near the target.
Various techniques have been introduced to remove this inconsistency, on which this paper focuses.

Most of them rely on a
modification
of the input, that helps the observer system to estimate the state even near the target point.
This strategy was employed in \cite{Coron1994} (that achieved local stabilization by using a periodic time-dependent perturbation), in \cite{ShimTeel2002} (that achieved practical stabilization by using a ``sample and hold'' time-dependent perturbation),
and more recently in \cite{brivadis:hal-03180479} (that achieved global stabilization on a specific class of systems).
Let us also mention the works of \cite{combes2016adding, flaya2019, surroop2020adding}, that also rely on a high-frequency excitation of the input.
Adopting another point of view in line with \cite{MarcAurele, brivadis2019avoiding}, we are interested in
 time-independent
perturbations of the feedback laws.

Another important tool in stabilization theory
is the use of weakly contractive systems, for which flows are non-expanding \cite{andrieu:hal-02611605,Manchester_contraction2014,Jafarpour_contraction2020,jq,mazenc}.
The fact that the error between the state and the observer
has contractive dynamics has proven to be a powerful tool in \cite{MarcAurele, sacchelli2019dynamic},
because it guarantees that the error is non-increasing, no matter the observability properties of the system.
More precisely,
in \cite{MarcAurele} a strategy of feedback perturbation is used in conjunction with the contraction property of a quantum control system to achieve stabilization at an unobservable target.
In \cite{sacchelli2019dynamic}, the authors proved that for non-uniformly observable state-affine systems, detectability at the target is a sufficient condition to set up a separation principle.
In order to access such contraction properties on a wider class of systems, we turn to embeddings techniques. Embedding the original nonlinear system into a bilinear system and design an observer with dissipative error is the second guideline that we aim to follow.

In the present paper, we wish to coalesce the insights provided by these previous works in order to come up with solutions to attack the problem of dynamic output feedback stabilization at an unobservable target.
Embedding techniques have to rely on the systems structure. Inspired by an example from \cite{Coron1994}, we focus here on systems with linear conservative dynamics and nonlinear observation maps. The strategies developed in the paper match the restrictions we impose on the systems, but aim to illustrate a more general framework in which to attack stabilization at unobservable targets. 

A direct method to linearize the output is to consider it as an additional state coordinate. If an observer with dissipative error system can be found for the new embedded system, 
we prove that a feedback law perturbation approach can be efficient for dynamic output feedback stabilization.
However, the existence of such an observer is not guaranteed.
We attempt to overcome this difficulty by proposing embeddings into infinite-dimensional unitary systems with linear output. As illustrated in \cite{Celle-etal.1989}, this idea helps in designing dissipative observers for a wider array of nonlinear systems. Furthermore, infinite-dimensional observers are not limited by topological obstructions that may occur in the finite-dimensional context (see Section~\ref{sec:coron}). 
Following this approach in the context of output feedback stabilization
yields a coupled ODE--PDE system that demands an \emph{ad-hoc} functional framework. We set up this strategy on a two-dimensional system presenting an archetypal singularity at the target point.

\bigskip

\noindent
\textbf{Content.} Necessary conditions for dynamic output feedback stabilization are discussed in Section~\ref{sec:nc}.
The finite-dimensional strategy is explored in Section~\ref{sec:finite} (first, topological obstructions are raised in \ref{sec:coron}, while a positive stabilization result is proved in \ref{sec:converse}). 
In Section~\ref{sec:infinite},
we set up preliminaries for our infinite-dimensional strategy. In the final Section~\ref{sec:example_infinie}, 
these concepts are applied to achieve a stabilization result on a coupled ODE--PDE state-observer system.

\bigskip

\noindent
\textbf{Notations.}
Denote by $\R$ (resp. $\R_+$) the set of real (resp. non-negative) numbers, by $\C$ the set of complex numbers and by $\Z$ (resp. $\N$) the set of integers (resp. non-negative integers).
The Euclidean norm over $\R^n$ (or $\C^n)$ is denoted by $|\cdot|$ for any $n\in\N$.
The real and imaginary parts of $z\in\C$ are denoted by $\Re z$ and $\Im z$, respectively.
For any matrix $A\in\R^{n\times m}$, the transpose of $A$ is denoted by $A'$.
For any normed vector space
$(\XX, \|\cdot\|_\XX)$, we denote by $B_\XX(x, r)$ (resp. $\bar{B}_\XX(x, r)$) the open (resp. closed) ball of $\XX$ centered at $x\in \XX$ of radius $r>0$ for the norm $\|\cdot\|_\XX$.
The identity operator over $\XX$ is denoted by $\Id_\XX$.
For all $k\in\N\cup\{\infty\}$ and all interval $U\subset\R$, the set $C^k(U, \XX)$ is the set of $k$-continuously differentiable functions from $U$ to $\XX$.

\medskip

For any Hilbert space $\XX$, denote by $\psX{\cdot}{\cdot}$ the inner product over $\XX$ and $\norm{\cdot}$ the induced norm.
If $\YY$ is also a Hilbert space, then $\lin(\XX, \YY)$ denotes the space of bounded linear maps from $\XX$ to $\YY$, and $\linx = \lin(\XX, \XX)$.
We identify the Hilbert spaces with their dual spaces via the canonical isometry, so that the adjoint of $\opc\in\lin(\XX, \YY)$, denoted by $\opc^*$, lies in $\lin(\YY, \XX)$.
Let us recall the characterization of the strong and weak topologies on $\XX$.
A sequence $(x_n)_{n\geq0}\in\XX^\N$ is said to be strongly convergent to some $x^\star\in\XX$ if
$\norm{x_n-x^\star}\cv 0$ as $n\cv+\infty$, and we shall write $x_n\cv x^\star$ as $n\cv+\infty$.
It is said to be weakly convergent to $x^\star$ if
$\psX{x_n-x^\star}{\psi}\cv 0$ as $n\cv+\infty$ for all $\psi\in\XX$, and we shall write $x_n\cvf x^\star$ as $n\cv+\infty$.
The strong topology on $\XX$ is finer than the weak topology (see, \eg \cite{Brezis} for more properties on these usual topologies).

\section{Necessary conditions}\label{sec:nc}

The aim of the paper is to discuss dynamic output feedback stabilization strategies in the presence of observability singularities in contrast to 
\cite{TeelPraly1994, TeelPraly1995}, where uniform observability is assumed. However, in trying to weaken the observability assumption in the context dynamic output feedback stabilization, one first needs to check that the goal is still achievable. In this short section, necessary conditions for dynamic output feedback stabilizability are discussed. These should be put in  perspective with sufficient conditions that can be found in the literature, as well as those we exhibit in the paper.

\begin{dfntn}[State feedback stabilizability]\label{def:stab}
System~\eqref{E:system_general} is said to be \emph{locally} (resp. \emph{globally}) \emph{stabilizable by means of a (static) state feedback} if and only if
there exists a continuous map $\phi:\R^n\to\R^p$ such that
$0\in\R^n$ is a locally (resp. globally) asymptotically stable equilibrium point of
\begin{equation}\label{E:system_stab_state}
\left\{
\begin{aligned}
&\dot{x}=f(x, u)
\\
&u=\phi(x).
\end{aligned}
\right.
\end{equation}

Additionally, if for any compact set $\Kx\subset\R^n$,
there exists a continuous map $\phi:\R^n\to\R^p$ such that
$0\in\R^n$ is an asymptotically stable equilibrium point of \eqref{E:system_stab_state} with basin of attraction containing $\Kx$,
then \eqref{E:system_general} is said to be \emph{semi-globally stabilizable by means of a static state feedback}.

\end{dfntn}

The problem of dynamic state feedback stabilization of \eqref{E:system_general} is equivalent to the dynamic output feedback stabilization in the case where $h(x)=x$.
Therefore, dynamic state feedback stabilizability of \eqref{E:system_general} is a necessary condition for dynamic output feedback stabilizability.
One may wonder if \emph{static} state feedback stabilizability of \eqref{E:system_general} is  necessary  for dynamic output feedback stabilizability.
In \cite{AndrieuPraly2009}, the authors answer by the positive if a sufficiently regular selection function can be found. We recall their result below.

\begin{thrm}[\!\!{\cite[Lemma 1, (1)]{AndrieuPraly2009}}]
Assume that \eqref{E:system_stab} is locally asymptotically stable at $(0, 0)$ with basin of attraction\footnote{In \cite[Lemma 1, (1)]{AndrieuPraly2009},
the authors state only a global version of the result, that is, $\Ux=\R^n$ and $\Uw=\R^q$. However, the proof remains identical in the other cases.
}
$\Ux\times\Uw$. Let $V$ be a $C^\infty(\Ux\times\Uw, \R_+)$ strict proper Lyapunov function of \eqref{E:system_stab}.
If there exists a selection map $\Ux\ni x\mapsto \phi(x)\in \argmin_{\Uw}V(x,\cdot)$ which is locally Hölder of order strictly larger than $\frac{1}{2}$, then
\eqref{E:system_stab_state} is locally asymptotically stable at $0$ with basin of attraction containing $\Ux$.
\end{thrm}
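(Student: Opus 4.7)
The plan is to exhibit the static state feedback $u = \varpi(\phi(x), h(x))$ as a stabilizer of \eqref{E:system_stab_state} by showing that $W(x) := V(x, \phi(x)) = \min_{w \in \Uw} V(x, w)$ is a strict Lyapunov function on $\Ux$. First I would verify that $W$ is an admissible candidate. Continuity of $W$ follows from the continuity of $\phi$ (since it is locally Hölder) and from $V \in C^\infty$. Strictness of $V$ forces $\phi(0) = 0$ (as $0$ is the unique minimizer of $V(0, \cdot)$ on $\Uw$), so $W(0) = 0$ and $W(x) > 0$ for $x \ne 0$; moreover $V(x, w) \to +\infty$ as $w \to \partial \Uw$ by properness of $V$, which places $\phi(x)$ in the interior of $\Uw$. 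Properness of $W$ on $\Ux$ then follows because $\{W \le c\}$ is contained in the projection onto $\Ux$ of the compact set $\{V \le c\}$.

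The core step is to show that $D^+ W$ is strictly negative away from $0$ along any trajectory $x^s$ of the closed loop. Setting $\Delta x = x^s(t+\tau) - x^s(t) = O(\tau)$ and $\Delta w = \phi(x^s(t+\tau)) - \phi(x^s(t))$, the Hölder bound gives $|\Delta w| = O(\tau^\eta)$ with $\eta > \frac{1}{2}$. A second-order Taylor expansion of $V$ around $(x^s(t), \phi(x^s(t)))$ yields
\[
W(x^s(t+\tau)) - W(x^s(t)) = \partial_x V \cdot \Delta x + \partial_w V \cdot \Delta w + O\bigl(|\Delta x|^2 + |\Delta x|\,|\Delta w| + |\Delta w|^2\bigr).
\]
The crucial observation is that $\phi(x^s(t))$ is an interior minimizer of $V(x^s(t), \cdot)$, so $\partial_w V(x^s(t), \phi(x^s(t))) = 0$ and the linear-in-$\Delta w$ contribution vanishes. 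The quadratic remainder is of order $\tau^2 + \tau^{1+\eta} + \tau^{2\eta}$, and the last term is $o(\tau)$ precisely because $\eta > \frac{1}{2}$. This is where the Hölder threshold is essential and is the main technical obstacle I anticipate: a smaller exponent would allow the $\tau^{2\eta}$ error to overwhelm the descent rate.

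Dividing by $\tau$ and letting $\tau \to 0^+$, the surviving term yields
\[
D^+ W(x^s(t)) \le \partial_x V(x^s(t), \phi(x^s(t))) \cdot f\bigl(x^s(t), \varpi(\phi(x^s(t)), h(x^s(t)))\bigr),
\]
which is exactly $\dot V$ along the full system \eqref{E:system_stab} evaluated at $(x, w) = (x^s(t), \phi(x^s(t)))$, since the missing $\partial_w V \cdot \nu$ term vanishes at this same $w$. Strictness of $V$ as a Lyapunov function of \eqref{E:system_stab} makes this quantity strictly negative for $x^s(t) \ne 0$, so $D^+ W < 0$ off the origin. Combined with the continuity, positive definiteness and properness of $W$ established above, the classical Lyapunov theorem in Dini-derivative form delivers local asymptotic stability of \eqref{E:system_stab_state} at $0$ with basin of attraction containing $\Ux$.
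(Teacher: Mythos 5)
Your argument is correct and is essentially the proof of \cite[Lemma 1]{AndrieuPraly2009} that the paper cites without reproducing: take $W(x)=\min_{\Uw}V(x,\cdot)=V(x,\phi(x))$, use that $\partial_w V$ vanishes at the interior minimizer so the first-order term in $\Delta w$ drops out, and use the H\"older exponent $\eta>\frac{1}{2}$ exactly to make the $O(\tau^{2\eta})$ remainder $o(\tau)$ before passing to the Dini derivative. No gap to report; as a side remark, the one-sided bound $W(x^s(t+\tau))\leq V(x^s(t+\tau),\phi(x^s(t)))$ would let you reach the same upper estimate on $D^+W$ with the argmin frozen, but your two-sided expansion is the route taken in the cited reference and correctly isolates where the threshold $\frac{1}{2}$ is used.
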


Therefore, up to the existence of a sufficiently regular selection map, this result implies that the following local (resp. semi-global, global) condition is necessary for the local (resp. semi-global, global) stabilizability of \eqref{E:system_general} by means of a dynamic output feedback.

\begin{condition}[State feedback stabilizability
---
local, semi-global, global]\label{hyp:state_feedback}
System~\eqref{E:system_general} is locally (resp. semi-globally, globally) stabilizable by means of a static state feedback.
\end{condition}

In \cite{Coron1994}, J.-M. Coron stated two additional conditions that he proved to be sufficient when local static state feedback stabilizability holds to ensure local dynamic output feedback stabilizability, provided that one allows the output feedback to depend on time (which we do \emph{not} allow in this paper).
The two following conditions are weaker versions of the ones of \cite{Coron1994}. We prove that these two conditions are necessary to ensure dynamic output feedback stabilizability.
The first one, known as \emph{$0$-detectability} is also used by E. Sontag in \cite{sontag1981conditions} in the context of abstract nonlinear regulation theory.

Before stating this condition, let us recall the following.
For any input $u\in C^0(\R_+, \R^n)$,
and any initial condition $x_0\in\R^n$,
there exists exactly one maximal solution of \eqref{E:system_general}, defined on $[0, T(x_0, u))$. This solution, denoted by $\varphi_t(x_0, u)$, is such that $\varphi_0(x_0, u) = x_0$ and $\frac{\partial\varphi_t(x_0, u)}{\partial t} = f( \varphi_t(x_0, u), u(t))$ for all $t\in[0, T(x_0, u))$.

\begin{condition}[0-detectability
---
local, global]\label{hyp:distinguish}
Let $\Xd = \{x_0\in\R^n\mid \forall t\in[0, T(x_0, 0)),$ $h(\varphi_t(x_0, 0))=0\}$.
Then $0\in\Xd$ is a locally (resp. globally) asymptotically stable equilibrium point of the vector field $\Xd\ni x\mapsto f(x, 0)$.
\end{condition}

\begin{thrm}\label{th:necessary_distinguish}
If \eqref{E:system_general} is locally (resp. semi-globally, globally) stabilizable by means of a dynamic output feedback, then  Condition~\ref{hyp:distinguish} holds locally (resp. globally, globally).
\end{thrm}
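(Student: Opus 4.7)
The plan is to show that for any $x_0 \in \Xd$ (belonging to an appropriate neighborhood of $0$ in the local case), the curve $t\mapsto (\varphi_t(x_0, 0), 0)$ is a valid solution of the closed-loop system \eqref{E:system_stab} starting at $(x_0, 0)$, and then to extract the 0-detectability claim directly from the asymptotic stability of $(0,0)$ for \eqref{E:system_stab}.

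First, I would collect three facts that make $w\equiv 0$ compatible with $x(t) = \varphi_t(x_0,0)$. By the definition of $\Xd$, we have $h(\varphi_t(x_0,0)) = 0$ on the maximal existence interval; in particular $h(x_0)=0$. The paper already assumes (without loss of generality) that $\varpi(0,0)=0$. Finally, since $(0,0)$ is an equilibrium of \eqref{E:system_stab} and $h(0)=0$, $\varpi(0,0)=0$, the equilibrium condition forces $\nu(0,0,0)=0$. Putting these together, the pair $(\tilde x(t), \tilde w(t)) := (\varphi_t(x_0,0), 0)$ satisfies $\tilde y(t) = h(\tilde x(t))=0$, $\tilde u(t) = \varpi(\tilde w(t), \tilde y(t)) = \varpi(0,0)=0$, and $\dot{\tilde w}(t) = \nu(0,0,0) = 0$. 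Thus $(\tilde x, \tilde w)$ is a solution of the closed-loop system \eqref{E:system_stab} issued from $(x_0, 0)$, with the same maximal existence interval as $\varphi_\cdot(x_0, 0)$.

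Now I would separate the three regimes. In the global case, $(x_0, 0)$ lies in the basin of attraction $\R^n \times \R^q$, so by Definition~\ref{def:stab_out} the solution $(\tilde x, \tilde w)$ is defined on $\R_+$ and converges to $(0,0)$, yielding $\varphi_t(x_0, 0) \to 0$; Lyapunov stability of $0$ for $x\mapsto f(x,0)$ on $\Xd$ is obtained by intersecting the Lyapunov-stability neighborhoods of $(0,0)$ in $\R^n \times \R^q$ with $\Xd\times\{0\}$. In the local case, the basin contains some neighborhood $\Ux \times \Uw$ of $(0,0)$; taking $x_0 \in \Xd \cap \Ux$ with $0 \in \Uw$ gives the same conclusion, so $0$ is locally asymptotically stable for the restricted vector field on $\Xd$. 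In the semi-global case, for each $x_0 \in \Xd$ I would pick a compact $\Kx \subset \R^n$ containing both $0$ and $x_0$; the definition provides feedback laws $\nu, \varpi$ and a compact $\Kw \ni 0$ such that the basin of $(0,0)$ for the corresponding closed-loop system contains $\Kx \times \Kw$, hence $(x_0, 0)$ belongs to it, and convergence $\varphi_t(x_0,0)\to 0$ follows. Lyapunov stability is obtained by applying this argument with $\Kx = \overline{B}_{\R^n}(0,\varepsilon)$.

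The only mildly subtle point — and the place where I would take care — is the possible non-uniqueness of solutions (since $\nu$, $\varpi$ are only continuous): I must rely on the fact that Definition~\ref{def:stab_out} demands convergence of \emph{every} solution starting in the basin, so it applies to the specific solution $(\tilde x,\tilde w)$ I exhibited, regardless of whether other solutions coexist. The remainder is essentially bookkeeping between the three stabilizability regimes and the corresponding detectability regimes.
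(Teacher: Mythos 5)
Your proposal is correct and follows essentially the same route as the paper: exhibit $t\mapsto(\varphi_t(x_0,0),0)$ as a trajectory of the closed-loop system~\eqref{E:system_stab} and read off attractivity and Lyapunov stability on $\Xd$ from the asymptotic stability of $(0,0)$, treating the three regimes separately. Your added justifications (that $\nu(0,0,0)=0$ follows from the equilibrium condition together with $\varpi(0,0)=0$, and the remark on non-uniqueness of solutions) only make explicit what the paper leaves implicit.
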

\begin{proof}

The set $\Xd$ is invariant for the vector field $x\mapsto f(x, 0)$ and $0\in\Xd$.
Let $x_0\in\Xd$.
Assume that \eqref{E:system_general} is locally stabilizable by means of a dynamic output feedback, and that $(x_0, 0)$ is in the basin of attraction of $(0, 0)$ for \eqref{E:system_stab}.

Then $t\mapsto (\varphi_t(x_0, 0), 0)$ is a trajectory of \eqref{E:system_stab} with initial condition $(x_0, 0)$.
Hence $\varphi_t(x_0, 0)$ is well-defined for all $t\geq0$ and tends towards $0$ as $t$ goes to infinity. Moreover, for all $R>0$, there exists $r>0$ such that, if $x_0\in B_{\R^n}(x_0, r)$, then $\varphi_t(x_0, 0)\in B_{\R^n}(x_0, R)$ for all $t\geq0$.

If we assume that \eqref{E:system_general} is globally stabilizable by means of a dynamic output feedback,
then the arguments still hold for any
$x_0\in\R^n$.
If \eqref{E:system_general} is only semi-globally stabilizable by means of a dynamic output feedback,
we first define $\Kx$ as in Definition~\ref{def:stab_out} containing $x_0$.
\end{proof}

\begin{condition}[Indistinguishability $\Rightarrow$ common stabilizability
---
local, global]\label{hyp:common}
For all $x_0$, $\tilde{x}_0$ in some neighborhood of $0\in\R^n$ (resp. for all $x_0$, $\tilde{x}_0$ in $\R^n$),
if for all $u\in C^0(\R_+, \R^p)$ such that
$T(x_0, u) = +\infty$
it holds that
$h(\varphi_t(x_0, u)) = h(\varphi_t(\tilde{x}_0, u))$
for all  $t\in[0, T(\tilde{x}_0, u))$,
then there exists $v\in C^0(\R_+, \R^p)$ such that
$\varphi_t(x_0, v)$ and $\varphi_t(\tilde{x}_0, v)$
are well-defined for all $t\in\R_+$ and
tend towards $0$ as $t$ goes to infinity.
\end{condition}

\begin{thrm}\label{th:necessary_common}
If \eqref{E:system_general} is locally (resp. semi-globally, globally) stabilizable by means of a dynamic output feedback, then  Condition~\ref{hyp:common} holds locally (resp. globally, globally).
\end{thrm}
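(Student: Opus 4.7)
The plan is to construct the common stabilizing input $v$ directly from a closed-loop trajectory of \eqref{E:system_stab} starting at $x_0$, and then to transport it to $\tilde x_0$ by means of the indistinguishability hypothesis. I would begin by selecting, using the stabilizability assumption, continuous maps $\nu, \varpi$, an observer dimension $q$, and a single observer initial condition $w_0 \in \R^q$ such that both $(x_0, w_0)$ and $(\tilde x_0, w_0)$ lie in the basin of attraction of $(0,0)$ for \eqref{E:system_stab}. In the local and global cases one can take $w_0 = 0$ (after restricting $x_0, \tilde x_0$ to a small enough neighborhood of $0$ in the local case); in the semi-global case, given $x_0, \tilde x_0 \in \R^n$, fix a compact $\Kx \subset \R^n$ containing both, obtain from Definition~\ref{def:stab_out} a compact $\Kw$ with $\Kx \times \Kw$ in the basin of attraction, and pick any $w_0 \in \Kw$.

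Next, let $(x(\cdot), w(\cdot))$ be a solution of the closed-loop Cauchy problem of \eqref{E:system_stab} from $(x_0, w_0)$; by the footnote of Definition~\ref{def:stab_out} it is defined on $\R_+$ and converges to $(0,0)$. Define $v(t) := \varpi(w(t), h(x(t)))$, which is continuous on $\R_+$. By construction $x(t) = \varphi_t(x_0, v)$, so $T(x_0, v) = +\infty$ and $\varphi_t(x_0, v) \to 0$ as $t \to +\infty$. Applying the hypothesis of Condition~\ref{hyp:common} to this very $v$ then yields $h(\varphi_t(\tilde x_0, v)) = h(\varphi_t(x_0, v)) = h(x(t))$ for every $t \in [0, T(\tilde x_0, v))$.

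Set $\tilde z(t) := \varphi_t(\tilde x_0, v)$ on $[0, T(\tilde x_0, v))$. The key observation is that the pair $(\tilde z, w)$ is itself a solution of the closed-loop Cauchy problem of \eqref{E:system_stab} from $(\tilde x_0, w_0)$ on this interval: indeed $\dot{\tilde z} = f(\tilde z, v)$ holds by definition of $\tilde z$, while the identity $h(\tilde z) = h(x)$ gives $v = \varpi(w, h(x)) = \varpi(w, h(\tilde z))$ and $\dot w = \nu(w, v, h(x)) = \nu(w, v, h(\tilde z))$, which matches \eqref{E:system_stab} exactly with initial datum $(\tilde x_0, w_0)$.

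Since $(\tilde x_0, w_0)$ lies in the basin of attraction, \emph{every} solution of the closed-loop Cauchy problem from this point is globally defined and converges to $(0,0)$, hence is bounded. Applied to $(\tilde z, w)$ this rules out finite-time blow-up of $\tilde z$, so $T(\tilde x_0, v) = +\infty$ and $\varphi_t(\tilde x_0, v) \to 0$, which together with the previous paragraph establishes Condition~\ref{hyp:common} for the pair $(x_0, \tilde x_0)$. The main obstacle I anticipate is the absence of uniqueness of closed-loop solutions (only continuity of $\nu$ and $\varpi$ is assumed), so one cannot simply identify $(\tilde z, w)$ with \emph{the} closed-loop trajectory from $(\tilde x_0, w_0)$; the argument must rely on the \emph{every solution converges} clause in the footnote of Definition~\ref{def:stab_out} to exclude both blow-up of $\tilde z$ and the failure of convergence.
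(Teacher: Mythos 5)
Your proposal is correct and follows essentially the same route as the paper's proof: take a closed-loop solution from $(x_0,w_0)$, extract the input $v=\varpi(w,h(x))$, use the indistinguishability hypothesis to show that $(\varphi_\cdot(\tilde x_0,v),w)$ is itself a closed-loop solution from $(\tilde x_0,w_0)$, and conclude from the fact that every solution issued from the basin of attraction is global and converges. Your explicit handling of the non-uniqueness of closed-loop solutions via the ``each solution converges'' clause of the footnote to Definition~\ref{def:stab_out} is a point the paper leaves implicit, but it is the same argument.
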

\begin{proof}
Let $x_0, \tilde{x}_0\in \R^n$ be such that for all $u\in C^0(\R_+, \R^p)$ such that
$T(x_0, u) = +\infty$
it holds that
$h(\varphi_t(x_0, u)) = h(\varphi_t(\tilde{x}_0, u))$
for all  $t\in[0, T(\tilde{x}_0, u))$,
Assume that \eqref{E:system_general} is locally stabilizable by means of a dynamic output feedback, and that $(x_0, 0)$, $(\tilde{x}_0, 0)$ are in the basin of attraction of $(0, 0)$ for \eqref{E:system_stab}.

Let $(x, \wdyn)$ be a solution of \eqref{E:system_stab} starting from $(x_0, 0)$.
Set $v = \varpi(\wdyn, h(x))$.
Then $T(x_0, v) = +\infty$ and $\varphi_t(x_0, v) \to 0$ as $t\to+\infty$.
Let $\tilde{x}(t) = \varphi_t(\tilde{x}_0, v)$ for all $t\in[0, T(\tilde{x}_0, v))$.
Since $h(\varphi_t(x_0, v)) = h(\varphi_t(\tilde{x}_0, v))$ for all $t\in[0, T(\tilde{x}_0, v))$,
$(\tilde{x}, \wdyn)$ is a solution of
\eqref{E:system_stab} starting from $(\tilde{x}_0, 0)$.
Hence $T(\tilde{x}_0, v)=+\infty$ and $\varphi_t(\tilde{x}_0, v) \to 0$ as $t\to+\infty$.

If we assume that \eqref{E:system_general} is globally stabilizable by means of a dynamic output feedback,
then the arguments still hold for any $x_0, \tilde{x}_0\in\R^n$.
If \eqref{E:system_general} is only semi-globally stabilizable by means of a dynamic output feedback,
we first define $\Kx$ as in Definition~\ref{def:stab_out} containing $x_0$ and $\tilde{x}_0$.
\end{proof}

\begin{rmrk}
In \cite{sacchelli2019dynamic}, the authors consider the problem of dynamic output feedback stabilization of \emph{dissipative} state-affine systems, that is, systems of the form
\begin{equation}\label{E:diss}
\left\{
\begin{aligned}
&\dot{x}= A(u)x+B(u)
\\
&y= C x
\end{aligned}
\right.
\end{equation}
where there exists some positive definite matrix $P\in\R^n\times\R^n$ such that, for all inputs $u$ in some admissible set,
\begin{align}\label{eq:diss}
    PA(u)+A(u)'P\leq0.
\end{align}
For such systems, Conditions~\ref{hyp:state_feedback} (local) and~\ref{hyp:distinguish} are proved to be sufficient to achieve the dynamic output feedback stabilization, which implies, by Theorem~\ref{th:necessary_common}, that Condition~\ref{hyp:common} is also satisfied.
In this paper, we therefore focus on systems that are not in the form of \eqref{E:diss}-\eqref{eq:diss}.
\end{rmrk}

\section{An illustrative example}\label{sec:finite}

\subsection{An obstruction by J.-M. Coron}\label{sec:coron}

Consider the case where \eqref{E:system_general} is single-input single-output and $f$ is a linear map, so that it can be written in the form of
\begin{equation}\label{E:system_linear}
\left\{
\begin{aligned}
&\dot{x}=Ax + bu,
\\
&y=h(x).
\end{aligned}
\right.
\end{equation}
where $A\in\R^{n\times n}$ and $b\in\R^{n\times 1}$ and $h:\R^n\to\R$.
If $h$ is nonlinear and is not an invertible transformation of a linear map, then the usual theory of linear systems fails to be applied.
Condition~\ref{hyp:state_feedback} reduces to the
stabilizability
of the pair $(A, b)$. If it holds, then \eqref{E:system_linear} is globally stabilizable by a linear static state feedback.

In \cite{Coron1994}, J.-M. Coron introduced the following illustrative one-dimensional example:
\begin{equation}\label{E:system_coron}
    \dot x = u,\qquad y = x^2.
\end{equation}
He proved that \eqref{E:system_coron} is not locally stabilizable by means of a dynamic output feedback, unless introducing a time-dependent component in the feedback law.
The difficulty with this system comes from the unobservability of the target point $0$. Indeed, \eqref{E:system_coron} is not observable for the constant input $u\equiv0$ in any time $T>0$. Indeed, the initial conditions $x_0$, $-x_0$ $\in\R$ are indistinguishable.
In particular, the system is not uniformly observable, and consequently the results of \cite{jouan, TeelPraly1994, TeelPraly1995} fail to be applied. To overcome this issue, \cite{Coron1994} introduced time-dependent output feedback laws, and proved by this means the local stabilizability of \eqref{E:system_coron}.
This system can also be stabilized by means of ``dead-beat'' or ``sample-and-hold'' techniques (see \cite{nevsic1998input}, \cite{ShimTeel2002}, respectively).

A generalization of \eqref{E:system_coron} in higher dimension is
\begin{equation}\label{E:system_rot}
\left\{
\begin{aligned}
&\dot{x}=Ax + bu,
\\
&y=h(x)%
\end{aligned}
\right.
\end{equation}
for a skew-symmetric matrix $A$ and $h$ radially symmetric\footnote{
Up to a change of scalar product, one may also consider the case where $PA+AP=0$ for some positive definite matrix $P\in\R^{n\times n
}$ and $h$ such that $(x_1'Px_1 = x_2'Px_2) \Rightarrow (h(x_1) = h(x_2))$.
}.
Again, the constant input $u\equiv0$ makes the system unobservable in any time $T>0$ since for any initial conditions $x_0$, $\tilde{x}_0$ in $\R^n$ satisfying $|x_0|=|\tilde{x}_0|$, $h(\varphi_t(x_0))=h(\tilde{x}_0)=h(x_0)=h(\varphi_t(x_0))$ for all $t\in\R_+$.
Condition~\ref{hyp:state_feedback}~(global) reduces to the stabilizability of $(A, b)$ and Condition~\ref{hyp:distinguish}~(global) is always satisfied.
Let us state a necessary condition for the stabilizability of \eqref{E:system_rot} by means of a dynamic output feedback.

\begin{thrm}\label{th:impossible}
If \eqref{E:system_rot} is locally stabilizable by means of a dynamic output feedback, then $A$ is invertible.
\end{thrm}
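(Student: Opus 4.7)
My plan is to argue by contradiction: assume $A$ is not invertible, so that $V := \ker A \neq \{0\}$. Since $A$ is skew-symmetric, $V^\perp = \operatorname{Im}(A)$, and both $V$ and $V^\perp$ are $A$-invariant with $A|_V = 0$. Decompose the control direction accordingly as $b = b_V + b_{V^\perp}$ with $b_V \in V$ and $b_{V^\perp} \in V^\perp$. Assume also that the closed-loop system~\eqref{E:system_stab} is locally asymptotically stable at $(0,0)$, so that every $(x_0, w_0)$ in some neighborhood of $(0,0)$ yields a trajectory converging to $(0,0)$.

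The central observation is a conservation law: for any subspace $W$ with $W \subseteq V$ and $W \perp b_V$, the projection $P_W x(t)$ is constant along every closed-loop trajectory. Indeed, $\frac{d}{dt} P_W x = P_W(Ax) + P_W(b)u$, where $P_W(Ax) = 0$ because $Ax \in \operatorname{Im}(A) = V^\perp$ is orthogonal to $W \subseteq V$, and $P_W b = P_W b_V + P_W b_{V^\perp} = 0$ using $W \perp b_V$ and $b_{V^\perp} \in V^\perp \perp W$. If either $b_V = 0$ (in which case take $W = V$), or $b_V \neq 0$ with $\dim V \geq 2$ (in which case take $W = V \cap b_V^\perp$, of dimension $\dim V - 1 \geq 1$), then $W$ is non-trivial. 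I can then pick $x_0$ arbitrarily close to $0$ with $P_W x_0 \neq 0$; starting the closed-loop system from $(x_0, 0)$, the trajectory satisfies $|x(t)| \geq |P_W x_0| > 0$ for all $t$, contradicting convergence to $(0,0)$.

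The only remaining case is $\dim V = 1$ and $b_V \neq 0$, which forces $n$ to be odd and contains Coron's archetypal example $\dot x = u$, $y = x^2$ as the prototype ($n = 1$, $A = 0$). Here the admissible subspace $W$ is trivial, so the conservation argument collapses. In this situation I would invoke (or adapt) the topological obstruction of~\cite{Coron1994}: letting $v$ span $V$ and $\xi = x \cdot v/|v|^2$, the output $y = g(|x|^2)$ is invariant under the $\Z/2$ action $\xi \mapsto -\xi$, hence the observer is structurally blind to the sign of $\xi$, and a degree-theoretic argument on the closed-loop time-$T$ map rules out the existence of a continuous time-independent dynamic output feedback achieving local asymptotic stability. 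The main obstacle is precisely this last case: the simple invariance of $P_W x$ no longer yields a non-trivial conserved quantity, and one must resort to a genuinely topological argument in the spirit of Coron, whereas the first two cases are straightforward once the correct invariant subspace has been identified.
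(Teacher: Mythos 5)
Your conservation-law argument is correct as far as it goes, but it only disposes of the cases in which the pair $(A,b)$ fails to be stabilizable: if $W\subseteq\ker A$ is nontrivial and $W\perp b$, then $P_Wx$ is conserved under \emph{every} input, so the system is not even state-feedback stabilizable. The substance of Theorem~\ref{th:impossible} lies precisely in the case you defer, $\dim\ker A=1$ with $b$ having a nonzero component in $\ker A$ (this is Coron's example $\dot x=u$, $y=x^2$ itself), and there you do not give a proof: ``invoke (or adapt) the topological obstruction of~\cite{Coron1994}'' and ``a degree-theoretic argument on the closed-loop time-$T$ map rules out the existence'' name the kind of argument needed without carrying it out. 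Identifying the $\Z/2$ symmetry $\xi\mapsto-\xi$ is the right starting point, but you still have to (a) quote a precise topological fact about asymptotically stable equilibria and (b) show how the symmetry contradicts it; neither step is present, and the time-$T$-map formulation you gesture at is not the easiest vehicle. This is a genuine gap.

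For comparison, the paper's proof needs no case distinction at all. It takes any unit vector $v\in\ker A$, sets $\Sigma=\Id_{\R^n}-2vv'$ (reflection through $v^\perp$), and observes that $A\Sigma=A$, $h\circ\Sigma=h$ (radial symmetry), and $\det\Sigma=-1$. Hence the closed-loop vector field $F$ on $\R^n\times\R^q$ satisfies $F(\Sigma x,\wdyn)=F(x,\wdyn)$, so its index at the origin equals $\det\Sigma=-1$ times itself; on the other hand, by Krasnosel'ski\u{\i}'s theorem \cite[Theorem 52.1]{krasnosel1984geometrical} the index of $-F$ at an asymptotically stable equilibrium is $1$, giving $1=-1$. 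This single argument covers all three of your cases, including the two you handled by conservation laws. To repair your proof you would have to supply exactly this (or an equivalent) index computation in the remaining case, at which point the preceding case analysis becomes superfluous.
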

\begin{proof}
The proof is an adaptation of the one given in \cite{Coron1994} in the one-dimensional context.
Assume that
$(0, 0)$ is a locally asymptotically stable equilibrium point of
\begin{equation}\label{E:system_rot_stab}
\left\{
\begin{aligned}
&\dot{x}=\rot x+  bu,
\\
&y= h(x)
\end{aligned}
\right.
,\qquad
\left\{
\begin{aligned}
&\dot{\wdyn}=\nu(\wdyn, u, y)
\\
&u= \varpi(\wdyn, y)
\end{aligned}
\right.
\end{equation}
for some positive integer $q$ and two continuous maps $\nu:\R^q\times\R\times\R$ and $\varpi:\R^q\times\R$.
Set $F:\R^n\times\R^q\ni(x, \wdyn)\mapsto\left(\rot x+ b\varpi\left(\wdyn, h(x)\right), \nu\left(\wdyn, \varpi\left(\wdyn, h(x)\right), h(x)\right)\right)$.
Then, according to \cite[Theorem 52.1]{krasnosel1984geometrical}
(see~\cite{coron1994relations} when one does not have uniqueness of the solutions to the Cauchy problem),
the index of $-F$ at $(0, 0)$ is $1$.
Assume, for the sake of contradiction, that $A$ is not invertible.
Let $\NN$ be a one-dimensional subspace of $\ker A$.
Denote by $\Sigma$ the reflection through the hyperplane $\NN^\perp$,
that is, $\Sigma = \Id_{\R^n} - 2vv'$ for some unitary vector $v\in\NN$.
Then $\det \Sigma = -1$, $A\Sigma=A$ and $h(\Sigma x)=h(x)$.
Hence $(x, \wdyn)\mapsto -F(\Sigma x, \wdyn)$ has index $-1$ at $(0, 0)$ and $F(\Sigma x, \wdyn) = F(x, \wdyn)$. Thus $1=-1$ which is a contradiction.
\end{proof}

According to the spectral theorem, we have the following immediate corollary. If $n=1$, we recover the result of J.-M. Coron in \cite{Coron1994}.

\begin{crllr}\label{cor:impossible}
If $n$ is odd and $A$ is skew-symmetric, then \eqref{E:system_rot} is not locally stabilizable by means of a dynamic output feedback.
\end{crllr}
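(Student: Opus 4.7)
The plan is to reduce the corollary to Theorem~\ref{th:impossible}, which tells us that local dynamic output feedback stabilizability of \eqref{E:system_rot} forces $A$ to be invertible. So the task is simply to observe that a real skew-symmetric matrix of odd size is never invertible, whence stabilizability is ruled out.

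To justify non-invertibility, I would use either of two standard arguments. The quickest is the determinant computation: if $A' = -A$, then
\begin{equation*}
\det A = \det A' = \det(-A) = (-1)^n \det A,
\end{equation*}
so for $n$ odd we get $\det A = -\det A$, hence $\det A = 0$. Alternatively, and this is the route hinted at by the statement ``by the spectral theorem,'' one notes that a real skew-symmetric matrix is normal, its complex eigenvalues are purely imaginary, and the non-zero ones occur in complex-conjugate pairs $\pm i\lambda$ with $\lambda > 0$; if $n$ is odd, the total count of eigenvalues (with multiplicity) is odd, so at least one eigenvalue must be zero, and $A$ is singular.

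Either way, $A$ fails to be invertible, which by the contrapositive of Theorem~\ref{th:impossible} rules out local stabilizability of \eqref{E:system_rot} by means of a dynamic output feedback. There is no real obstacle here; the corollary is a one-line consequence of the previous theorem combined with an elementary linear algebra fact, and the only choice to make is which of the two justifications above to cite.
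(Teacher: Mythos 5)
Your proposal is correct and matches the paper's argument: the corollary is obtained as the contrapositive of Theorem~\ref{th:impossible} once one notes that a real skew-symmetric matrix of odd size is singular (the paper invokes the spectral theorem for this, which is your second justification; the determinant computation is an equally valid alternative). Nothing is missing.
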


\subsection{Converse theorem: a positive result of output feedback stabilization}\label{sec:converse}

One of the main results of this paper is the following theorem which is the converse of Theorem~\ref{th:impossible} in the case where $h(x) = \frac{1}{2}|x|^2$. The proof relies on the guidelines described in the introduction, that is, an embedding into a bilinear system, an observer design with dissipative error-system and a feedback perturbation.

Consider the special case for system~\eqref{E:system_rot}:
\begin{equation}\label{E:system_rot2}
\left\{
\begin{aligned}
&\dot{x}=Ax + bu,
\\
&y=h(x)=\frac{1}{2}|x|^2.
\end{aligned}
\right.
\tag{\ref{E:system_rot}'}
\end{equation}

\begin{thrm}\label{th:finite}
If $A$ is skew-symmetric and invertible and $(A, b)$ is stabilizable, then \eqref{E:system_rot2} is semi-globally stabilizable by means of a dynamic output feedback.
\end{thrm}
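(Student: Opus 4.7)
The plan implements the three-fold strategy described in the introduction: \emph{embedding} into a state-affine system with linear output, design of a \emph{dissipative observer} on this embedding, and \emph{perturbation} of a state-feedback stabilizer.

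\textbf{Embedding.} Exploit skew-symmetry of $A$: introducing $\etat := \tfrac12|x|^2$, we have $\dot \etat = x'Ax + (b'x)u = (b'x)u$. The augmented state $\Xi = (x,\etat)\in\R^{n+1}$ then satisfies a state-affine system
\begin{equation*}
\dot \Xi = \mathcal{A}(u)\Xi + \mathcal{B}(u), \qquad y = \mathcal{C}\Xi,
\end{equation*}
with $\mathcal{A}(u) = \begin{pmatrix} A & 0 \\ ub' & 0 \end{pmatrix}$, $\mathcal{B}(u) = \begin{pmatrix} bu \\ 0 \end{pmatrix}$, $\mathcal{C} = \begin{pmatrix} 0 & 1 \end{pmatrix}$, and the output is now linear in $\Xi$. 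On this representation I design a Kalman-type observer
\begin{equation*}
\dot{\hat\Xi} = \mathcal{A}(u)\hat\Xi + \mathcal{B}(u) + S^{-1}\mathcal{C}'(y - \mathcal{C}\hat\Xi), \qquad \dot S = -\theta S - \mathcal{A}(u)'S - S\mathcal{A}(u) + \mathcal{C}'\mathcal{C},
\end{equation*}
for $\theta>0$ and $S(0)\succ 0$. A direct computation shows that the error $e := \Xi - \hat\Xi$ satisfies $\tfrac{d}{dt}(e'Se) = -\theta\, e'Se - |\mathcal{C}e|^2$, so the Lyapunov function $e'Se$ decays exponentially with rate $\theta$ \emph{provided} $S$ remains bounded above and below by positive multiples of the identity.

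\textbf{Perturbed feedback and closed-loop analysis.} Stabilizability of $(A,b)$ provides $K$ with $A+bK$ Hurwitz; I close the loop with $u = K\xhat + v(\hat\Xi,y)$, where $v$ is a continuous perturbation vanishing at the origin. Its role is to enforce persistence of excitation of $u(\cdot)$ along closed-loop trajectories (so that the Riccati flow produces a uniformly well-conditioned $S$) while remaining dominated by the nominal feedback $K\xhat$. A natural ansatz is $v = \delta\, w(\hat\Xi, y)$ with small $\delta>0$ and $w$ designed so that its magnitude is tied to the current estimated distance to the target in a way that prevents $u$ from vanishing faster than the observer error. For a fixed compact $\Kx\subset\R^n$, tune $\theta$ large and $\delta$ small (as functions of $\Kx$), and build a composite Lyapunov function of the form $W = \xhat' P \xhat + \mu\, e'Se$ for a suitable $P\succ 0$ and $\mu>0$. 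Using the exponential contraction of $e'Se$ together with a LaSalle/cascade-type argument yields convergence of $(x,\etat,\hat\Xi)$ to zero from any initial condition in the prescribed compact set.

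\textbf{Main obstacle.} The crux of the proof is the design of the perturbation $v$. It must simultaneously vanish at $(0,0)$ so the origin remains an equilibrium of the closed-loop, generate enough excitation to keep $S$ in a compact subset of positive definite matrices along trajectories approaching the target, and be small enough not to spoil the stabilizing action of $K$. The non-singularity of $A$ enters precisely here, circumventing the topological obstruction of Theorem~\ref{th:impossible} and allowing $v$ to break the reflection symmetry of $h$. Calibrating $\theta$ and $\delta$ uniformly on the compact set $\Kx$ is what ultimately promotes the local result to the semi-global conclusion claimed by the theorem.
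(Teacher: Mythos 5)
Your embedding step is exactly the paper's: augmenting the state with $\tfrac12|x|^2$ to obtain the state-affine system \eqref{E:system_plonge_finie} with linear output. After that, however, the proposal has two genuine gaps. First, the observer. You propose a Kalman--Riccati observer whose convergence hinges on $S$ staying uniformly well-conditioned, which you propose to guarantee by ``enforcing persistence of excitation of $u$.'' This is incompatible with the goal: for the origin to be an equilibrium of the closed loop, the feedback must vanish there, so along any converging trajectory $u\to 0$; but for $u=0$ the pair $(\mC,\mA(0))$ is unobservable (the last coordinate decouples), so the observability Gramian degenerates and $S$ loses its lower bound exactly in the limit you care about. The paper avoids this trap entirely by using a \emph{constant-gain} Luenberger observer with gain $\La(u)=(bu,\alpha)$ chosen so that $\mA(u)-\La(u)\mC = (\text{skew-symmetric}) - \alpha\mC'\mC$; then $|\varepsilon|$ is non-increasing \emph{unconditionally} (no excitation hypothesis), and convergence of $\varepsilon$ is recovered a posteriori by LaSalle's invariance principle combined with an observability analysis along the specific closed-loop trajectories (Lemma~\ref{lem:observability}).

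Second, and more seriously, the perturbation $v$ --- which you yourself identify as ``the crux of the proof'' --- is never actually constructed; you leave it as an ansatz with a list of desiderata. The paper's choice is explicit and its precise form matters: $\phi_\delta(x)=Kx+\tfrac{\delta}{2}|x|^2$, i.e.\ $\lambda_\delta(z)=Kz_{1:n}+\delta z_{n+1}$ in the embedded coordinates, so the perturbation is a \emph{linear} function of the augmented state and hence implementable from the observer. The entire observability argument (Lemma~\ref{lem:observability}) consists of showing that if $\mC\omega$ has all derivatives zero at $t=0$ along a closed-loop trajectory, then a certain $(n+2)\times(n+2)$ matrix $\mQ$ annihilates $(\zhat(0),\eps_{n+1}(0))$, and $\det\mQ=\delta^2\alpha\,\Delta\,P(-\alpha)\neq 0$ --- this is where both $\delta\neq 0$ and the invertibility of $A$ (via the observability of $(KA,A)$ and the positivity of the characteristic polynomial on $\R$) are actually used. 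Your remark that ``the non-singularity of $A$ enters precisely here'' is the right intuition, but without the explicit perturbation and the determinant computation there is no proof: the mechanism by which the perturbation restores observability at the target is the theorem's real content, not a tuning detail.
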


\begin{rmrk}
The dynamic output feedback is explicitly given in \eqref{E:system_sep_rot}. It is easily implementable, and does not use time-dependent feedback laws.
\end{rmrk}

The proof of Theorem~\ref{th:finite} is the object of the section.
We follow the same steps as in \cite{ludo}, with a very similar embedding strategy.
The main difference is the observability analysis developped in Section~\ref{sec:obs_finie}:
here the target is unobservable,
while in \cite{ludo} it was observable.

\subsubsection{Embedding into a bilinear system of higher dimension}\label{sec:embedding-finite}
Consider the map
\fonction{\plong}{\R^n}{\R^{n+1}}{x}{\left(x, \frac{1}{2}\abs{x}^2\right).}
If $x$ is a solution of \eqref{E:system_rot},
then
$\frac{1}{2}\frac{\diff}{\diff t}\abs{x}^2 = x'Ax + x'bu = x'bu$ since $A$ is skew-symmetric.
Hence $\etat = \plong(x)$ defines an embedding of \eqref{E:system_rot} into
\begin{equation}\label{E:system_plonge_finie}
\left\{
\begin{aligned}
&\dot{\etat}=\mA(u)\etat + \mB u
\\
&y=\mC\etat.
\end{aligned}
\right.
\end{equation}
where
$\mA(u) =
\begin{pmatrix}
A    & 0\\
ub' & 0
\end{pmatrix}$,
$\mB =
\begin{pmatrix}
b\\
0
\end{pmatrix}$ and
$\mC = \begin{pmatrix}
0&\cdots&0&1
\end{pmatrix}$
and with initial conditions in $\TT=\plong(\R^n)$.
Moreover, the semi-trajectory $\etat$ remains in $\TT$.
We denote by $\inv:\R^{n+1}\to\R^n$ the projection operator given by 
$\etat =(\etat_1,\dots,\etat_{n+1}) \mapsto (\etat_1,\dots,\etat_{n})$.
Note that $\inv$ is a left-inverse of $\plong$:
\begin{equation}\label{def:uppi}
    \inv(\plong(x)) = x,\qquad \forall x\in\R^n.
\end{equation}
In the following, to ease notations, we often use the shorthand $\ubar{\etat}$ for $\inv(\etat)$.

\subsubsection{Observer design with dissipative error system}

Let us introduce a Luenberger observer with dynamic gain for \eqref{E:system_plonge_finie}.
In order to make the error system  dissipative,
set
$
\La(u)=\begin{pmatrix}
bu\\
\alpha
\end{pmatrix}\in\R^{n+1}
$ for some positive constant $\alpha$ to be fixed later.
The corresponding observer system is given by

\begin{equation}\label{E:system_observer_open}
\left\{
\begin{aligned}
&\dot{\varepsilon}= \left(\mA(u) -\La(u)\mC\right)\varepsilon
\\
&\dot{\zhat}= \mA(u) \zhat+  \mB u -\La(u)\mC\varepsilon
\end{aligned}
\right.
\end{equation}
where $\etat = \etath - \eps$ satisfies \eqref{E:system_plonge_finie}, $\etath$ is the estimation of the state made by the observer system and $\eps$ is the error between the estimation of the state and the actual state of the system.
Note that for all $u\in\R$,
\begin{equation}\label{E:dissipative}
\mA(u) - \La(u)\mC = 
\begin{pmatrix}
\rot & -bu\\
ub' & -\alpha
\end{pmatrix}
=
\begin{pmatrix}
\rot & -b u\\
ub' & 0
\end{pmatrix}
- \alpha \mC'\mC.
\end{equation}
It implies that the $\eps$-subsystem of \eqref{E:system_observer_open} is dissipative, that is,
for all input $u\in C^0(\R_+, \R)$, the solutions of \eqref{E:system_observer_open} satisfy
\begin{equation}\label{E:eps_decroit}
    \frac{\diff |\varepsilon|^2}{\diff t}
    = 2\varepsilon'\left(\mA(u) - \La(u)\mC\right)\varepsilon
    = -2\alpha |\mC\varepsilon|^2
    \leq 0.
\end{equation}
This is the first key fact of the strategy applied below.

\subsubsection{Feedback perturbation and closed-loop system}
\label{sec_FeedbackPert}

Because $(\rot, b)$ is stabilizable, there exists $K\in\R^{1\times n}$ such that $\rot+bK$ is Hurwitz (in particular, $(K, \rot)$ is detectable).
Since $\rot$ is skew-symmetric, its eigenvalues are purely imaginary. Hence, the Hautus lemmas for stabilizability (resp. detectability) and controllability (resp. observability) are equivalent.
Therefore, $(\rot, b)$ is controllable and $(K, \rot)$ is observable.

With a separation principle in mind, a natural strategy for dynamic output feedback stabilization of \eqref{E:system_rot} would be to combine the Luenberger observer~\eqref{E:system_observer_open} with the state feedback law $\phi:x\mapsto Kx$.
However, it appears that this strategy fails to be applied due to the unobservability at the target.
To overcome this difficulty, we rather consider a perturbed feedback law $\phi_\delta:x\mapsto Kx + \frac{\delta}{2}\abs{x}^2$ for some positive constant $\delta$ to be fixed later.
This is the second key fact of the strategy.
For all $\delta>0$, denote by $\Dd$ the basin of attraction of $0\in\R^n$ of the vector field $\R^n\ni x\mapsto Ax + b\phi_\delta(x)$.
Since the linearization of this vector field at $0$ is $x\mapsto (A+bK)x$, it is locally asymptotically stable at $0$ for all $\delta>0$.
As stated in the following lemma, the drawback of this perturbation is to pass from a globally stabilizing state feedback to a semi-globally stabilizing one.
\begin{lmm}\label{lem:delta}
For any compact set $\Kx\subset\R^n$, there exists $\delta_0>0$ such that for all $\delta\in(0, \delta_0)$, $\Kx\subset\Dd$.

\end{lmm}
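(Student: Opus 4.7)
The plan is to run a quadratic Lyapunov analysis on the closed-loop vector field
\[
\R^n\ni x \,\longmapsto\, (A+bK)x + \tfrac{\delta}{2}|x|^2\,b,
\]
exploiting that the cubic perturbation introduced by the quadratic term in $\phi_\delta$ is negligible on any fixed compact set once $\delta$ is small enough.

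First, since $A+bK$ is Hurwitz, I would pick a positive definite $P\in\R^{n\times n}$ solving the Lyapunov equation $(A+bK)'P + P(A+bK) = -\Id$, and set $V(x) = x'Px$. A direct computation gives
\[
\dot V \,=\, -|x|^2 + \delta\,|x|^2\,x'Pb \,=\, |x|^2\bigl(-1 + \delta\,x'Pb\bigr).
\]
The nominal dissipation is quadratic while the perturbation is cubic, which on the one hand means that on any fixed bounded set the perturbation is dominated for $\delta$ small, but on the other hand foreshadows that the basin has to shrink as $\delta$ grows, consistent with a merely semi-global conclusion.

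Given the compact set $\Kx$, I would then choose $c > 0$ such that $\Kx \subset \Omega_c := \{x\in\R^n \mid V(x)\leq c\}$; the explicit choice $c = \lambda_{\max}(P)\,\max_{x\in\Kx}|x|^2$ works. The sublevel set $\Omega_c$ is compact and contained in the Euclidean ball of radius $r := \sqrt{c/\lambda_{\min}(P)}$. Setting $\delta_0 := 1/(r\,|Pb|)$ (with the convention $\delta_0 := +\infty$ if $Pb = 0$), every $\delta \in (0,\delta_0)$ yields
\[
\dot V(x) \,\leq\, |x|^2\bigl(-1+\delta\,r\,|Pb|\bigr) \,<\, 0 \quad\text{on } \Omega_c\setminus\{0\}.
\]
Hence $\Omega_c$ is forward-invariant for the closed-loop dynamics and a standard Lyapunov argument yields that every trajectory starting in $\Omega_c$ converges to the origin, so $\Kx \subset \Omega_c \subset \Dd$, as desired.

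There is no real obstacle in the argument; the only point to track is that $\delta_0$ scales like $1/r$ with the size of $\Kx$, which is precisely why the perturbation $\phi_\delta$ can only produce a semi-global (and not global) basin of attraction, even though the unperturbed feedback $\phi_0(x)=Kx$ is globally stabilizing.
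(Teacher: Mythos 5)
Your proof is correct and follows essentially the same route as the paper's: a quadratic Lyapunov function $V(x)=x'Px$ for the Hurwitz matrix $A+bK$, with the cubic perturbation $\delta|x|^2x'Pb$ dominated on a bounded set by taking $\delta_0$ inversely proportional to the radius. If anything, your version is slightly more careful, since you pass to the forward-invariant sublevel set $\Omega_c\supset\Kx$ before concluding, whereas the paper states the dissipation inequality only on $\Kx$ (which need not be invariant) and leaves that step implicit.
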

\begin{proof}
Let $\rho>0$ be such that $\Kx\subset B_{\R^n}(0, \rho)$.
Since $A+bK$ is Hurwitz, there exists $P\in\R^{n\times n}$ positive definite such that 
$P(A+bK) + (A+bK)'P < -2 \Id_{\R^n}$
(recall that $'$ denotes the transpose operation).
Set $V:\R^n\ni x\mapsto x'Px$.
Then, for all $x\in\Kx$,
\begin{align*}
    \frac{\partial V}{\partial x}(x)(Ax+b\phi_\delta(x))
    &=2x'P(A+bK)x + \delta |x|^2 x'Pb \\
    &\leq (-2+ \delta |x||Pb|) |x|^2\\
    &\leq (-2+ \delta \rho |Pb|) |x|^2.
\end{align*}
Set $\delta_0 = \frac{1}{\rho |Pb|}$ and let $\delta\in(0, \delta_0)$.
Then $V$ is positive definite and 
\begin{align*}
    \frac{\partial V}{\partial x}(x)(Ax+b\phi_\delta(x))
    < -|x|^2
\end{align*}
for all $x\in\Kx$.
Hence, $0\in\R^n$ is a locally asymptotically (even exponentially) stable equilibrium point of the vector field $\R^n\ni x\mapsto Ax + b\phi_\delta(x)$ with basin of attraction containing $\Kx$.
\end{proof}

Hence, for all compact set $\Kx\subset\R^n$ there exists $\delta_0>0$ such that if $\delta\in(0, \delta_0)$, then $\Kx\subset\Dd$.
On system~\eqref{E:system_plonge_finie}, we choose the feedback law
\begin{equation}
\lambda_\delta(z)=
\begin{pmatrix}
K& \delta
\end{pmatrix}z,
\end{equation}
which satisfies $\phi_\delta=\lambda_\delta\circ\plong$.
The corresponding closed-loop system is given by
\begin{equation}\label{E:system_observer_closed}
\left\{
\begin{aligned}
&\dot{\varepsilon}= \left(\mA(\lambda_\delta(\zhat)) -\La(\lambda_\delta(\zhat))\mC\right)\varepsilon,
\\
&\dot{\zhat}= \mA(\lambda_\delta(\zhat)) \zhat+  \mB \lambda_\delta(\zhat) -\La(\lambda_\delta(\zhat))\mC\varepsilon.
\end{aligned}
\right.
\end{equation}
We are now able to exhibit a coupled system in the form of~\eqref{E:system_stab} (with $w=\etath$) with which we intend to prove semi-global dynamic output feedback stabilization of~\eqref{E:system_rot}:
\begin{equation}\label{E:system_sep_rot}
\left\{
\begin{aligned}
&\dot{x}=\rot x+  bu,
\\
&y= \frac{1}{2}\abs{x}^2
\end{aligned}
\right.
,\qquad
\left\{
\begin{aligned}
&\dot{\zhat}=\mA(u) \zhat+  \mB u -\La(u)\left(\mC\zhat-y\right)
\\
&u= \lambda_\delta(\zhat).
\end{aligned}
\right.
\end{equation}

It is now sufficient to prove the following theorem, which implies Theorem~\ref{th:finite}, in the next sections.
\begin{thrm}\label{th:explicit}
For all compact set $\Kx\times\Kw\subset\R^n\times\R^{n+1}$, there exist $\delta_0>0$ and $\alpha_0>0$ such that
for all $\delta\in(0, \delta_0)$
and all $\alpha\in(\alpha_0, +\infty)$,
$(0, 0)\in\R^n\times\R^{n+1}$ is a locally asymptotically stable equilibrium point of \eqref{E:system_sep_rot} with basin of attraction containing $\Kx\times\Kw$.
\end{thrm}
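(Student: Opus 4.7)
The strategy is to pass to coordinates $(x,\varepsilon)$ where $\varepsilon=\zhat-\plong(x)\in\R^{n+1}$ is the estimation error on the embedded state, and exploit the dissipation identity \eqref{E:eps_decroit} together with a LaSalle--Barbalat argument adapted to the algebraic structure of the problem. Denote by $\ubar{\varepsilon}=\inv(\varepsilon)\in\R^n$ the first $n$ components of $\varepsilon$ and by $\varepsilon_{n+1}\in\R$ its last component. A direct computation shows that the closed loop reads
\begin{equation*}
\dot x = Ax + b\phi_\delta(x) + b\bigl(K\ubar{\varepsilon}+\delta\varepsilon_{n+1}\bigr),\qquad \dot\varepsilon = \bigl(\mA(u)-\La(u)\mC\bigr)\varepsilon,
\end{equation*}
with $u=\phi_\delta(x)+K\ubar{\varepsilon}+\delta\varepsilon_{n+1}$, so that \eqref{E:eps_decroit} still holds along closed-loop trajectories.

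First I would establish forward boundedness. Apply Lemma~\ref{lem:delta} to a compact enlargement of $\Kx$ large enough to absorb the $\varepsilon$-induced disturbance, producing a threshold $\delta_0>0$, and fix any $\delta\in(0,\delta_0)$. Since $|\varepsilon(t)|\le|\varepsilon(0)|$ by \eqref{E:eps_decroit} and $(x_0,\zhat_0)\in\Kx\times\Kw$ bounds $|\varepsilon(0)|$, the disturbance $b(K\ubar\varepsilon+\delta\varepsilon_{n+1})$ is uniformly bounded in terms of $\Kx\times\Kw$. Using the quadratic Lyapunov function $V(x)=x'Px$ from the proof of Lemma~\ref{lem:delta} together with Young's inequality, one checks that sufficiently large sublevel sets of $V$ are forward invariant, so that $x(t)$ stays in $\Dd$ and $(x,\varepsilon)$ remains in a compact set for all $t\ge0$.

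Convergence then follows from a LaSalle--Barbalat argument. Integrating \eqref{E:eps_decroit} gives $\varepsilon_{n+1}\in L^2(\R_+)$; boundedness of trajectories yields boundedness of $\dot\varepsilon_{n+1}$, so Barbalat's lemma implies $\varepsilon_{n+1}(t)\to 0$. Along any $\omega$-limit trajectory $\varepsilon_{n+1}\equiv 0$, the error equation collapses to $\dot{\ubar\varepsilon}=A\ubar\varepsilon$, and the identity $\dot\varepsilon_{n+1}=u\,b'\ubar\varepsilon-\alpha\varepsilon_{n+1}$ reduces to the algebraic constraint $u(t)\,b'\ubar\varepsilon(t)\equiv 0$. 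Two cases arise. If $\ubar\varepsilon\equiv 0$ on the limit trajectory, then $\varepsilon\equiv 0$ and the $x$-equation becomes $\dot x=Ax+b\phi_\delta(x)$, which converges to $0$ by Lemma~\ref{lem:delta}. Otherwise $\ubar\varepsilon(t)=e^{tA}\ubar\varepsilon(0)\neq 0$; controllability of $(A,b)$---equivalent to observability of $(b',A)$---makes $b'e^{tA}\ubar\varepsilon(0)$ a non-trivial real-analytic function, forcing $u\equiv 0$. Then $\dot x=Ax$, $|x|$ is constant, and $u\equiv 0$ rewrites as $Ke^{tA}\bigl(x_0+\ubar\varepsilon(0)\bigr)\equiv -\delta|x_0|^2/2$. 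Differentiating in $t$ and using that $(KA,A)$ is observable (a consequence of the observability of $(K,A)$ together with the invertibility of $A$), one obtains $x_0+\ubar\varepsilon(0)=0$, and plugging back gives $|x_0|^2=0$, contradicting $\ubar\varepsilon\not\equiv 0$. Hence the $\omega$-limit set reduces to $\{(0,0)\}$ and $(x(t),\zhat(t))\to(0,0)$.

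Lyapunov stability at $(0,0)$ is then a consequence of the non-expansivity $|\varepsilon(t)|\le|\varepsilon(0)|$ combined with the robustness of the locally exponentially stable equilibrium $x=0$ of $\dot x=Ax+b\phi_\delta(x)$ (its linearization is the Hurwitz matrix $A+bK$) under a disturbance of size $O(|\varepsilon(0)|)$: standard continuity estimates show that $(x_0,\zhat_0)$ close to $(0,0)$ produces a trajectory that stays arbitrarily close. The main obstacle is the invariance analysis of the $\omega$-limit set: ruling out the non-trivial case requires all four structural ingredients---skew-symmetry of $A$ (so that $\dot{\ubar\varepsilon}=A\ubar\varepsilon$ and $|x|$ stays constant when $u\equiv 0$), controllability of $(A,b)$, invertibility of $A$, and the perturbation $\delta|x|^2/2$ playing the role of a quadratic excitation---and removing any one of them breaks the exclusion argument.
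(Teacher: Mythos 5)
Your proof is correct in substance and reaches the conclusion by a genuinely different route from the paper in both of its key technical steps. For boundedness, the paper (Lemma~\ref{lem:bound}) controls the observer component $\ubar{\zhat}$ through the Lyapunov function of the nominal closed loop, and this is precisely where the requirement $\alpha>\alpha_0$ enters: the correction term $-\La(u)\mC\eps$ in the $\zhat$-equation is tamed only by making $\eps_{n+1}$ decay fast. You instead bound the plant state $x$ directly, where the disturbance $b(K\ubar{\eps}+\delta\eps_{n+1})$ is controlled by the non-expansivity \eqref{E:eps_decroit} alone; this sidesteps the lower threshold on $\alpha$ and would in fact prove the statement for every $\alpha>0$. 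For attractivity, the paper isolates an observability lemma (Lemma~\ref{lem:observability}): it differentiates the output of the lifted error system at $t=0$, assembles the matrix $\mQ$, and computes its determinant in Appendix~\ref{app:det}. You replace this with a direct invariance analysis of the $\omega$-limit set: there $\eps_{n+1}\equiv0$, so $\dot{\ubar{\eps}}=A\ubar{\eps}$ and $u\,b'\ubar{\eps}\equiv0$; real-analyticity plus controllability of $(A,b)$ forces $u\equiv0$ in the nontrivial case, and observability of $(KA,A)$ together with the quadratic excitation $\tfrac{\delta}{2}|x|^2$ yields the contradiction. The algebraic ingredients are exactly those that make $\mQ$ invertible in the paper, but your packaging avoids the determinant computation and makes the role of each hypothesis visible.

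Two steps need tightening, though neither requires a new idea. First, the assertion that ``sufficiently large sublevel sets of $V$ are forward invariant'' is false as stated: since $\phi_\delta$ contains the destabilizing term $\tfrac{\delta}{2}|x|^2$, one gets $\frac{\diff}{\diff t}(x'Px)\leq(-2+\delta|Pb|\,|x|)|x|^2+2|Pb|\,|x|\,|d|$, so only sublevel sets whose boundary lies in the window $2|Pb|\sup|d|<|x|<2/(\delta|Pb|)$ are invariant; you must fix the sublevel set (large enough for $\Kx\times\Kw$) \emph{before} shrinking $\delta$ — your opening sentence has the right quantifier order, but the invariance claim should be restricted to that window. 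Second, in Case 1 of the LaSalle step, convergence of the limit trajectory to $0$ does not by itself give that the $\omega$-limit set $\Omega$ equals $\{(0,0)\}$; add that $|\eps|$ is constant on $\Omega$ (so $\eps\equiv0$ on all of $\Omega$ once one point has $\eps=0$) and that the only compact invariant subset of $\Dd$ for $\dot x=Ax+b\phi_\delta(x)$ is $\{0\}$, e.g.\ by evaluating the strict Lyapunov function at a maximizer over $\Omega$ and flowing backward.
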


\subsubsection{Boundedness of trajectories}

Since $\R^n\ni x\mapsto \frac{1}{2}|x|^2$ and $\phi_\delta$ are locally Lipschitz continuous functions, according to the Cauchy-Lipschitz theorem, for any initial condition $(x_0, \etath_0)\in\R^n\times\R^{n+1}$, there exists exactly one maximal solution $(x, \etath)$ of \eqref{E:system_sep_rot} such that $(x(0), \etath(0)) = (x_0, \etath_0)$.
Before going into the proof of Theorem~\ref{th:explicit}, we need to ensure the existence of global solutions.

\begin{lmm}\label{lem:bound}
For any compact set $\Kx\times\Kw\subset\R^n\times\R^{n+1}$,
there exist $\delta_0>0$ and $\alpha_0>0$ such that
for all $\delta\in(0, \delta_0)$
and all $\alpha\in(\alpha_0, +\infty)$,
\eqref{E:system_sep_rot} has a unique global solution $(x, \etath)$ for each initial condition $(x_0, \etath_0)\in\Kx\times\Kw$.
Moreover, $(x, \etath)$ is bounded and $\ubar{\etath}$ remains in a compact subset of $\Dd$.
\end{lmm}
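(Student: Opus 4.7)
The plan is to decouple the analysis by working with the observer error $\varepsilon = \etath - \plong(x)$ rather than $\etath$ itself. First I would apply Cauchy--Lipschitz to produce a unique maximal solution $(x, \etath)$ on some interval $[0, T)$ for any initial condition in $\Kx \times \Kw$. Since $\dot\varepsilon = (\mA(u) - \La(u)\mC)\varepsilon$ along any closed-loop trajectory, the dissipativity relation \eqref{E:eps_decroit} yields $t \mapsto |\varepsilon(t)|^2$ non-increasing, hence $|\varepsilon(t)| \leq |\varepsilon(0)| \leq \varepsilon_{\max}$ where $\varepsilon_{\max} = \sup\{|\etath_0 - \plong(x_0)| : (x_0, \etath_0) \in \Kx \times \Kw\}$ is finite by compactness and independent of $\delta, \alpha$.

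Next, because $\lambda_\delta$ is linear and $\lambda_\delta \circ \plong = \phi_\delta$, the $x$-dynamics rewrite as a perturbed state-feedback system $\dot x = Ax + b\phi_\delta(x) + b\lambda_\delta(\varepsilon)$ in which the disturbance is bounded in norm by $|b|(|K|+\delta)\varepsilon_{\max}$. Taking the positive definite $P$ provided in the proof of Lemma~\ref{lem:delta} (with $P(A+bK) + (A+bK)'P < -2\Id$), I would differentiate $V(x) = x'Px$ along trajectories and show that, on any ball $\bar B(0, R')$ and for $\delta < 1/(R'|Pb|)$, the computation of Lemma~\ref{lem:delta} combined with Cauchy--Schwarz gives
\[
\dot V \leq -|x|^2 + \Gamma|x|, \qquad \Gamma = 2|Pb|(|K|+\delta)\varepsilon_{\max}.
\]
Since $\Gamma$ is uniformly bounded for $\delta \leq 1$, a standard sublevel-set argument (applied to $\max(V(x(0)), \sup_{|x|\leq\Gamma}V(x))$) confines $x(t)$ to a fixed closed ball $\bar B(0, M)$ with $M$ depending only on $\Kx \times \Kw$ and the system data. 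Choosing $R' > M$ first, then $\delta_1 \leq \min(1, 1/(R'|Pb|))$, yields the desired uniform bound on $x$ on $[0, T)$.

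With both $\varepsilon$ and $x$ bounded on $[0, T)$, the identity $\etath = \plong(x) + \varepsilon$ forces $\etath$ to remain in a compact set, hence $T = +\infty$ by maximality. The same estimates give $|\inv(\etath(t))| \leq |x(t)| + |\varepsilon(t)| \leq R^*$ for some $R^* > 0$ depending only on $\Kx \times \Kw$. A second application of Lemma~\ref{lem:delta}, this time to the compact set $\bar B(0, R^*)$, supplies a threshold $\delta_2 > 0$ such that $\bar B(0, R^*) \subset \Dd$ for every $\delta \in (0, \delta_2)$. Setting $\delta_0 = \min(\delta_1, \delta_2)$ and choosing $\alpha_0 > 0$ arbitrarily then completes the proof.

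The main obstacle is that \eqref{E:eps_decroit} only damps the \emph{output} component $\mC\varepsilon$ of the error, not $\varepsilon$ as a whole, so the disturbance $b\lambda_\delta(\varepsilon)$ entering the $x$-subsystem does not vanish and must be treated as a persistent perturbation via an input-to-state-stability-style Lyapunov argument. The delicate point is the tension between the cubic term $\frac{\delta}{2}|x|^2$ inside $\phi_\delta$ (which forces $\delta$ to be small relative to $R'$) and the linear perturbation from $\varepsilon$ (which forces $R'$ to be large); one must fix the thresholds in the correct order --- first $\varepsilon_{\max}$, then a uniform bound on $\Gamma$ valid for $\delta \leq 1$, then $M$ and $R'$, and finally $\delta_0$ --- to avoid circular dependencies.
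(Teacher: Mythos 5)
Your proof is correct, but it takes a genuinely different route from the paper's. The paper analyzes the dynamics of the observer projection $\ubar{\etath}$ directly, i.e. $\dot{\ubar{\etath}} = A\ubar{\etath} + \lambda_\delta(\zhat)b - \lambda_\delta(\zhat)\eps_{n+1}b$, and must neutralize the correction term proportional to $\eps_{n+1}$; it does so by integrating $\dot\eps_{n+1} = -\alpha\eps_{n+1} + \lambda_\delta(\zhat)b'\ubar{\eps}$ explicitly and making $|\eps_{n+1}|$ small by taking $\alpha$ \emph{large} --- this is the sole source of the threshold $\alpha_0$ in the statement, and it comes with a first-exit-time argument on a sublevel set $D(r')$ of the Lyapunov function. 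You instead exploit the linearity of $\lambda_\delta$ to rewrite the true-state dynamics as $\dot x = Ax + b\phi_\delta(x) + b\lambda_\delta(\eps)$, where $\lambda_\delta(\eps)$ is merely a \emph{bounded} persistent disturbance (since $|\eps|$ is non-increasing by \eqref{E:eps_decroit}), absorb it by an ISS-style sublevel-set argument, and recover boundedness of $\ubar{\etath} = x + \ubar{\eps}$ afterwards. Your ordering of the constants ($\eps_{\max}$, then a $\delta$-uniform bound on $\Gamma$, then $M$ and $R'$, then $\delta_0$) correctly avoids the circularity you identify, and the two applications of Lemma~\ref{lem:delta} (one for the decay estimate on $\bar B(0,R')$, one to ensure $\bar B(0,R^*)\subset\Dd$) are both legitimate. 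The net effect is that your argument is shorter and proves a slightly stronger statement: no lower bound on $\alpha$ is needed, so $\alpha_0$ may indeed be arbitrary, whereas the paper's method genuinely requires $\alpha$ large. Nothing downstream is lost either way, since Lemma~\ref{lem:bound} is only invoked for the existence of a compact invariant set inside $\Dd$.
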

\begin{proof}
Let $(x_0, \etath_0)\in\Kx\times\Kw$ and $(x, \etath)$ be the corresponding maximal solution of \eqref{E:system_sep_rot}. Set $\etat=\plong(x)$ and $\eps=\etath-\etat$, so that $(\eps, \etath)$ is the maximal solution of \eqref{E:system_observer_closed} starting from $(\eps_0, \etath_0)$.
Then, it is sufficient to prove that $(\eps, \etath)$ is a global solution, $(\eps, \etath)$ is bounded and $\ubar{\etath}$ remains in a compact subset of $\Dd$.
According to \eqref{E:eps_decroit}, $\eps$ is bounded since $|\eps|$ is non-increasing.
Moreover, $\etath_{n+1} = \eps_{n+1} + \frac{1}{2}|\ubar{\etat}|^2 = \eps_{n+1} + \frac{1}{2}|\ubar{\etath}-\ubar{\eps}|^2$.
Then, it remains to show that
there exist $\delta_0>0$ and $\alpha_0>0$ such that for all $\delta\in(0, \delta_0)$
and all $\alpha\in(\alpha_0, +\infty)$,
for all initial conditions $(\eps_0, \etath_0)\in\Ke\times\Kw$, $\ubar{\etath}$ remains in a compact subset of $\Dd$.

Since $A+bK$ is Hurwitz, there exists $P\in\R^{n\times n}$ positive definite such that $P(A+bK) + (A+bK)'P < -2 \Id_{\R^n}$.
Then $V:\R^n\ni x\mapsto x'Px$ is a strict Lyapunov function for system~\eqref{E:system_rot} with feedback law $\phi$.
For all $r>0$, set $D(r)=\{x\in\R^n\mid V(x)\leq r\}$.
Let $\rho'>\rho>0$ and $r'>r>0$ be such that
$B_{\R^{n+1}}(0,\rho)$ contains $\Ke$ and $\Kw$
and $B_{\R^{n+1}}(0,\rho)\subset D(r)\subset D(r')\subset B_{\R^{n+1}}(0,\rho')$.
According to Lemma~\ref{lem:delta}, there exists $\delta_0>0$ such that for all $\delta\in(0, \delta_0)$, $\Dd$ contains the closure of $B_{\R^n}(0,\rho')$.
In the following, we show that there exists $\alpha_0>0$ such that, if $\alpha>\alpha_0$, then $\ubar{\etath}$ remains in $B_{\R^n}(0,\rho')$.
For all $\zhat, \eps$ in $\R^{n+1}$, define
\begin{align*}
\mu^1_\delta(\zhat)
&=\mA(\phi_\delta(\ubar{\etath}))\zhat+\mB\phi_\delta(\ubar{\zhat}),\\
\mu^2_\delta(\zhat)
&=(\mA(\lambda_\delta(\zhat))-\mA(\phi_\delta(\ubar{\etath})))\zhat+\mB(\lambda_\delta(\zhat)-\phi_\delta(\ubar{\etath})),\\
\mu^3_{\delta, \alpha}(\varepsilon,\zhat)&
=-\La(\lambda_\delta(\zhat))\mC\varepsilon,
\end{align*}
so that the solutions of \eqref{E:system_observer_closed} satisfy
\begin{equation}
    \dot{\zhat}=\mu^1_\delta(\zhat)+\mu^2_\delta(\zhat)+\mu^3_{\delta, \alpha}(\varepsilon,\zhat).
\end{equation}
In particular,
$$
\dot{\ubar{\etath}}
=
A\ubar{\etath} + \lambda_\delta(\zhat) b -  \lambda_\delta(\zhat) \eps_{n+1} b.
$$
By continuity of 
$(\zhat, \delta)\mapsto\lambda_\delta(\zhat)$, %
$$
\overline{M}:=\sup_{\substack{
\varepsilon,\zhat\in B_{\R^{n+1}}(0,\rho')
\\
\delta\in[0, \delta_0]}
}
|A\ubar{\etath} + \lambda_\delta(\zhat) b -  \lambda_\delta(\zhat)\eps_{n+1} b|
<\infty.
$$
Let $T_0 =\frac{\rho'-\rho}{\overline{M}}$. Since $|\varepsilon|$ is non-increasing,
any trajectory of \eqref{E:system_observer_closed} starting in $B_{\R^{n+1}}(0,\rho)\times B_{\R^{n+1}}(0,\rho)$ will be such that
$\ubar{\zhat}$ remains in $B_{\R^{n}}(0,\rho')$ over the time interval $[0,T_0]$.
It remains to show that $\ubar{\etath}$ does not exit $B_{\R^n}(0,\rho')$ after time $T_0$.

Note that $\mu^1_\delta(\zhat_1) = \mu^1_\delta(\zhat_2)$ if $\inv(\zhat_1)=\inv(\zhat_2)$.
Then,
$$
\underline{m}:=
- \max_{
\substack{
		\ubar{\etath}\in\partial D(r')
		\\
		\zhat\in B_{\R^{n+1}}(0,\rho')
		}
}
\big(L_{\mu_0^1}V\circ\inv\big)(\zhat)
= - \max_{
\substack{
		\inv(\etath)\in\partial D(r')
		\\
		\zhat\in B_{\R^{n+1}}(0,\rho')
		}
}
\frac{\partial V}{\partial x}\left(\inv(\etath)\right)(A+bK)\inv(\etath)
>0.
$$
Notice that
$
(\mu^1_\delta-\mu^1_0+\mu^2_\delta)(\zhat)
=
\delta \zhat_{n+1}
\begin{pmatrix}
b\\
b'\ubar{\zhat}
\end{pmatrix}$.
Hence, without loss of generality, one can assume that  $\delta_0>0$ is (small enough) such that for all $\delta\in(0, \delta_0)$,
$$
\max_{
B_{\R^{n+1}}(0,\rho')
}
|L_{\mu^1_\delta-\mu^1_0+\mu^2_\delta}V\circ\inv|\leq \frac{1}{3}\underline{m}.
$$
Fix $\delta\in(0, \delta_0)$.
Assume for the sake of contradiction that $\ubar{\etath}$ leaves $D(r')$ for the first time at $T'_0>T_0$. Then
\begin{align*}
0
&\leq
\frac{\diff}{\diff t}V\left(\inv(\zhat(t))\right)\Big|_{t=T'_0}\\
&=
(L_{\mu_0^1}V\circ\inv)(\zhat(T'_0))
+
(L_{\mu^1_\delta-\mu^1_0+\mu^2_\delta}V\circ\inv)(\zhat(T'_0))
+ %
\frac{\partial V\circ\inv}{\partial \zhat}(\zhat(T'_0))\mu^3_{\delta, \alpha}(\eps(T'_0), \zhat(T'_0))
\\
&\leq - \frac{2}{3}\underline{m}
+
\frac{\partial V\circ\inv}{\partial \zhat}(\zhat(T'_0))\mu^3_{\delta, \alpha}(\eps(T'_0), \zhat(T'_0))
\end{align*}
Now, we show that there exists $\alpha_0>0$ big enough such that for all $\alpha>\alpha_0$,
\begin{equation}\label{E:size_pert_h}
\frac{\partial V\circ\inv}{\partial \zhat}(\zhat(T'_0))\mu^3_{\delta, \alpha}(\eps(T'_0), \zhat(T'_0))
\leq \frac{1}{3}\underline{m},
\end{equation}
which contradicts $\underline{m}>0$.
By definition of $\La$, $\inv$ and $\mu^3_{\delta, \alpha}$,
$$
\frac{\partial V\circ\inv}{\partial \zhat}(\zhat)\mu^3_{\delta, \alpha}(\eps, \zhat)
=
-
\varepsilon_{n+1}
\lambda_\delta(\zhat)
\frac{\partial V}{\partial x} (\inv(\zhat)) b.
$$
Let $Q=\max_{
\substack{
		(\zhat_2,\zhat_3)\in\partial B_{\R^n}(0,\rho')
		\\
		\varepsilon,\zhat\in B_{\R^{n+1}}(0,\rho')
		}
} |\lambda_\delta(\zhat)  \frac{\partial V}{\partial x} (\inv(\zhat)) b|
$,
so that
$|\lambda_\delta(\zhat(T'_0))  \frac{\partial V}{\partial x} (\inv(\zhat(T'_0))) b|\leq Q$.
Recall that
$$
\dot{\varepsilon}_{n+1}=-\alpha \varepsilon_{n+1}+\lambda_\delta(\zhat)b'\ubar{\varepsilon}
$$
and thus, for all $t\geq0$,
$$
\varepsilon_{n+1}(t)=\e^{-\alpha t}\varepsilon_{n+1}(0)+\int_{0}^{t}\e^{-\alpha(t-s)}\lambda_\delta(\zhat(s))b'\ubar{\varepsilon}(s)\diff s.
$$
Moreover,
$\varepsilon(t)$ and $\ubar{\zhat}(t)$ are in $B_{\R^{n+1}}(0, \rho')$ for all $t\in[0, T'_0]$ and
$$
\lambda_\delta(\zhat)=\begin{pmatrix}
K&\delta
\end{pmatrix}
\zhat
=
K \ubar{\zhat}
+
\delta\left(\eps_{n+1} + \frac{1}{2}|\ubar{\etath}-\ubar{\eps}|^2\right)
.
$$
Hence,
$$
|\lambda_\delta(\zhat)|
\leq 
\rho'
\left(|K|
+
\delta(1+2\rho')
\right).
$$
As a consequence, for all $t\in[0, T'_0]$,
$$
|\varepsilon_{n+1}(t)|\leq \rho' \left(\e^{-\alpha t}+\frac{\rho'^2|b|}{\alpha}\left(|K|
+
\delta(1+2\rho')
\right)\right) .
$$
Thus there exists  $\alpha_0>0$ such that if $\alpha>\alpha_0$,
then $|\varepsilon_{n+1}(T'_0)|\leq \dfrac{\underline{m}}{3 Q}$.
Fix $\alpha>\alpha_0$. Then \eqref{E:size_pert_h} holds, which concludes the proof of the lemma.
\end{proof}

\subsubsection{Observability analysis}\label{sec:obs_finie}

The following lemma is a crucial step of the proof of Theorem~\ref{th:finite} that emphasizes the usefulness of the feedback perturbation described above. Indeed, one can easily see that its proof fails if $\delta=0$ (since the matrix $\mQ$ defined below is not invertible in this case).

\begin{lmm}\label{lem:observability}
Let $(z_0,\zhat_0)\in \left(\TT\times \R^{n+1}\right)\setminus\{(0,0)\}$. Let $(\varepsilon,\zhat)$ be the semi-trajectory of \eqref{E:system_observer_open} with initial condition $(\zhat_0-z_0,\zhat_0)$. Then, for all $T>0$, \eqref{E:system_plonge_finie} is observable in time $T$ for the input $u=\lambda_\delta(\zhat)$.
\end{lmm}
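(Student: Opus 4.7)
The plan is to reduce observability of \eqref{E:system_plonge_finie} for the specific input $u(\cdot)=\lambda_\delta(\zhat(\cdot))$ to showing that the homogeneous equation $\dot{\tilde z}=\mA(u)\tilde z$ admits only the trivial solution with $\mC\tilde z\equiv 0$ on $[0,T]$. Since $\mC\tilde z=\tilde z_{n+1}$, this constraint forces $\tilde z_{n+1}\equiv 0$ on $[0,T]$, and reading the last row of $\mA(u)\tilde z$ yields $u(t)\,b'\ubar{\tilde z}(t)\equiv 0$, while the upper block gives $\dot{\ubar{\tilde z}}=A\ubar{\tilde z}$, hence $\ubar{\tilde z}(t)=e^{At}\ubar{\tilde z}(0)$. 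If $\ubar{\tilde z}(0)=0$ then $\tilde z\equiv 0$ and we are done; otherwise, I will aim for a contradiction with $(z_0,\zhat_0)\neq(0,0)$.

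Since $(A,b)$ is controllable (as noted at the start of Section~\ref{sec_FeedbackPert}), the map $t\mapsto b'e^{At}\ubar{\tilde z}(0)$ is a nonzero real-analytic function on $\R$ with only isolated zeros; combined with continuity of $u$, this forces $u\equiv 0$ on $[0,T]$. Substituting $u\equiv 0$ into the observer system \eqref{E:system_observer_open} decouples the equations: $\dot{\ubar{\zhat}}=A\ubar{\zhat}$, $\dot{\zhat}_{n+1}=-\alpha\varepsilon_{n+1}$ and $\dot{\varepsilon}_{n+1}=-\alpha\varepsilon_{n+1}$. Integrating explicitly and using $z_0\in\TT$ (so that $\varepsilon_{n+1}(0)=(\zhat_0)_{n+1}-\tfrac12|\ubar{z_0}|^2$), the identity $u(t)=\lambda_\delta(\zhat(t))\equiv 0$ on $[0,T]$ becomes
\begin{equation*}
K\,e^{At}\ubar{\zhat_0}\;+\;\frac{\delta}{2}|\ubar{z_0}|^2\;+\;\delta\,\varepsilon_{n+1}(0)\,e^{-\alpha t}\;\equiv\;0.
\end{equation*}
Both sides are real-analytic in $t$, so by unique continuation the identity extends to all $t\in\R$.

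The heart of the proof is a linear-independence step separating the three modes on the right. Because $A$ is skew-symmetric, $e^{At}$ is orthogonal, so $K\,e^{At}\ubar{\zhat_0}$ is bounded on $\R$; the constant term is bounded as well; but $e^{-\alpha t}$ diverges as $t\to-\infty$. This forces $\varepsilon_{n+1}(0)=0$, after which $K\,e^{At}\ubar{\zhat_0}+\tfrac{\delta}{2}|\ubar{z_0}|^2\equiv 0$. Differentiating $k\geq 1$ times and evaluating at $t=0$ gives $KA^k\ubar{\zhat_0}=0$; setting $v=A\ubar{\zhat_0}$ reduces this to $KA^j v=0$ for every $j\geq 0$, so observability of $(K,A)$ (also noted in Section~\ref{sec_FeedbackPert}) forces $v=0$, whence $\ubar{\zhat_0}=0$ by invertibility of $A$. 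Substituting back, $\tfrac{\delta}{2}|\ubar{z_0}|^2=0$ gives $\ubar{z_0}=0$, so $z_0=0$ since $z_0\in\TT$, and finally $\varepsilon_{n+1}(0)=(\zhat_0)_{n+1}=0$ yields $\zhat_0=0$, contradicting $(z_0,\zhat_0)\neq(0,0)$.

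The main subtlety is precisely the mode-separation step: it is here that the two design ingredients interact, since the dissipative gain $\alpha$ creates the off-spectrum exponential mode, while the feedback perturbation $\delta\neq 0$ is what makes the constant $\tfrac{\delta}{2}|\ubar{z_0}|^2$ appear and lets one recover $\ubar{z_0}$. This matches the remark preceding the lemma that the argument collapses at $\delta=0$.
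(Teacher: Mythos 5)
Your proof is correct, and it takes a genuinely different route from the paper's. The paper works entirely with Taylor coefficients at $t=0$: it first shows $u^{(k)}(0)=0$ for all $k\in\N$ by a Leibniz-formula argument combined with controllability of $(A,b)$, then collects the resulting linear constraints on $(\zhat(0),\varepsilon_{n+1}(0))$ into an $(n+2)\times(n+2)$ matrix $\mQ$ whose invertibility is established by an explicit determinant computation in Appendix~\ref{app:det}, yielding $\det\mQ=\delta^2\alpha\,\Delta\, P(-\alpha)$ with $P$ the characteristic polynomial of $A$ and $\Delta$ an observability determinant. You instead upgrade the relation $u(t)\,b'e^{At}\ubar{\tilde z}(0)\equiv 0$ to $u\equiv 0$ on $[0,T]$ by invoking the isolated zeros of the nonzero analytic function $t\mapsto b'e^{At}\ubar{\tilde z}(0)$ (the same controllability fact, used functionally rather than coefficient-wise), integrate the then-decoupled observer in closed form, and separate the three spectral modes $\{-\alpha\}$, $\{0\}$ and the (purely imaginary) spectrum of $A$ by their growth as $t\to-\infty$. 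That growth argument encodes exactly what the nonvanishing of $P(-\alpha)$ and of the observability determinant provides in the paper's computation, so the two proofs rest on the same ingredients --- $(A,b)$ controllable, $(K,A)$ observable, $A$ invertible, $\delta\neq0$, $\alpha>0$ --- but yours trades the appendix determinant for an analytic-continuation and asymptotics argument, and is arguably more transparent about where each hypothesis (in particular $\delta\neq 0$) enters.
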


\begin{proof}
Let $\omega_0\in\ker(\mC)\setminus\{0\}$,
and consider $\omega$ a solution of  the dynamical system
\begin{equation}\label{E:omega}
   \dot{\omega}=\mA(\lambda_\delta(\zhat))\omega
\end{equation}
with initial condition
$\omega_0$.
To prove the result, it is sufficient to show that $\mC\omega$ has a non-zero derivative of some order at $t=0$ if $(\eps_0,\zhat_0)\neq(0, 0)$.
Indeed, it implies that for all initial conditions $\etat_0\neq\tilde{\etat}_0$ in $\R^{n+1}$, if $\etat$ (resp. $\tilde{\etat}$) is the solution of \eqref{E:system_plonge_finie} with initial condition $\etat_0$ (resp. $\tilde{\etat}_0$),
then $\omega = \etat - \tilde{\etat}$ is a solution to \eqref{E:omega} starting at $\omega_0\neq0$ and $\mC\omega$
is not constantly equal to zero on any time interval $[0, T]\subset\R_+$.
We prove this fact by contradiction: assume that 
\begin{equation}\label{E:omega_der}
\mC\omega^{(k)}(0)=\omega_{n+1}^{(k)}(0)=0\qquad \forall k\in \N, 
\end{equation}
for some $\omega(0)\neq0$, and prove that $(z_0, \zhat_0) = (0, 0)$.
Let $u = \lambda_\delta(\zhat)$. Then $\dot\omega_{n+1}=ub'\ubar{\omega}$ and $\dot{\ubar{\omega}} = \rot\ubar{\omega}$.
Hence
\begin{equation}
    \label{leibniz}
    0 = \omega_{n+1}^{(k+1)}(0) = \sum_{i=0}^k \binom{k}{i} u^{(i)}(0) b'\rot^{k-i}\ubar{\omega}(0)
\end{equation}
for all $k\in\N$, where $\binom{k}{i}$ denote binomial coefficients.
The proof goes through the following three steps.

\textbf{Step 1: Show that $\boldsymbol{u^{(k)}(0)=0}$ for all $\boldsymbol{k\in\N}$.}
Let $p\in\N$ be the smallest integer such that $u^{(p)}(0)\neq0$ and look for a contradiction. Equation \eqref{leibniz} yields
\begin{equation}
    \label{leibniz2}
    \sum_{i=0}^k \binom{p+k}{p+i} u^{(p+i)}(0) b'\rot^{k-i}\ubar{\omega}(0) = 0
\end{equation}
for all $k\in\N$. 
Since $(\rot, b)$ is controllable and $\ubar{\omega}(0)\neq0$, there exists $q\in\{0,\dots,n\}$ such that $ b'\rot^{q}\ubar{\omega}(0)\neq0$
and
$b'\rot^{i}\ubar{\omega}(0)=0$
for all $i\in\{0,\dots,q-1\}$.
Then
\begin{equation}
    \label{leibniz3}
    0 = \sum_{i=0}^q \binom{p+q}{p+i} u^{(p+i)}(0)  b'\rot^{q-i}\ubar{\omega}(0)
    =\binom{p+q}{p} u^{(p)}(0)  b'\rot^{q}\ubar{\omega}(0).
\end{equation}
which is a contradiction.

\textbf{Step 2: Find $\boldsymbol{\mQ\in\R^{(n+2)\times(n+2)}}$ (invertible) such that $\boldsymbol{\mQ\begin{pmatrix}
\zhat(0)\\
\epsilon_{n+1}(0)
\end{pmatrix}=0}$.}
For all $k\in\N$,
$$
0 = u^{(k)}(0)
= \begin{pmatrix}
K& \delta
\end{pmatrix}\zhat^{(k)}(0).
$$
Moreover,
\begin{align*}
\begin{pmatrix}
\dot{\ubar{\zhat}}\\
\dot{\zhat}_{n+1}\\
\dot{\epsilon}_{n+1}
\end{pmatrix}
=
\begin{pmatrix}
\rot & -bu & 0\\
b'u & 0 & -\alpha\\
0 & 0 & -\alpha
\end{pmatrix}
\begin{pmatrix}
\ubar{\zhat}\\
\zhat_{n+1}\\
\epsilon_{n+1}
\end{pmatrix}
+ u
\begin{pmatrix}
b\\
0\\
b' \ubar{\eps}
\end{pmatrix}.
\end{align*}
Hence, for all $k\geq1$,
$
\ubar{\zhat}^{(k)}(0) = \rot^k\ubar{\zhat}(0)
$ and
$
\zhat_{n+1}^{(k)}(0) = \epsilon_{n+1}^{(k)}(0) = (-\alpha)^k\epsilon_{n+1}(0)
$.\\
Thus
$\begin{pmatrix}
K\rot^k& \delta(-\alpha)^{k}
\end{pmatrix}\begin{pmatrix}
\ubar{\zhat}(0)\\
\epsilon_{n+1}(0)
\end{pmatrix} = 0$ for all $k\geq1$. By setting
\begin{equation}
\mQ =
\begin{pmatrix}
K & \delta & 0\\
K \rot & 0 & - \delta \alpha\\
\vdots & \vdots & \vdots \\
K \rot^{n+1} & 0 & \delta(-\alpha)^{n+1}
\end{pmatrix}
\end{equation}
we get that
$\mQ\begin{pmatrix}
\zhat(0)\\
\epsilon_{n+1}(0)
\end{pmatrix}=0$.

\textbf{Step 3: Conclusion.}
In Appendix~\ref{app:det}, we check that $\mQ$ is invertible.
Hence, $\zhat(0) = 0$ and $\eps_{n+1}(0) = 0$. Thus, $\frac{1}{2}\abs{\ubar{z}(0)}^2 = z_{n+1}(0) = \zhat_{n+1}(0) - \eps_{n+1}(0) = 0$ \emph{i.e.} $(z_0, \zhat_0) = (0, 0)$ which is a contradiction.
\end{proof}

On the basis of Lemmas~\ref{lem:bound} and~\ref{lem:observability},
we are now in position to prove Theorem~\ref{th:explicit}.
Let $\Kx\times\Kw\subset\R^n\times\R^{n+1}$ be a compact set,
and $\delta_0>0$ and $\alpha_0>0$ be as in Lemma~\ref{lem:bound}.
Fix $\delta\in(0, \delta_0)$ and $\alpha\in(\alpha_0, +\infty)$.
Let $(x_0, \etath_0)\in\Kx\times\Kw$ and $(x, \etath)$ be the corresponding solution of \eqref{E:system_sep_rot}. Set $\etat=\plong(x)$, $\eps=\etath-\etat$ so that $(\eps, \etath)$ is the solution of \eqref{E:system_observer_closed} starting from $(\eps_0, \etath_0)$,
$\eps_0=\etath_0-\plong(x_0)$.
We need to show the two following statements:
\medskip
\begin{itemize}[wide=0pt, labelwidth=\widthof{2. (Attractivity)~}]
    \item[1. (Stability)\hfill] $(0, 0)$ is a stable equilibrium point of~\eqref{E:system_sep_rot},
    \item[2. (Attractivity)] and its basin of attraction contains $\Kx\times\Kw$.
\end{itemize}
\medskip
We prove the former in Section~\ref{sec:stability} and the latter in Section~\ref{sec:attractivity}.

\subsubsection{Stability}\label{sec:stability}

Let $R>0$. We seek $r>0$ such that, if $|x_0|, |\etath_0|\leq r$, then $|x(t)|, |\etath(t)| \leq R$ for all $t\in\R_+$. We have
\begin{align*}
    \dot x
    &= A x + b \lambda_\delta(\etath)
    \\
    &= A x + b \lambda_\delta(\plong(x)+\eps)
    \\
    &= Ax + b\phi_\delta(x)
    + b \begin{pmatrix}
    K&\delta
    \end{pmatrix}
    \eps.
\end{align*}
Fix $\eta>0$ 
such that %
$R-\eta\sqrt{1+\frac{\eta^2}{2}}>0$.
Since $x\mapsto Ax + b\phi_\delta(x)$ is locally asymptotically stable,
there exist
two positive constant 
$r_x<\eta$ and
$\re\leq R-\eta\sqrt{1+\frac{\eta^2}{2}}$ such that,
if 
$|x_0|\leq r_x$ and
$|\eps(t)|\leq \re$ for all $t\in\R_+$, then $|x(t)|\leq \eta$ for all $t\in\R_+$.
Let 
$r\in(0, r_x)$
be such that
$r+r\sqrt{1+\frac{r^2}{2}}\leq\re$.
Assume that $|x_0|, |\etath_0|\leq r$.
Then,
$$
|\eps_0|
\leq |\etath_0|+|\plong(x_0)|
= |\etath_0|+|x_0|\sqrt{1+\frac{|x_0|^2}{2}}
\leq r + r\sqrt{1+\frac{r^2}{2}}
\leq \re.
$$
According to \eqref{E:eps_decroit}, $|\eps|$ is non-increasing.
Hence, 
for all $t\in\R_+$,
$|x(t)|\leq\eta\leq R$ and
$$|\etath(t)|\leq |\plong(x(t))| + |\eps(t)|\leq\eta\sqrt{1+\frac{\eta^2}{2}}+\re\leq R.$$

\subsubsection{Attractivity}\label{sec:attractivity}

According to \eqref{E:eps_decroit},
$\frac{\diff |\varepsilon|^2}{\diff t}= -2\alpha |\mC\varepsilon|^2$.
Due to LaSalle's invariance principle, the $\omega$-limit set of $\eps$ is the largest invariant subset of $\ker \mC$.
Since $\eps$ satisfies~\eqref{E:omega} on this set, Lemma~\ref{lem:observability} guarantees that either $\eps\equiv0$, or $(\eps_0, \etath_0)=(0, 0)$, which also implies $\eps\equiv0$.
Therefore, the $\omega$-limit set of $\eps$ reduces to $\{0\}$, \ie $\eps\to0$.

Since $\etath_{n+1} = \eps_{n+1} + \frac{1}{2}|\ubar{\etath}-\ubar{\eps}|^2$, it remains to prove that $\ubar{\etath}\to0$.
First, notice that 
$$
|\mu^2_\delta(\zhat)|
=
|\lambda_\delta(\zhat)-\phi_\delta(\ubar{\zhat})|\sqrt{|b|^2+|b'\ubar{\etath}|^2}
$$
and
$$
|\mu^3_{\delta, \alpha}(\varepsilon,\zhat)|
=
\sqrt{\alpha^2+|b|^2\lambda_\delta(\zhat)^2}|\mC\varepsilon|.
$$
Since $C\varepsilon\to 0$ and $\zhat$ is bounded, $|\mu^3_{\delta, \alpha}(\varepsilon,\zhat)|\to 0$.
Likewise,
\begin{align*}
\lambda_\delta(\zhat)-\phi_\delta(\zhat)
&=\delta\left(\etath_{n+1}-\frac{1}{2}|\ubar{\etath}|^2\right)\\
&=\delta\left(\eps_{n+1}+\etat_{n+1}-\frac{1}{2}|\ubar{\eps}|^2-\frac{1}{2}|\ubar{\etat}|^2+\ubar{\eps}'\ubar{\etat}\right)\\
&= \delta\left(\eps_{n+1}-\frac{1}{2}|\ubar{\eps}|^2+\ubar{\eps}'\ubar{\etat}\right).
\end{align*}
Since $\varepsilon\to 0$ and $\etat$ is bounded, $\mu^2_\delta(\zhat)\to0$.

According to the converse Lyapunov theorem \cite[Theorem 1]{teel2000smooth}, there exists a strict proper Lyapunov function $V_\delta$ for
system~\eqref{E:system_rot} with feedback law $\phi_\delta:x\mapsto K x+\frac{\delta}{2}|x|^2$ over the basin of attraction $\Dd$.
For all $r>0$, set $D(r)=\{x\in\Dd\mid V_\delta(x)\leq r\}$.
In order to prove that $\ubar{\etath}\to0$, we show that for all $r>0$, there exists $T(r)\geq0$ such that $\ubar{\etath}(t)\in D(r)$ for all $t\geq T(r)$.
According to Lemma~\ref{lem:bound}, there exists a compact set $\Kwn\subset \Dd$ such that $\ubar{\etath}\in \Kwn$.
If $r>0$ is such that $\Kwn\subset D(r)$ then $T(r)=0$ satisfies the statement.
Let $0<r<R$ be such that $\Kwn\not \subset  D(r)$ and $\Kwn\subset D(R)$, then
$$
\bar{m}:=-\max_{x\in D(R)\setminus D(r)} 
\frac{\partial V}{\partial x}(x)(Ax+b\phi_\delta(x))
>0.
$$
Since $|g(\zhat(t))|\to 0$ and $|h(\varepsilon(t),\zhat(t))|\to 0$, there exists $T_1(r)>0$ such that for all $t\geq T_1(r)$, if $\ubar{\etath}(t)\not\in D(r)$, then
$$
\frac{\diff}{\diff t}V_\delta(\ubar{\etath})<-\frac{\bar{m}}{2}.
$$
First, this implies that if $\ubar\zhat(t)\in D(r)$ for some $t\geq T_1(r)$, then $\ubar\zhat(s)\in D(r)$ for all $s\geq t$.
Second, for all $t\geq0$,
\begin{align*}
    V_\delta(\ubar{\etath}(T_1(r)+t))
    &=
    V_\delta(\ubar{\etath}(T_1(r)))
    + \int_0^{t}\frac{\diff}{\diff s}V_\delta(\ubar{\etath}(T_1(r)+s))\diff s
    \\
    &\leq R - \frac{\bar{m}}{2}t \tag{while $\ubar{\etath}(T_1(r)+t)\notin D(r).$}
\end{align*}
Set $T_2(r) = \frac{2R-r}{\bar{m}}$ and $T(r) = T_1(r) + T_2(r)$. Then for all $t\geq T(r)$, $\ubar{\etath}(t)\in D(r)$, which concludes the proof of convergence, and therefore the proof of Theorem~\ref{th:explicit}.

\section{An infinite-dimensional perspective}\label{sec:infinite}

Guided by the illustrative example of Section~\ref{sec:finite}, we aim to provide more general results, based on the same two principles: embedding into a dissipative system, and feedback perturbation.
The embedding strategy used in Section~\ref{sec:embedding-finite} appears to be hardly generalizable, a different strategy must be found.
In \cite{Celle-etal.1989}, the authors introduce a  technique for the synthesis of observers for nonlinear systems.
The method is based on representation theory, and embedding into bilinear unitary systems.
It is far more general than the embedding found in Section~\ref{sec:embedding-finite}.
The price to pay is that the observer system can be infinite-dimensional.
In the rest of the paper,
we apply this strategy in the context of dynamic output feedback stabilization at an unobservable target.
In this section, we exhibit
some general results when such an embedding exists.

\subsection{Infinite-dimensional framework and statement of the main result}

Since our goal is to introduce an infinite-dimensional strategy for output feedback stabilization,
Definition~\ref{def:stab_out} must be amended.
When dealing with infinite-dimensional systems, it is necessary to fix a suitable functional framework.
Moreover, we also would like take into account piecewise constant feedback laws.
For these reasons, we introduce the following definitions.

\begin{dfntn}\label{def:framework}
Let $(\XX,\|\cdot \|_{\XX})$ be a Hilbert space and $\dom\subset\XX$ be a dense subspace.
Let $(\EE(u):\dom\to\XX)_{u\in\R^p}$ be a family of 
(potentially) unbounded linear operators.
Let $\BB:\R^m\to\XX$ and $\varpi:\XX\times\R^m\to\R^p$ be two continuous maps.
For all $k\in\N$, set $t_k = k\dt$ for some $\dt>0$. We call \emph{infinite-dimensional piecewise constant dynamic output feedback} of system~\eqref{E:system_general} the system
\begin{equation}\label{E:system_stab_infinie_0}
\left\{
\begin{aligned}
&\dot x = f(x, u)\\
&y = h(x)
\end{aligned}
\right.
,\qquad
\left\{
\begin{aligned}
&\dot{\etath}= \EE(u) \etath + \BB(y)\\
&u(t_k) = \varpi(\etath(t_k^-), y(t_k^-))\\
&u(t) = u(t_k),\qquad t\in[t_k,t_{k+1})
\end{aligned}
\right.
\end{equation}
where %
$\varpi(\etath(t_k^-), y(t_k^-)) = \lim_{\substack{t\to t_k\\t< t_k}}\varpi(\etath(t), y(t))$.

\end{dfntn}

\begin{dfntn}\label{def:stab_inf}
Let $\Kx\subset\R^n$ be a compact set.
System~\eqref{E:system_general}
is said to be \emph{stabilizable over $\Kx$ by means of an infinite-dimensional 
piecewise constant
dynamic output feedback}
if there exists a feedback in the form of \eqref{E:system_stab_infinie_0} as in Definition~\ref{def:framework}, a bounded set $\Kz\subset\XX$ and $\etath^\star\in\Kz$ such that the following conditions are satisfied:
\begin{enumerate}[label=\textit{(\roman*)}]
    \item For all $(x_0, \etath_0)\in\Kx\times\Kz$,
    there exists at least one solution $(x, \etath)$
    of \eqref{E:system_stab_infinie_0}
    in
    $C^0(\R_+, \R^n\times\doma)$.
    \label{defi}
    \item For all $R_x, R_{\etath}>0$, there exist $r_x, r_{\etath}>0$ such that
    for all $(x_0, \etath_0)\in\Kx\times\Kz$,
    if $|x_0|<r_x$ and $\norm{\etath_0-\etath^\star}<r_{\etath}$,
    then any solution $(x, \etath)$ of \eqref{E:system_stab_infinie_0}
    starting from $(x_0, \etath_0)$ satisfies
    $|x(t)|<R_x$ and $\norm{\etath(t)-\etath^\star} < R_{\etath}$ for all $t\geq0$.
    \label{defii}
    \item Any solution $(x, \etath)$ of \eqref{E:system_stab_infinie_0} with initial condition in $\Kx\times\Kz$ is such that
    $x(t)\to0$ and $\etath(t)\cvf\etath^\star$ as $t$ goes to infinity.
    \label{defiii}
\end{enumerate}
Furthermore, this property is said to be \emph{semi-global} if it holds for any compact $\Kx\subset \R^n$.

\end{dfntn}

\begin{rmrk}
If $\XX$ is finite-dimensional, then \ref{defi}-\ref{defii}-\ref{defiii} is equivalent to the usual definition of asymptotic stability of~\eqref{E:system_stab_infinie_0} at $(0, 0)$ with basin of attraction containing $\Kx\times\Kz$
(except that the feedback is now piecewise constant).
However, when $\XX$ is infinite-dimensional (the case of interest
in this section), the convergence of trajectories towards the equilibrium point holds only in the weak topology. Hence, \ref{defi}-\ref{defii}-\ref{defiii} is not equivalent to the usual definition of asymptotic stability of the infinite-dimensional system~\eqref{E:system_stab_infinie_0}.
\end{rmrk}

In the following three sections, we introduce some general tools that can be used to prove stabilizability by means of an infinite-dimensional 
piecewise constant
dynamic output feedback.
Their development is motivated by the investigation of 
the following two-dimensional system presenting an 
archetypal singularity at the target point:

\begin{equation}\label{E:system_depart0}
\left\{
\begin{aligned}
&\xdot= \matR x + b u
\\
&y= h(x)
\end{aligned}
\right.
\qquad
\text{with}\
\matR = \begin{pmatrix}
0&-1
\\
1&0
\end{pmatrix},
\
b = \begin{pmatrix}
0
\\
1
\end{pmatrix}
\ \text{and}\
h:\R^2\to\R^m.
\end{equation}

Note that \eqref{E:system_depart0} is in the form of~\eqref{E:system_rot} as soon as $h$ is  radially symmetric.
Of course, our analysis is of interest only if~\eqref{E:system_depart0} is not uniformly observable. 
In particular, with $h(x) = \frac{1}{2}|x|^2$, we recover a subcase of \eqref{E:system_rot2} that was investigated in Theorem~\ref{th:finite}.
The finite-dimensional strategy developed in Section~\ref{sec:finite}
was specific to this output, and we wish to explore stabilization for different output maps that present other observability singularities.

We can now state the main stabilization result obtained on system~\eqref{E:system_depart0}, which is the main matter of  Section~\ref{sec:example_infinie}.
The method relies on unitary irreducible representations of the group induced by the dynamics. Their corresponding special functions, which are Bessel functions, play a major role.
Recall that the Bessel function of the first kind of order $k\in\Z$ is given by:
\begin{equation}\label{Bessel_def}
    J_k:\R\ni \ray \mapsto \frac{1}{2\pi}\int_0^{2\pi}e^{i\ray\sin(\varS) - ik\varS}\diff \varS\in\R.
\end{equation}
The first positive zero of the derivative of $J_1$ is denoted by $j_1$.
\begin{thrm}\label{cor:main}
If for all $r>0$ and all $\theta\in\S^1$, $\hh(h(\ray \cos\angleS, \ray \sin\angleS)) = 
\sum_{k\in\sousz} c_k J_k(\mu \ray) e^{-ik\angleS}
$
for some map $\hh:\R^m\to\C$, $\mu>0$,
$(c_k)_{k\in\sousz}\in \C^{\sousz}$
and
$\sousz\subset\Z$ finite,
then
\eqref{E:system_depart0} is stabilizable over any compact set $\mathcal{K}$ in $B_{\R^2}(0, \frac{j_1}{\mu})$
by means of an infinite-dimensional piecewise constant dynamic output feedback.
\end{thrm}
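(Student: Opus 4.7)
The strategy applies the general infinite-dimensional scheme of Section~\ref{sec:infinite} through the representation theory of the special Euclidean group $SE(2)$: the Bessel modes $J_k(\mu\ray)e^{-ik\angleS}$ appearing in the hypothesis are, up to phases coming from the Jacobi--Anger expansion, the matrix coefficients of the irreducible unitary representation $\rep_\mu$ of $SE(2)$ of index $\mu$, acting on $\XX = L^2(\mathbb{T})$.

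I would first embed $\R^2$ into $\XX$ via $\plong: x\mapsto \rep_\mu(g_x)\one$, where $g_x\in SE(2)$ is the translation by $x$ and $\one$ is the constant function, so that $\plong(x)(\angleS)=e^{i\mu(x_1\cos\angleS+x_2\sin\angleS)}$. Under the dynamics of \eqref{E:system_depart0}, the embedded state $\etat=\plong(x)$ satisfies $\dot\etat=\EE(u)\etat$ with the skew-adjoint generator $\EE(u) = \partial_\angleS + iu\mu\sin\angleS$ on $\doma = H^1(\mathbb{T})$, and the Fourier decomposition of $\plong(x)$ combined with the hypothesis yields a trigonometric polynomial $\psi\in\XX$ such that $\hh(h(x))=\psX{\plong(x)}{\psi}$, realizing the nonlinear output as a linear functional of the embedded state. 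Imitating Section~\ref{sec:converse}, I would then introduce the Luenberger observer
\begin{equation*}
\dot\etath = \EE(u)\etath + \alpha\bigl(\hh(y)-\psX{\etath}{\psi}\bigr)\psi,
\end{equation*}
whose error $\eps=\etath-\etat$ satisfies the dissipation identity $\frac{\diff}{\diff t}\norm{\eps}^2 = -2\alpha|\psX{\eps}{\psi}|^2\leq 0$, the infinite-dimensional analogue of \eqref{E:eps_decroit}.

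Because $(A,b)$ is controllable, some $K\in\R^{1\times 2}$ renders $A+bK$ Hurwitz. Following Section~\ref{sec_FeedbackPert}, the piecewise constant feedback would take the form $\varpi(\etath, y) = K\inv(\etath) + \delta\,\hh(y)$, where $\inv:\XX\to\R^2$ extracts a state estimate via a fixed linear projection onto a finite set of Fourier modes of $\etath$, and $\delta>0$ is a small perturbation parameter. Holding $u$ constant on each $[t_k,t_{k+1})$ turns the closed-loop observer dynamics into a one-parameter maximal-dissipative semigroup on each sub-interval, which is the key technical ingredient for the detectability analysis below.

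Items \ref{defi}--\ref{defii} of Definition~\ref{def:stab_inf} should follow from infinite-dimensional analogues of Lemmas~\ref{lem:delta}--\ref{lem:bound}: dissipation provides weak precompactness of the $\etath$-orbits in $\XX$, while the Lyapunov argument of Lemma~\ref{lem:delta} ensures that the basin of attraction of the perturbed state feedback contains $\Kx$ for $\delta$ small enough, provided $\Kx\subset B_{\R^2}(0,j_1/\mu)$. The main obstacle will be attractivity (item~\ref{defiii}): a weak LaSalle-type argument reduces it to proving that any closed-loop trajectory with $\psX{\eps(t)}{\psi}\equiv 0$ must satisfy $\eps\equiv 0$. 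The piecewise constant structure rephrases this as a nondegeneracy condition on the Gramian formed by iterated actions $\{e^{\dt\EE(u_0)}\psi, e^{\dt\EE(u_1)}\psi, \dots\}$, and it is here that the constraint $\Kx\subset B_{\R^2}(0,j_1/\mu)$ enters: since $j_1$ is the first positive zero of $J_1'$, the map $\ray\mapsto J_1(\mu\ray)$ is strictly monotone on $[0,j_1/\mu]$, so the perturbed inputs at successive sampling times excite enough Bessel modes to separate points of $\plong(\Kx)$ from $\plong(0)$. This forces $\eps(t)\tow 0$, which together with the stability of the perturbed closed-loop for $x$ yields $x(t)\to 0$ and $\etath(t)\tow\plong(0)=\one$, proving the result.
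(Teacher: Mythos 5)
Your overall architecture --- the unitary representation embedding $\plong$, the Luenberger observer with dissipative error, the perturbed sampled feedback, and the rewriting of the hypothesis as $\hh(h(x))=\psX{\plong(x)}{\zeta}$ for a trigonometric polynomial $\zeta$ --- is exactly the paper's: the theorem is obtained there by combining Propositions~\ref{prop:Jk}, \ref{prop:detec} and~\ref{prop:obs} with Theorem~\ref{th:infinite}. However, the step on which everything hinges, attractivity, would fail as you have written it. You reduce item~\ref{defiii} to a ``weak LaSalle-type argument'': any closed-loop trajectory with $\mC\eps\equiv0$ must satisfy $\eps\equiv0$. LaSalle's invariance principle is not available in this setting (the error orbit is only weakly precompact, by Alaoglu, while the invariance principle needs strong precompactness), and even granting the reduction, the implication $\mC\eps\equiv0\Rightarrow\eps\equiv0$ is an observability statement that is \emph{false} for the input $u\equiv0$ --- which is precisely the singular input the closed loop converges to. The paper instead runs a subsequence argument (Section~\ref{sec:attractivity_infinie}) with a case split: either the sampled inputs $u(t_{k_n-j})$ stay away from $0$ along a subsequence, in which case Assumption~\ref{ass:inobs} and the persistence results of \cite{Celle-etal.1989,brivadis:hal-02529820} give $\eps^\star=0$; or they all tend to $0$, in which case the limit relations $K\invf(e^{-ij\dt}\psX{\etath^\star}{e_1})+\delta\psnlim=0$ for $j=0,1,2$ form an invertible linear system forcing $\psnlim=0$, hence $\etath\tow\one$, after which short-time $0$-detectability (Assumption~\ref{ass:detec}) yields $x\to0$ and $\eps\tow0$. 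This case split is the heart of the proof and is absent from your sketch.

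Two of your design choices also undercut that argument. The perturbation must be a function of the \emph{observer} state that vanishes exactly when $\etath\tow\plong(0)$ on bounded sets; the paper takes $\delta\psn^2(\etath-\one)$, and it is this choice that makes the system above solvable and delivers $\etath\tow\one$. With $\delta\,\hh(y)$, vanishing of the perturbation only gives $\mC\plong(x)\to0$, which neither locates $x$ (the output is singular at the target) nor controls $\etath$ weakly (and, unless one recentres $\hh$, it does not even vanish at the target, so $0$ is not an equilibrium). Similarly, no linear projection can left-invert the nonlinear map $\plong$: the paper's $\inv=\invf(\psX{\cdot}{e_1})$ must invert $J_1$ on $[0,j_1]$, and this monotonicity of $J_1$ --- not a Gramian nondegeneracy condition on iterated outputs --- is where the radius $j_1/\mu$ actually enters, both in the construction of the strong left-inverse and in the boundedness lemma (Corollary~\ref{cor:semiglob}) that keeps $|\psX{\etath}{e_1}|$ below $J_1(j_1)$.
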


The idea behind this result is that using representation theory, we are able to exhibit an embedding of System~\eqref{E:system_depart0} into a dissipative system over a Hilbert space. If the output of \eqref{E:system_depart0} can be transformed into a linear form of this space, this allows to write a classical Luenberger observer. 
This strategy is responsible for the particular form of $\hh\circ h$ in terms of Bessel functions: it corresponds precisely to the composition of the embedding with linear forms, as will be made explicit later.
In particular, this approach allows to recover output stabilization results with the output $h(x)=\frac{1}{2}|x|^2$ that was discussed in Section 3, as illustrated by the following corollary.

\begin{crllr}\label{cor:x2inf}
If $h(x)=\frac{1}{2}|x|^2$,
then \eqref{E:system_depart0} is semi-globally stabilizable
by means of an infinite-dimensional piecewise constant dynamic output feedback.
\end{crllr}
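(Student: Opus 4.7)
The plan is to reduce the corollary to Theorem~\ref{cor:main} by verifying that the output $h(x) = \frac{1}{2}|x|^2$ admits the required Bessel decomposition and by showing that the parameter $\mu$ in Theorem~\ref{cor:main} can be tuned to absorb any prescribed compact initial set.

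The first step is to exploit radial symmetry. The composition $h(\ray\cos\angleS, \ray\sin\angleS) = \ray^2/2$ is independent of $\angleS$, so the right-hand side $\sum_{k\in\sousz} c_k J_k(\mu \ray) e^{-ik\angleS}$ in the hypothesis of Theorem~\ref{cor:main} must also be $\angleS$-independent. This forces every Fourier mode with $k\neq 0$ to vanish, and I would therefore try the minimal choice $\sousz = \{0\}$, $c_0 = 1$, and look for $\hh$ satisfying $\hh(\ray^2/2) = J_0(\mu \ray)$. Defining
\begin{equation*}
\hh(y) = J_0\!\left(\mu\sqrt{2\max(y,0)}\right)
\end{equation*}
(extended arbitrarily for negative arguments, which never appear in the image of $h$) realizes this identity for any $\mu > 0$. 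Thanks to the power series expansion $J_0(x)=\sum_{k\geq 0}\frac{(-1)^k}{(k!)^2}(x/2)^{2k}$, this $\hh$ is analytic on $[0,+\infty)$, so the Bessel decomposition required by Theorem~\ref{cor:main} is available.

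The second step is to cover an arbitrary compact set $\Kx \subset \R^2$. Given $\rho > 0$ with $\Kx \subset \bar{B}_{\R^2}(0, \rho)$, I would pick $\mu \in (0,\, j_1/\rho)$, so that $\Kx \subset B_{\R^2}(0,\, j_1/\mu)$. Applying Theorem~\ref{cor:main} with this choice of $\mu$, together with $\sousz=\{0\}$, $c_0=1$, and $\hh$ as above, yields stabilizability of \eqref{E:system_depart0} over $\Kx$ by an infinite-dimensional piecewise constant dynamic output feedback. Since $\Kx$ was arbitrary, this is precisely the semi-global property required by Definition~\ref{def:stab_inf}.

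No substantial obstacle is expected. The whole content of the argument is the observation that radial symmetry of $h$ collapses the decomposition to a single $k=0$ mode, and that the admissibility radius $j_1/\mu$ appearing in Theorem~\ref{cor:main} can be inflated to any prescribed value by shrinking $\mu$. The price paid is that the feedback law (in particular the embedding and observer gain underlying Theorem~\ref{cor:main}) depends on $\Kx$ through the choice of $\mu$, exactly as in the semi-global finite-dimensional result of Theorem~\ref{th:finite}.
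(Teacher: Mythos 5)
Your proposal is correct and follows essentially the same route as the paper: the paper also takes $\hh(y)=J_0(\mu\sqrt{2y})$ (so that $\hh\circ h$ reduces to the single $k=0$ Bessel mode) and shrinks $\mu$ with the prescribed compact set, the only cosmetic difference being that you invoke Theorem~\ref{cor:main} as a black box while the paper verifies Assumptions~\ref{ass:h},~\ref{ass:detec} and~\ref{ass:inobs} directly and applies Theorem~\ref{th:infinite}, which is exactly how Theorem~\ref{cor:main} is derived in the first place.
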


The proofs of Theorem~\ref{cor:main} and Corollary~\ref{cor:x2inf} are discussed in Section~\ref{sec:example_infinie} as an application of Theorem~\ref{th:infinite}.
As for the case $h(x) = \frac{1}{2}|x|^2$ that was treated  via a finite-dimensional strategy in Section~\ref{sec:finite},
one can devise a similar strategy for radially symmetric output maps where $\frac{1}{2}|x|^2$ is extracted at least locally around the target by inversion.
However, this is impossible if the output is not radially symmetric.
For instance,
if $h(\ray \cos\angleS, \ray \sin\angleS) = J_2(\mu r)\cos(2\angleS)$
for some $\mu>0$,
then the output is not radially symmetric and the system is unobservable at the target point. Still, according to Theorem~\ref{cor:main},
system~\eqref{E:system_depart0}
is stabilizable over any compact in
$B_{\R^2}(0, \frac{j_1}{\mu})$
by means of an infinite-dimensional embedding-based dynamic output feedback.

\subsection{Embedding into unitary systems and observer design}

Let $(\XX, \|\cdot\|_\XX)$ be a separable Hilbert space and $\dom$ be a dense subspace of $\XX$.
For all $u\in\R^p$, let $\opa(u) : \doma \to \XX$ be the skew-adjoint generator of a strongly continuous unitary group on $\XX$ and $\opc\in \lin(\XX, \C^{\mm})$ for some positive integer $\mm$.
Let $u:\R_+\to\R^p$ be piecewise constant, \ie such that
$u(t) = u(t_k)$ for all $t\in[t_k, t_{k+1})$ for some sequence $(t_k)_{k\in\N}$ in $\R_+$ with constant positive increment $t_{k+1} - t_k = \dt$.
Let $z_0\in\XX$.
Consider the non-autonomous linear abstract Cauchy problem with linear measured output
\begin{equation}\label{E:system_plonge_infinie}
\begin{aligned}
\begin{cases}
\dot \etat = \opa(u(t)) \etat\\
\etat(0) = \etat_0
\end{cases}
\end{aligned}
\qquad
\yy = \mC z.
\end{equation}
Acording to \cite[Chapter 5, Theorem 4.8]{Pazy}, the family $(\opa(u(t)))_{t\in\R_+}$ is the generator of a unique evolution system on $\XX$ that we denote by $(\evol_{t}(\cdot, u))_{t\in\R_+}$.
For any $\etat_0\in\XX$,~\eqref{E:system_plonge_infinie} admits a unique solution $\etat\in C^0(\R_+, \XX)$ given by $\etat(t) = \evol_{t}(\etat_0, u)$ for all $t \in\R_+$.
Moreover, if $\etat_0\in\doma$, then $\etat\in C^0(\R_+, \doma)$ and
is continuously differentiable (with values in $\XX$) on $[t_k, t_{k+1}]$ for all $k\in\N$.
The reader may refer to \cite[Chapter~5]{Pazy}, \cite[Chapter~VI.9]{engel2001one} or \cite{ito} for more details on the evolution equations theory.

For such systems, a Luenberger observer with constant gain $\rr>0$ can be built as follows:
\begin{equation}
\begin{aligned}
\begin{cases}
\dot \etath = \opa(u(t)) \etath - \rr\opc^*(\opc\etath-\yy)\\
\etath(0) = \etath_0\in\XX.
\end{cases}
\end{aligned}
\label{obs}
\end{equation}
Set $\eps = \etath-\etat$ and $\eps_0 = \etath_0-\etat_0$.
From now on, $\etath$ represents the state estimation made by the observer system and $\eps$ the error between this estimation and the actual state of the system.
Then $\etath$ satisfies~\eqref{obs} if and only if $\eps$ satisfies
\begin{equation}
\begin{aligned}
\begin{cases}
\dot \eps = (\opa(u(t)) - \rr\opc^*\opc)\eps\\
\eps(0) = \eps_0.
\end{cases}
\end{aligned}
\label{eps}
\end{equation}
Since $\opc\in\linxy$, \cite[Chapter 5, Theorem 2.3]{Pazy} claims that $(\opa(u(t))-\rr\opc^*\opc)_{t\ge 0}$ is also a stable family of generators of strongly continuous semigroups, and generates an evolution system on $\XX$ denoted by $(\evoleps_t(\cdot, u))_{t\in\R_+}$.
Then,
systems \eqref{obs} and~\eqref{eps} have respectively a unique solution $\etath$ and $\eps$ in $C^0(\R_+, \XX)$. 
Moreover, $\etath(t) =\evol_t(\etat_0, u)+\evoleps_t(\eps_0, u)$ and $\eps(t) = \evoleps_t(\eps_0, u)$ for all $t \in\R_+$. 
If $(\etath_0, \eps_0)\in\doma^2$,
$\etath$, $\eps\in C^0(\R_+, \doma)$ and
are continuously differentiable (with values in $\XX$) on $[t_k, t_{k+1}]$ for all $k\in\N$.

This infinite-dimensional Luenberger observer has been investigated in \cite{Celle-etal.1989} (see also \cite{brivadis:hal-02529820}), in which it is proved that $\eps(t)\cvf0$ as $t$ goes to infinity if $u$ is a \emph{regularly persistent input}.
Our goal is to embed the original system \eqref{E:system_general} into a unitary system, and to use this observer design in the context of dynamic output feedback stabilization.
In Section~\ref{sec:rep}, we exhibit an explicit embedding of \eqref{E:system_depart0} into \eqref{E:system_plonge_infinie}.

\begin{dfntn}[Embedding]
An injective map $\plong:\R^n\mapsto\XX$ is said to be an embedding\footnote{
This definition does not coincides with the usual notion of embedding in differential topology.}
of \eqref{E:system_general} into the unitary system \eqref{E:system_plonge_infinie} if
there exists $\hh:\R^m\to\C^\mm$ such that
the following diagram is commutative for all $t\in\R_+$ 
and any piecewise constant input $u:\R_+\to \R^p$:
\begin{equation}\label{diagram}
\xymatrix{
\R^n \ar[d]_\plong \ar[r]^{\varphi_t(\cdot, u)}
& \R^n \ar[d]^\plong \ar[r]^h
& \R^m \ar[r]^\hh
& \C^\mm
\\
\XX \ar[r]_{\sg_t(\plong(\cdot), u)}
& \XX \ar[urr]_\mC
}
\end{equation}
\emph{i.e.}, for all $x_0\in\R^n$,
$\plong(\varphi_t(x_0, u)) = \sg_t(\plong(x_0), u)$
and
$\hh(h(x_0)) = \mC \plong(x_0)$.
\end{dfntn}

Here, the map $\hh$ is a degree of freedom that may be chosen 
to find an embedding of \eqref{E:system_general} into~\eqref{E:system_plonge_infinie}.
Let $u:\R_+\to \R^p$ be piecewise constant,
$\etat_0,\, \eps_0\in\dom$, $\etat(t) = \evol_t(\zhat_0, u)$ and $\eps(t) = \evoleps_t(\eps_0, u)$ for all $t\in\R_+$.
For all $k\in\N$, $\mA(u(t_k))$ is skew-adjoint, hence for all $t\in[t_k, t_{k+1})$,
\begin{align}
    &\frac12\frac{\diff \norm{\etat}^2}{\diff t}(t)
    = \Re\psX{\mA(u(t))\etat(t)}{\etat(t)}
    = 0,
    \label{E:etat_constant}
    \\
    &\frac{1}{2}\frac{\diff \norm{\varepsilon}^2}{\diff t}(t)
    =\Re\psX{\mA(u(t))\varepsilon(t)}{\eps(t)} -\alpha\Re\psX{\mC^*\mC\varepsilon(t)}{\varepsilon(t)}
    = -\alpha \norm{\mC\varepsilon(t)}^2
    \leq 0.
    \label{E:eps_decroit_infinie}
\end{align}
Thus $\norm{\etat}$ is constant and $\norm{\varepsilon}$ is non-increasing.
If there exists a positive constant $\beta$ such that for all $x\in\dom$ and all $u\in\R$,
\begin{equation}\label{plong:eqCCA}
    \norm{\mC^*\mC\mA(u)x} \leq \beta \norm{x},
\end{equation}
then
\begin{align*}
\frac12\frac{\diff}{\diff t}\|\mC\eps(t)\|_{\C^\mm}^2
    &=  \langle\mC\eps(t), \mC\dot\eps(t)\rangle_{\C^\mm}\\
    &=  \langle\mC\eps(t), \mC\mA(u(t))\eps(t)\rangle_{\C^\mm} - \alpha \langle\mC\eps(t), \mC\mC^*\mC\eps(t)\rangle_{\C^\mm}\\
    &= \psX{\eps(t)}{\mC^*\mC\mA(u(t))\eps(t)} - \alpha \|\mC^*\mC\eps(t)\|_Z^2\\
    &\leq \beta \|\eps(0)\|_Z^2
\end{align*}
since $\norm{\varepsilon}$ is non-increasing.
Then,
$t\mapsto\norm{\mC\varepsilon(t)}^2$ is non-negative, integrable over $\R_+$ (by \eqref{E:eps_decroit_infinie}), and has bounded derivative (since $\mA(u(t))$ is skew-adjoint).
Hence, according to Barbalat's lemma,
$\mC\varepsilon(t)\to0$ as $t\cv+\infty$.
Inequality~\eqref{E:eps_decroit_infinie} is similar to \eqref{E:eps_decroit}, and will be a key argument to achieve the dynamic output feedback stabilization.

\subsection{
Embedding inversion:
from the embedded system's weak observer to the original system's observer
}

In Section~\ref{sec:finite}, a crucial argument was the existence of a left-inverse $\inv$ to the embedding $\plong$
(see \eqref{def:uppi}).
Now, $\XX$ being infinite-dimensional, we 
need to 
precise the notion of left-inverse, and, moreover, %
the convergence of the observer $\etath$ to the embedded state $\etat$ 
will hold only in the weak topology of $\XX$, namely, $\eps\cvf0$.
This is an important issue, which causes difficulties in achieving output feedback stabilization.
However, in this section, we show that if the original state $x$ remains bounded, and if the embedding $\plong$ is injective and analytic, then $\xhat=\inv(\etath)$ is actually an observer of $x$ in the usual topology of $\R^n$, namely, $\xhat-x\to0$.
This is summarized in Corollary~\ref{cor:conv_faible_forte}, which is an important result of the paper.

\begin{dfntn}[Strong left-inverse]
Let $(\XX, \|\cdot\|_\XX)$ be a normed vector space, $\Kx\subset\R^n$ and $\plong:\R^n\to\XX$.
A map $\inv:\XX\to\Kx$ is called a \emph{strong left-inverse} of $\plong$ on $\Kx$ if and only if there exists
a class $\Kinf$ function\footnote{A class $\Kinf$ function is a continuous function $\rho^*:\R_+\to\R_+$ such that $\rho^*(0)=0$, $\rho^*$ is strictly increasing and tends to infinity at infinity.} $\rho^*$ and $\qform\in\lin(\XX,\C^q)$ for some a positive integer $q$
such that,
for all $(x,  \xi)\in\Kx\times\XX$,
\begin{equation}\label{E:ps_inv}
    |\inv(\xi)-x|
    \leq \rho^*(|\qform(\xi-\plong(x))|).
\end{equation}
\end{dfntn}

\begin{rmrk}
If $\inv$ is a strong left-inverse of $\plong$ on $\Kx$,
then \eqref{E:ps_inv} implies that $\inv$ is also a left-inverse in the usual sense:
for all $x\in\Kx$, $\inv(\plong(x))=x$.
In particular, $\plong$ is injective over $\Kx$.
\end{rmrk}

The reason for which we look for a strong left-inverse of $\plong$ is the following lemma, which follows directly from \eqref{E:ps_inv} and the fact that $\qform\in\lin(\XX,\C^q)$.

\begin{lmm}\label{lem:ps_inv}
Let $(\XX, \|\cdot\|_\XX)$ be a normed vector space, $\Kx\subset\R^n$ and $\plong:\R^n\to\XX$.
Let $\inv:\XX\to\Kx$ be a strong left-inverse of $\plong$ on $\Kx$.
Let $(x_n)_{n\in\N}$ and $(\xi_n)_{n\in\N}$ be two sequences in $\Kx$ and $\XX$, respectively.
If $\xi_n - \plong(x_n) \cvf 0$ as $n$ goes to infinity,
then $\abs{\inv(\xi_n) - x_n}\cv0$ as $n$ goes to infinity.
\end{lmm}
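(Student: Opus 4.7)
The plan is to exploit two elementary facts: that bounded linear operators are weakly-to-weakly continuous, and that on the finite-dimensional space $\C^q$ the weak and strong topologies coincide. Combined with the estimate \eqref{E:ps_inv} and the continuity of $\rho^*$ at the origin, these facts deliver the conclusion almost immediately.

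More precisely, I would proceed as follows. First, since $\qform \in \lin(\XX, \C^q)$ is bounded and linear, it is weakly continuous: from $\xi_n - \plong(x_n) \cvf 0$ in $\XX$ it follows that $\qform(\xi_n - \plong(x_n)) \cvf 0$ in $\C^q$. Second, because $\C^q$ is finite-dimensional, weak and strong convergence agree on it, so $|\qform(\xi_n - \plong(x_n))| \to 0$ in $\R$. Third, applying the estimate \eqref{E:ps_inv} to the pair $(x_n, \xi_n) \in \Kx \times \XX$ gives
\begin{equation*}
    0 \le |\inv(\xi_n) - x_n| \le \rho^*\bigl(|\qform(\xi_n - \plong(x_n))|\bigr),
\end{equation*}
and since $\rho^*$ is continuous with $\rho^*(0)=0$, the right-hand side tends to $0$. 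The squeeze then yields $|\inv(\xi_n) - x_n| \to 0$.

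There is no real obstacle in this proof; the whole content is packaged into the definition of a strong left-inverse, whose key design feature is precisely that the error between $\inv(\xi)$ and $x$ is controlled by the image under a \emph{finite-dimensional-valued} bounded linear map $\qform$ of $\xi - \plong(x)$. That is the one point worth emphasizing: the finite-dimensionality of the target of $\qform$ is what upgrades weak convergence in $\XX$ (which is all that the infinite-dimensional observer delivers, in view of \eqref{E:eps_decroit_infinie} and Barbalat's lemma) to strong convergence of $\inv(\xi_n)$ toward $x_n$ in $\R^n$. This is exactly the bridge needed for the subsequent use of the observer in output feedback stabilization.
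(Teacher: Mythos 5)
Your proof is correct and matches the paper's intent exactly: the paper gives no separate proof, stating only that the lemma ``follows directly from \eqref{E:ps_inv} and the fact that $\qform\in\lin(\XX,\C^q)$,'' which is precisely the chain you spell out (weak continuity of the bounded linear map $\qform$, coincidence of weak and strong convergence on the finite-dimensional space $\C^q$, then the estimate \eqref{E:ps_inv} together with $\rho^*(0)=0$ and continuity of $\rho^*$). Nothing is missing.
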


This justifies the denomination of \emph{strong} left-inverse, in the sense that it allows to pass from weak convergence in the infinite-dimensional space $\XX$ to (usual) convergence in the finite-dimensional space $\R^n$.
The following theorem states sufficient conditions for the existence of a strong left-inverse.

\begin{thrm}\label{th:ps_inv}
Let $\XX$ be a separable Hilbert space, $\plong:\R^n\to\XX$ be an analytic map and $\Kx\subset\R^n$ be a compact set .
If $\plong|_\Kx$ is injective,
then $\plong$ has a continuous strong left-inverse on $\Kx$.
\end{thrm}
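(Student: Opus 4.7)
\emph{Plan.} The goal is to produce, for some $q\in\N^*$, a bounded linear map $\qform\in\lin(\XX,\C^q)$, a continuous map $\inv:\XX\to\Kx$, and a class $\Kinf$ function $\rho^*$ satisfying \eqref{E:ps_inv}. I would proceed in three steps.

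\emph{Step 1 (key step): separating linear projection.} Let $(e_i)_{i\ge 1}$ be an orthonormal basis of $\XX$ and $P_q$ the orthogonal projection onto $\mathrm{span}(e_1,\dots,e_q)$. Since $\plong(\Kx)$ is compact and $P_q\to\Id_\XX$ strongly, $P_q\circ\plong\to\plong$ uniformly on $\Kx$. The claim is that for $q$ large enough, $P_q\circ\plong|_\Kx$ is injective, in which case we set $\qform(\xi)=(\psX{\xi}{e_1},\dots,\psX{\xi}{e_q})$. I would argue by contradiction: otherwise, for each $q$ there exist $x_q\ne y_q$ in $\Kx$ with $P_q(\plong(x_q))=P_q(\plong(y_q))$; by compactness, extract $x_q\to x^*$ and $y_q\to y^*$, and uniform convergence forces $\plong(x^*)=\plong(y^*)$, hence $x^*=y^*$ by injectivity of $\plong|_\Kx$. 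The normalized directions $v_q=(y_q-x_q)/|y_q-x_q|$ accumulate to some unit $v^*$; by analyticity, the Taylor expansion of $\plong$ at $x_q$ yields $(\plong(y_q)-\plong(x_q))/|y_q-x_q|\to D\plong(x^*)v^*$, which is non-zero provided $D\plong(x^*)v^*\neq 0$, contradicting $P_q\to\Id_\XX$ strongly. The degenerate case $D\plong(x^*)v^*=0$ is handled by iterating with higher-order Taylor terms: since $\plong|_\Kx$ is injective and $\plong$ is analytic, some derivative $D^m\plong(x^*)(v^*)^{\otimes m}$ must be non-zero, and the corresponding re-normalization produces the contradiction.

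\emph{Step 2: construction of $\inv$.} With $\qform$ from Step 1, the map $\qform\circ\plong|_\Kx:\Kx\to K:=\qform(\plong(\Kx))\subset\C^q$ is a continuous bijection from a compact to a Hausdorff space, hence a homeomorphism. Let $\psi:K\to\Kx$ be its continuous inverse; being continuous on a compact, it is uniformly continuous, with a continuous non-decreasing modulus $\omega:\R_+\to\R_+$ vanishing at $0$. I would extend $\psi$ componentwise via Tietze's theorem to a continuous map $\tilde\psi:\C^q\to\R^n$, and set $\inv:=\tilde\psi\circ\qform$. Then $\inv$ is continuous on $\XX$ and restricts to $\psi\circ\qform\circ\plong=\Id_\Kx$ on $\plong(\Kx)$.

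\emph{Step 3: quantitative bound.} For $x\in\Kx$ and $\xi\in\XX$, since $x=\tilde\psi(\qform(\plong(x)))$ and $\inv(\xi)=\tilde\psi(\qform(\xi))$, the triangle inequality and uniform continuity of $\tilde\psi$ on bounded subsets (together with boundedness of its image in a neighborhood of $\Kx$, which bounds $|\inv(\xi)-x|$ a priori for large arguments) yield an estimate $|\inv(\xi)-x|\le \tilde\omega(|\qform(\xi-\plong(x))|)$ for some continuous non-decreasing $\tilde\omega$ with $\tilde\omega(0)=0$. A class $\Kinf$ majorant $\rho^*$ is then obtained, e.g., by $\rho^*(r):=\tilde\omega(r)+r$, completing the proof.

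\emph{The main obstacle} is Step 1: the separation of points of $\plong(\Kx)$ by finitely many continuous linear functionals. The uniform-convergence argument alone is insufficient when the differential of $\plong$ degenerates along the accumulating direction, and it is precisely the analyticity of $\plong$ (which rules out infinite-order flatness along a curve without contradicting injectivity of $\plong|_\Kx$) that allows one to conclude, via a careful induction on the order of the first non-vanishing derivative.
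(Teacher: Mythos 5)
Your Steps 2 and 3 are essentially sound and reconstruct by hand (Tietze extension plus a modulus of continuity turned into a $\Kinf$ majorant) what the paper gets by citing \cite[Lemma 6]{Bernard-etal.2017} and \cite[Theorem 1]{AndrieuPraly2006}. The gap is in Step 1, exactly where you flag the main obstacle. Your target claim there is correct and is precisely what the paper establishes (injectivity of $x\mapsto(\psX{\plong(x)}{e_0},\dots,\psX{\plong(x)}{e_{q-1}})$ on $\Kx$ for $q$ large), but the compactness-plus-Taylor argument does not prove it. Two concrete problems. First, the induction on the order of the first non-vanishing derivative already fails at order two: from $P_q(\plong(y_q)-\plong(x_q))=0$ divided by $|y_q-x_q|^2$, the first-order contribution is $D\plong(x_q)v_q/|y_q-x_q|$, and knowing only $D\plong(x^*)v^*=0$ gives no control on it, since $D\plong(x_q)v_q$ is typically of order $|x_q-x^*|$, which is unrelated to $|y_q-x_q|$. (Take a component of $\plong$ behaving like $x_1x_2$ near $x^*=0$, with $x_q=(1/q,0)$ and $y_q=(1/q,1/q^2)$: the renormalized quotient equals $q$ and blows up, while $D^2(x_1x_2)(v^*)^{\otimes 2}=0$.) The one-variable picture of renormalizing by the first non-vanishing directional derivative does not survive in $\R^n$, because the pairs $(x_q,y_q)$ may approach the diagonal anisotropically. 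Second, the assertion that injectivity of $\plong|_\Kx$ forces some $D^m\plong(x^*)(v^*)^{\otimes m}\neq0$ is unjustified: all of these vanish exactly when the analytic curve $t\mapsto\plong(x^*+tv^*)$ is locally constant, which contradicts injectivity on $\Kx$ only if the line $x^*+\R v^*$ meets $\Kx$ in more than one point near $x^*$; the theorem assumes nothing of the sort ($\Kx$ may be totally disconnected).

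The paper exploits analyticity in a global, algebraic way that sidesteps all such degeneracies at once: the sets $E_i=\{(x_a,x_b)\mid\psX{\plong(x_a)-\plong(x_b)}{e_k}=0,\ k<i\}$ form a non-increasing sequence of analytic subsets of $\R^n\times\R^n$, and by the stationarity (Noetherian) property of decreasing families of analytic sets on compacta \cite[Ch.~5, Cor.~1]{Narasimhan}, $E_q\cap\Kx^2=\bigcap_iE_i\cap\Kx^2$ for some finite $q$; the right-hand side is the diagonal by injectivity of $\plong|_\Kx$. Any elementary replacement would have to reprove some form of this stationarity, which is exactly what your Taylor-expansion induction fails to capture; as written, Step 1 is a genuine gap rather than a routine detail.
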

\begin{proof}
Let $(e_k)_{k\in\N}$ be a Hilbert basis of $\XX$.
For all $i\in\N$, let
$$E_i = \{(x_a, x_b) \in \R^n\times\R^n\mid \forall k\in\{0,\dots,i-1\},\ \psX{\plong(x_a)-\plong(x_b)}{e_k} = 0\}.$$
Then $(E_i)_{i\in\N}$ is a non-increasing family of analytic sets.
According to \cite[Chapter 5, Corollary 1]{Narasimhan}, $(E_i \cap \Kx^2)_{i\in\N}$ is stationary, \emph{i.e.}, there exists $q\in\N$ such that $E_q \cap \Kx^2 = E_i \cap \Kx^2$ for all $i\geq q$. Hence,
\begin{align}
E_q \cap \Kx^2
&= \bigcap_{k\in\N} E_k\cap \Kx^2\nonumber\\
&= \{(x_a, x_b) \in \Kx^2\mid \plong(x_a)=\plong(x_b)\} \tag{since $(e_k)_{k\in\N}$ is a Hilbert basis of $\XX$}\\
&= \{(x_a, x_a) \mid x_a\in \Kx\}\nonumber
\end{align}
since $\plong$ is injective on $\Kx$.
Let $\qform : \XX \ni \xi \mapsto (\psX{\xi}{e_k})_{k\in\{0,\dots,q-1\}} \in \C^q$ and $\tilde{\plong} = \qform \circ \plong$.
Then $\tilde{\plong}$ is continuous and injective on $\Kx$. Indeed, for all $(x_a, x_b)\in\Kx^2$, if $\tilde{\plong}(x_a) = \tilde{\plong}(x_b)$, then $(x_a, x_b)\in E_q \cap \Kx^2$ which yields $x_a=x_b$.
Hence, combining \cite[Lemma 6]{Bernard-etal.2017} and \cite[Theorem 1]{AndrieuPraly2006}, there exists a continuous map $\tilde{\inv}:\C^q\to\Kx$ and a class $\Kinf$ function $\rho^*$ such that for all $(x,  \mathfrak{z})\in\Kx\times\C^q$,
$|\tilde{\inv}(\mathfrak{z})-x| \leq \rho^*(|\mathfrak{z}-\tilde{\plong}(x)|).
$
Set $\inv = \tilde{\inv}\circ\qform$.
Then $\inv$ is continuous and for all $(x,  \xi)\in\Kx\times\XX$,
\[
    |\inv(\xi)-x|
    \leq \rho^*(|\qform(\xi)-\tilde{\plong}(x)|)
    = \rho^*(|\qform(\xi-\plong(x))|).
\]
\end{proof}
Applying Theorem~\ref{th:ps_inv}, then Lemma~\ref{lem:ps_inv}, we get the following important corollary in our context.

\begin{crllr}\label{cor:conv_faible_forte}
Let $\plong:\R^n\to\XX$ be an analytic embedding of \eqref{E:system_general} into the unitary system \eqref{E:system_plonge_infinie}
and $\Kx$ be a compact subset of $\R^n$.
Then $\plong$ has a continuous strong left-inverse $\inv$ on $\Kx$.

Let $x_0\in\Kx$, $\zhat_0\in\XX$ and
$u:\R_+\to\R^p$ piecewise constant.
Denote by $x$ and $\zhat$ the corresponding solutions of \eqref{E:system_general} and \eqref{obs}, respectively.
Set $\etat=\plong(x)$ and $\xhat = \inv(\zhat)$.
Assume that $x(t)\in\Kx$ for all $t\in\R_+$.
If $\zhat - \etat\cvf0$,
then $\xhat - x\cv0$.
\end{crllr}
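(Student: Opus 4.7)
The statement is really two claims packaged together, and the plan is to read it as a corollary of Theorem~\ref{th:ps_inv} and Lemma~\ref{lem:ps_inv}. For the first claim — existence of a continuous strong left-inverse on $\Kx$ — I would simply invoke Theorem~\ref{th:ps_inv}: an embedding in the sense of the paper is by definition an injective map, hence injective on $\Kx$, and it is analytic by hypothesis. Theorem~\ref{th:ps_inv} then furnishes the continuous map $\inv:\XX\to\Kx$ together with the auxiliary data $\qform\in\lin(\XX,\C^q)$ and $\rho^*\in\Kinf$ appearing in the key inequality~\eqref{E:ps_inv}:
\[
|\inv(\xi)-x|\leq \rho^*(|\qform(\xi-\plong(x))|)\qquad \forall(x,\xi)\in\Kx\times\XX.
\]

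For the dynamical conclusion, I would apply this inequality pointwise in $t$. Since $\plong$ is an embedding, the commutative diagram~\eqref{diagram} gives $\etat(t)=\plong(x(t))$ along the trajectory, so setting $\xi=\zhat(t)$ and $x=x(t)\in\Kx$ yields
\[
|\xhat(t)-x(t)|\leq \rho^*(|\qform(\zhat(t)-\etat(t))|).
\]
The remaining job is to show that the right-hand side tends to $0$ from the weak convergence hypothesis $\zhat-\etat\cvf 0$.

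This is precisely where the \emph{finite-dimensional} nature of the codomain of $\qform$ matters: $\qform\in\lin(\XX,\C^q)$ is weakly continuous, so weak convergence $\zhat(t)-\etat(t)\cvf 0$ in $\XX$ forces $\qform(\zhat(t)-\etat(t))\to 0$ weakly in $\C^q$, and in finite dimension weak and norm convergence coincide, so this is norm convergence in $\C^q$. Since $\rho^*$ is continuous at $0$ with $\rho^*(0)=0$, the right-hand side of the displayed inequality tends to $0$, which gives $|\xhat(t)-x(t)|\to 0$ in $\R^n$. (The continuous-time limit can be reduced to sequences in the standard way, or one can apply Lemma~\ref{lem:ps_inv} directly along any sequence $t_n\to\infty$ after setting $x_n=x(t_n)\in\Kx$ and $\xi_n=\zhat(t_n)$.)

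The only conceptually nontrivial ingredient is the weak-to-strong passage, which is exactly what the construction of a \emph{strong} left-inverse in Theorem~\ref{th:ps_inv} was designed to enable by factoring the inverse through a finite-dimensional projection; given that, the proof is a short diagram chase using \eqref{E:ps_inv} and the identity $\etat=\plong(x)$. I therefore do not expect any serious obstacle: the content of the corollary is precisely to combine these two ingredients and record that the convergence of the infinite-dimensional observer in the weak topology automatically yields convergence of $\xhat=\inv(\zhat)$ to $x$ in the usual topology of $\R^n$, provided $x$ stays in the compact set over which the strong left-inverse is defined.
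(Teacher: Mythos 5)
Your proposal is correct and follows exactly the route the paper intends: the paper itself gives no separate proof and simply states that the corollary is obtained by "applying Theorem~\ref{th:ps_inv}, then Lemma~\ref{lem:ps_inv}," which is precisely your argument (injectivity of the embedding plus analyticity give the strong left-inverse, and the inequality \eqref{E:ps_inv} combined with the weak continuity of the finite-rank map $\qform$ converts weak convergence in $\XX$ into norm convergence in $\R^n$). No gaps.
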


\begin{rmrk}
Beyond the problem of output feedback stabilization, Corollary~\ref{cor:conv_faible_forte} may be used in the context of observer design.
In \cite{Celle-etal.1989}, after embedding the original finite-dimensional system into an infinite-dimensional unitary system, the authors investigate only the convergence of the infinite-dimensional observer.
Corollary~\ref{cor:conv_faible_forte} states that if the infinite-dimensional observer converges and if the original system's state trajectory remains bounded, then an observer can  be built for the original system, by using a strong left-inverse of the embedding.
\end{rmrk}

\subsection{Feedback perturbation and closed-loop system}

In order to set up a separation principle to solve the dynamic output feedback stabilization problem of \eqref{E:system_general}, let us assume that Condition~\ref{hyp:state_feedback} (semi-global) 
and that \eqref{E:system_general} admits an analytic embedding into the unitary system \eqref{E:system_plonge_infinie}.
Let $\Kx$ be a compact subset of $\R^n$.
Denote by $\phi$ a locally asymptotically stabilizing state feedback of \eqref{E:system_general} with basin of attraction containing $\Kx$ and by $\plong$ an embedding of \eqref{E:system_general} into \eqref{E:system_plonge_infinie}.
According to Theorem~\ref{th:ps_inv}, there exists $\inv:\XX\to\Kx$, a strong left-inverse of $\plong$ on $\Kx$.
Then, a natural way to build a dynamic output feedback would be
to combine \eqref{E:system_general}-\eqref{obs} with the control input $u = \phi(\inv(\etath))$, and to ensure that the state $x$ of \eqref{E:system_general} remains in $\Kx$.
However, due to the unobservability of the original system at the target, we propose, as in Section~\ref{sec_FeedbackPert}, to add a perturbation to this feedback law.
In \cite{Celle-etal.1989}, the convergence of the error system \eqref{eps} to $0$, when it holds, is only in the weak topology of $\XX$.
Therefore, the perturbation added to the feedback law must be chosen to vanish when the observer state $\etath$ of \eqref{obs} tends towards $\plong(0)$ in the weak topology.
For this reason, let us define a weak norm on $\XX$.

\begin{dfntn}[Weak norm]
Let $(e_k)_{k\in\Z}$ be a Hilbert basis of $\XX$.
For all $\xi\in\XX$,
set $$\psn(\xi) = \sqrt{\sum_{k\in\Z} \frac{|\psX{\xi}{e_k}|^2}{k^2+1}}.$$
Then $\psn$ defines a norm, we call the \emph{weak norm},
on $\XX$.
\end{dfntn}
Note that $\psn$ is not equivalent to $\|\cdot\|_\XX$, but satisfies
\begin{equation}\label{E:def_nu}
    \psn(\cdot)\leq \nu\|\cdot\|_\XX
\quad \text{ with }\quad  \nu = \sqrt{\sum_{k\in\Z} \frac{1}{k^2+1}} <+\infty.
\end{equation}
Moreover, $\psn$ induces a metric on bounded sets of $\XX$ endowed with the weak topology. More precisely, for any bounded sequence $(\xi_n)_{n\in\N}$ in $\XX$,
$\psn(\xi_n) \cv 0$ as $n$ goes to infinity
if and only if 
$\xi_n\cvf 0$ as $n$ goes to infinity.
Now, for some positive constant $\delta$ to be fixed (small enough) later, we can add the perturbation $\etath\mapsto\delta \psn^2(\zhat-\plong(0))$ to the feedback law.
Finally, the following coupled system is an infinite-dimensional piecewise constant dynamic output feedback of system~\eqref{E:system_general}:
\begin{equation}\label{E:system_stab_infinie}
\left\{
\begin{aligned}
&\dot x = f(x, u)\\
&y = h(x)
\end{aligned}
\right.
,\qquad
\left\{
\begin{aligned}
&\dot{\zhat}= \mA(u) \zhat -\rr\mC^*(\mC\zhat-\hh(\mes))\\
&u(t_k) = \phi(\inv(\zhat(t_k^-))) + \delta\psn^2(\zhat(t_k^-)-\plong(0))\\
&u(t)=u(t_k),\qquad t\in[t_k,t_{k+1})
\end{aligned}
\right.
\end{equation}
where $t_k = k\dt$ for all $k\in\N$ for some $\dt>0$.

\section{Revisiting the illustrative example}
\label{sec:example_infinie}

In this section, we illustrate the use of infinite-dimensional embeddings in the context of output feedback stabilization on a two-dimensional example with linear dynamics and nonlinear observation map.
Let  $h:\R^2\to\R^m$.
We consider the problem of stabilization by means of an infinite-dimensional embedding-based dynamic output feedback of the following system:
\begin{equation}\label{E:system_depart}
\left\{
\begin{aligned}
&\xdot= \matR x + b u
\\
&y= h(x)
\end{aligned}
\right.
\qquad
\text{with}\
\matR = \begin{pmatrix}
0&-1
\\
1&0
\end{pmatrix}
\ \text{and}\
b = \begin{pmatrix}
0
\\
1
\end{pmatrix}.
\end{equation}

Since $(A, b)$ is stabilizable, there exists $K\in\R^{1\times2}$ such that $A+bK$ is Hurwitz.
Moreover, $A$ is skew-symmetric. Hence $\kappa = |K|$ can be chosen arbitrarily small.
Then, the state feedback law $\phi:x\mapsto Kx$
is such that \eqref{E:system_depart} with $u = \phi(x)$ is globally asymptotically stable at 0.
Note that \eqref{E:system_depart} does not exactly fit the form of~\eqref{E:system_rot} since $h$ is not necessarily radially-symmetric.
Of course, our analysis is of interest only if~\eqref{E:system_depart} is not uniformly observable. In Example~\ref{ex:Jk}, we exhibit a non-radially symmetric $h$ that makes the system non-uniformly observable, and on which our (infinite-dimensional) embedding-based strategy does apply.
In the following we give some sufficient conditions on $h$ allowing the design of a stabilizing infinite-dimensional dynamic output feedback.
The main result of this section, Theorem~\ref{th:infinite} (stated in Section~\ref{sec:obs_infinie}), relies on three main hypotheses: the existence of an embedding of~\eqref{E:system_depart} 
into~\eqref{E:system_plonge_infinie}, 
and two observability assumptions.
For each of these assumptions, we provide examples of output maps $h$ satisfying theses hypotheses.

We wish to underline that in the rest of the paper, all mentions of prior occurrences of $f(x,u)$ make the assumption that $f(x,u)=Ax+bu$.

\subsection{Unitary representations and embeddings}
\label{sec:rep}

In \cite{Celle-etal.1989}, the authors investigated the problem of observer design for~\eqref{E:system_depart} by means of infinite-dimensional embeddings.
We briefly recall their strategy, that relies on representation theory (see, \eg \cite{vilenkin1978special, barut}).
The Lie group $\GG$ of system~\eqref{E:system_depart} (the group of flows generated by the dynamical system \eqref{E:system_depart} with constant inputs)
is isomorphic to $\R^2 \rtimes_\RR H$, where $H\simeq \{e^{tA}, t\in\R_+\} \simeq \S^1$ is the group of rotations (isomorphic to the unit circle), $\RR:\S^1\ni\angleS\mapsto e^{\angleS A}$ is an automorphism of $\R^2$
and $\rtimes_\RR$ denotes the outer semi-direct product with respect to $\RR$.
Hence $\GG$ is the group of motions of the plane.
According to \cite[Section IV.2]{vilenkin1978special},
its unitary irreducible representations are given by a family  $(\rep_\mu)_{\mu >0}$, where for each $\mu>0$,
\fonction{\rep_\mu}{\GG}{\lin(L^2(\S^1, \C))}{(x, \vartheta)}{
\left(  \xi\in L^2(\S^1, \C) \mapsto \left(\S^1\ni\varS\mapsto e^{i\mu (1, 0) e^{\varS A'} x}
\xi(\varS-\vartheta)\right)   \right).}
Let $\XX = L^2(\S^1, \C)$ be
the set of real-valued square-integrable functions over $\S^1$.
Then $\XX$ is a Hilbert space endowed with the scalar product defined by $\psX{\xi}{\zeta} = \frac{1}{2\pi}\int_0^{2\pi}\xi(\varS)\bar{\zeta}(\varS)\diff \varS$ and the induced norm $\|\cdot\|_\XX$.
Since $\S^1$ is compact, the constant function $\one:\varS\mapsto1$ lies in $\XX$.
Let $\mu>0$ to be fixed later.
Set
\fonction{\plong_\mu}{\R^2}{\XX}{x}{\rep_\mu(x, 0)\one.}
Note that $\plong_\mu$ depends on $\mu$, but from now on we omit this dependence in the notation and write $\plong$ instead of $\plong_\mu$.
Since $\rho_\mu$ is a unitary representation, $\|\plong(x)\|_\XX=1$ for all $x\in\R^2$ and $\plong(0) = \one$.
For all $x=(x_1, x_2)=(\ray\cos\angleS, \ray \sin\angleS)$ in $\R^2$, we have
\begin{equation}\label{E:deftau}
    \plong(x):\S^1\ni\varS\mapsto e^{i\mu(
    x_1\cos(\varS)+
    x_2\sin(\varS))
    }=
    e^{i\mu \ray\cos(\varS-\angleS)
    }.
\end{equation}
If $x, \tilde{x}\in\R^2$ are such that $\plong(x) = \plong(\tilde{x})$,
then $(x_1 - \tilde{x}_1)\cos(\varS) + (x_2 - \tilde{x}_2)\sin(\varS) = 0$ for all $\varS\in\S^1$, hence $x = \tilde{x}$.
Thus $\plong$ is injective.
Let $x$ be a solution of \eqref{E:system_depart} and set $\etat = \plong(x)$
Then
\begin{align*}
    \dot \etat
    &= i \mu \left(\xdot_1 \cos(\varS) + \xdot_2 \sin(\varS)\right) \etat\\
    &= i \mu \left(-x_2 \cos(\varS) + x_1 \sin(\varS)
    + u \sin(\varS) \right) \etat\\
    &= - \frac{\partial \etat}{\partial \varS} + i u \mu \sin(\varS) \etat\\
    &= \mA(u) \etat
\end{align*}
with $\mA(u) = -\frac{\partial}{\partial \varS} + i u \sin(\varS)$ defined on the dense domain $\dom = H^1(\S^1, \C) = \{f\in\XX\mid f'\in\XX\}$.
The operator $\mA(u)$ is the skew-adjoint generator of a strongly continuous unitary group on $\XX$ for any $u\in\R$.
In order to make $\plong$ an embedding of \eqref{E:system_depart} into \eqref{E:system_plonge_infinie}, we need the output map to be in the form $\mathfrak y = \mC z$. This is where the degree of freedom $\hh$ introduced in \eqref{E:system_stab_infinie} may be employed.
More specifically, we make the following  first assumption on the observation map $h$.
\begin{assumption}[Linearizable output map]\label{ass:h}
There exist $\hh:\R^m\to\C^\mm$ and $\mC\in \lin(\XX, \C^\mm)$ such that
$\hh(h(x)) = \mC\plong(x)$ for all $x\in\R^2$.
\end{assumption}

If Assumption~\ref{ass:h} holds for System~\eqref{E:system_depart}, then the closed loop infinite-dimensional piecewise constant dynamic output feedback system can take the form of \eqref{E:system_stab_infinie}.

For all $k\in\Z$, let
\fonction{e_k}{\S^1}{\C}{\varS}{e^{ik\varS}.}
The family $(e_k)_{k\in\Z}$ forms a Hilbert basis of $\XX$.
In the rest of the paper, the weak norm $\NN$ is always defined with respect to this Hilbert basis.
Then, for all $x=(\ray \cos\angleS, \ray \sin\angleS)\in\R^2$ and all $k\in\Z$,
\begin{align}
    \psX{\plong(x)}{e_k}
    &= \frac{1}{2\pi} \int_0^{2\pi} e^{i\mu\ray\cos(\varS-\angleS)-ik\varS}\diff \varS\nonumber\\
    &= \frac{1}{2\pi} e^{-ik\angleS + i\frac{\pi}{2}}\int_0^{2\pi} e^{i\mu\ray\sin(\varS)-ik\varS}\diff \varS\nonumber\\
    &=i^k J_k(\mu \ray) e^{-ik\angleS} \label{eq_Bessel}
\end{align}
where $J_k$ denotes the Bessel function of the first kind of order $k\in\Z$ (see \eqref{Bessel_def}).
Since $(e_k)_{k\in\Z}$ is a Hilbert basis of $\XX$ we get the following interpretation of the linearizable output map assumption in the case of \eqref{E:system_depart}.

\begin{prpstn}[Outputs satisfying Assumption~\ref{ass:h}]\label{prop:Jk}
If there exist a map $\hh:\R^m\to\C$ and a sequence $(c_k)_{k\in\Z}\in l^2(\Z, \C)$ such that  %
$\hh(h(\ray \cos\angleS, \ray \sin\angleS)) = 
\sum_{k\in\Z} c_k J_k(\mu \ray) e^{-ik\angleS}
$
then \eqref{E:system_depart} satisfies Assumption~\ref{ass:h}.
\end{prpstn}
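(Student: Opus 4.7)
The claim reduces to a direct computation once we read off how $\tau(x)$ decomposes in the Hilbert basis $(e_k)_{k\in\Z}$. My plan is to take $\mm = 1$, keep the given map $\hh:\R^m\to\C$, and construct $\mC \in \lin(\XX, \C)$ explicitly as a bounded linear functional whose Fourier coefficients match the coefficients $(c_k)_{k\in\Z}$ (up to a power of $i$ that compensates for \eqref{eq_Bessel}).

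Concretely, I would set
\begin{equation*}
    \mC\xi \;=\; \sum_{k\in\Z} (-i)^k c_k \, \psX{\xi}{e_k}, \qquad \xi\in\XX,
\end{equation*}
which is equivalent to $\mC\xi = \psX{\xi}{g}$ with $g = \sum_{k\in\Z} i^k \overline{c_k}\, e_k \in \XX$. Boundedness is immediate from Cauchy--Schwarz and Parseval: $|\mC\xi|^2 \le \bigl(\sum_k |c_k|^2\bigr)\bigl(\sum_k |\psX{\xi}{e_k}|^2\bigr) = \|c\|_{\ell^2}^2\, \norm{\xi}^2$, which uses exactly the hypothesis $(c_k) \in \ell^2(\Z,\C)$. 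So $\mC\in\lin(\XX,\C)$.

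It then remains to verify that $\mC\plong(x) = \hh(h(x))$ for every $x = (\ray\cos\angleS, \ray\sin\angleS)\in\R^2$. Plugging the identity \eqref{eq_Bessel}, namely $\psX{\plong(x)}{e_k} = i^k J_k(\mu\ray) e^{-ik\angleS}$, into the definition of $\mC$ gives
\begin{equation*}
    \mC\plong(x)
    = \sum_{k\in\Z} (-i)^k c_k \cdot i^k J_k(\mu\ray) e^{-ik\angleS}
    = \sum_{k\in\Z} c_k J_k(\mu\ray) e^{-ik\angleS}
    = \hh(h(x))
\end{equation*}
by the standing hypothesis. Thus Assumption~\ref{ass:h} is satisfied with $\mm = 1$, the given $\hh$, and this $\mC$. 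There is no real obstacle here: the only thing to check carefully is the $\ell^2$ summability required to make $\mC$ bounded, which is built into the hypothesis, and the interchange of the infinite sum with the inner-product expansion, which is justified by $\ell^2$-convergence in $\XX$.
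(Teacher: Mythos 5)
Your proposal is correct and is exactly the argument the paper intends: the proposition is stated as an immediate consequence of \eqref{eq_Bessel} and the fact that $(e_k)_{k\in\Z}$ is a Hilbert basis, with $\mC$ of the form $\psX{\cdot}{\zeta}$ for $\zeta=\sum_k i^k\overline{c_k}\,e_k$ (the same form used later in Propositions~\ref{prop:detec} and~\ref{prop:obs}). You merely make explicit the boundedness via the $\ell^2$ hypothesis and the $i^k$ compensation, which the paper leaves implicit.
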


\begin{xmpl}\label{ex:Jk}
The three observation maps
$h(x) = J_0(\mu|x|)-1$ (with $\hh(y) = y+1$),
$h(x) = J_2(\mu|x|)\cos(2\angleS)$ (with $\hh(y) = y$)
and $h(x) = |x|$ (with $\hh(y) = J_0(\mu y)$)
are suitable.
In each of these cases, the constant input $u\equiv0$ makes \eqref{E:system_depart} unobservable.
Moreover,  $h(x) = J_0(\mu|x|)-1$ and $h(x)=|x|$ are radially symmetric but $h(x) = J_2(\mu|x|)\cos(2\angleS)$ is not.
If $h(x) = |x|$, then \eqref{E:system_depart} is a subcase of system~\eqref{E:system_rot}.
\end{xmpl}

\begin{rmrk}\label{rem:gr}
According to the Gelfand--Raïkov theorem,
the finite linear combinations of pure positive-type functions
(functions of the form $(x, \vartheta)\mapsto\psX{\rep_\mu(x, \vartheta)\xi}{\xi}$, where $\mu>0$ and $\xi\in\XX$)
is dense for the uniform convergence on compact sets, in the continuous bounded
complex-valued functions on $\GG$.
Hence, the set of functions
$(\ray \cos\angleS, \ray \sin\angleS) \mapsto
\sum_{\ell\in I_1} \sum_{k\in I_2} c_k J_k(\mu_\ell \ray) e^{-ik\angleS}
$, where $I_1$ and $I_2$ are finite subsets of $\Z$,
$\mu_\ell>0$ and $c_k\in\C$,
is dense for the uniform convergence on compact sets of $\R^2$, in the continuous bounded
complex-valued functions on $\R^2$.
In the examples of applications of our results,
we will focus on output maps $h$ of the form
$\hh(h(x)) = \sum_{k\in I} c_k J_k(\mu \ray) e^{-ik\angleS}$
for some $\hh:\R^m\to\C^\mm$ and some fixed $\mu>0$.
\end{rmrk}

\subsection{Explicit strong left-inverse}

Having in mind to use the strategy developed in the previous section, we now explicitly construct a strong left-inverse $\inv$ of $\plong$ defined in \eqref{E:deftau} over some compact set.
With Corollary \ref{cor:conv_faible_forte}, we already know that a strong left-inverse $\inv$ exists. However, we would like to give an explicit expression. This can be done by employing the relationship between Bessel functions of the first kind given in \eqref{Bessel_def} and the embedding $\plong$, as shown in equation \eqref{eq_Bessel}.

Indeed, let $j_1$ denote the first zero of $J_1'$.
Then $J_1$ is increasing over $[0, j_1]$. Denote $J_1^{-1}$ its inverse over $[0, j_1]$.
Let $\Phi:\C\ni x_1+ix_2\mapsto(x_1, x_2)\in\R^2$ be the canonical bijection.
Let $\jj\in(0, j_1)$.
For all $\zeta\in\C$, let
\begin{equation}\label{E:def_f}
    \invf(\zeta) = \begin{cases}
    0
    & \text{if } \zeta=0\\
    \Phi\left(\frac{i\bar{\zeta}}{\mu |\zeta|} J_1^{-1}(|\zeta|)\right) 
    & \text{if } 0<|\zeta|\leq J_1(\jj)\\
    \Phi\left(\frac{i\bar{\zeta}}{\mu |\zeta|} j_1\right)
    & \text{if } |\zeta|\geq J_1(j_1)
    \end{cases}
\end{equation}
If $J_1(\jj)< |\zeta| < J_1(j_1)$,
define $\invf(\zeta)$ such that $\invf$ is continuously differentiable and globally Lipschitz over $\C$.
Denote by $\Lf$ its Lipschitz constant.
Let $e_1\in \XX$ be defined by $e_1(\varS) = e^{i\varS}$ for all $\varS\in\S^1$.
Let
\begin{equation}\label{E:definv}
\displaystyle
\begin{array}{lrcl}
\inv: & \XX & \longrightarrow & \R^2 \\
    & \xi & \longmapsto & \invf(\psX{\xi}{e_1})
\end{array}
\end{equation}

\begin{lmm}\label{lem:explicit_pi}
The map $\inv$ is a strong left-inverse of $\plong$ over $\bar{B}_{\R^2}(0, \frac{\jj}{\mu})$.
\end{lmm}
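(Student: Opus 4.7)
The plan is to exploit the factorization $\inv = \invf \circ \qform$, where $\qform \colon \XX \to \C$ is the bounded linear functional $\qform(\xi) = \psX{\xi}{e_1}$. Since $\invf$ was constructed to be globally Lipschitz on $\C$ with constant $\Lf$, if I can show that $\inv \circ \plong$ is the identity on $\bar{B}_{\R^2}(0, \jj/\mu)$, then the strong left-inverse inequality \eqref{E:ps_inv} will follow by a single application of the Lipschitz bound, with $q = 1$, $\qform$ as above, and the $\Kinf$ function $\rho^*(s) = \Lf s$.

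The first step is the identity check $\inv \circ \plong = \mathrm{Id}$ on the ball. Writing $x = (\ray\cos\angleS, \ray\sin\angleS)$ with $\ray \in [0, \jj/\mu]$, I would specialize the Fourier formula \eqref{eq_Bessel} at $k = 1$ to obtain $\qform(\plong(x)) = i\, J_1(\mu\ray)\, e^{-i\angleS}$. Because $J_1$ is strictly increasing on $[0, j_1]$ and $\mu\ray \le \jj < j_1$, we have $|\qform(\plong(x))| = J_1(\mu\ray) \in [0, J_1(\jj)]$, so that $\qform(\plong(x))$ lands in the middle branch of \eqref{E:def_f}. A direct computation using $\bar{\zeta}/|\zeta| = -i\, e^{i\angleS}$ and $J_1^{-1}(J_1(\mu\ray)) = \mu\ray$ then yields $\invf(\qform(\plong(x))) = \Phi(\ray\, e^{i\angleS}) = x$, while the case $\ray = 0$ is handled by the first branch of \eqref{E:def_f} since $\plong(0) = \one$ and $\psX{\one}{e_1} = 0$.

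The second step is immediate. For any $x \in \bar{B}_{\R^2}(0, \jj/\mu)$ and any $\xi \in \XX$, the Lipschitz property of $\invf$ together with the linearity of $\qform$ give
$$
|\inv(\xi) - x|
= |\invf(\qform(\xi)) - \invf(\qform(\plong(x)))|
\le \Lf\, |\qform(\xi - \plong(x))|,
$$
which is precisely the inequality \eqref{E:ps_inv} with $\rho^*(s) = \Lf s$, a function that is manifestly of class $\Kinf$.

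I do not expect any real obstacle here, as the piecewise definition of $\invf$ in \eqref{E:def_f} was crafted precisely for this purpose: the interpolation on the intermediate annulus $J_1(\jj) < |\zeta| < J_1(j_1)$ is introduced only to extend $\invf$ to a globally Lipschitz function on all of $\C$, which is what makes the inequality valid for \emph{arbitrary} $\xi \in \XX$ rather than only for $\xi$ close to $\plong(x)$. The only subtlety is to ensure the correct branch of the piecewise definition is reached when evaluating $\invf \circ \qform \circ \plong$, and this is guaranteed by the radial restriction $\ray \le \jj/\mu$ combined with the monotonicity of $J_1$ on $[0, j_1]$.
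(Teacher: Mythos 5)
Your proof is correct and follows essentially the same route as the paper's: verify $\inv\circ\plong=\mathrm{Id}$ on the ball via the Bessel identity \eqref{eq_Bessel} at $k=1$ (landing in the middle branch of \eqref{E:def_f}), then conclude by the global Lipschitz property of $\invf$ applied to the linear form $\psX{\cdot}{e_1}$, giving \eqref{E:ps_inv} with $\rho^*(s)=\Lf s$. The only difference is expository (you make the factorization $\inv=\invf\circ\qform$ and the branch selection explicit, which the paper leaves implicit).
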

\begin{proof}
Set $\Kx = \bar{B}_{\R^2}(0, \frac{\jj}{\mu})$.
According to \eqref{E:def_f}, $\phi(\xi)\in\Kx$ for all $x\in\Kx$.
Let $x=(\ray \cos\angleS, \ray \sin\angleS)$ in $\Kx$.
Then, with \eqref{eq_Bessel},
\begin{align*}
    \psX{\plong(x)}{e_1}
    = ie^{-i\angleS}J_1(\mu \ray)
    \in\B_{\C}\left(0, J_1\left(\jj\right)\right).
\end{align*}
Hence $\inv(\plong(x))=\Phi(re^{i\angleS})=x$.
Let $\xi\in\XX$.
We have
\begin{align*}
    |\inv(\xi)-x|
    = |\inv(\xi)-\inv(\plong(x))|
    = |\invf(\psX{\xi}{e_1})-\invf(\psX{\plong(x)}{e_1})|
    \leq \Lf |\psX{\xi - \plong(x)}{e_1}|.
\end{align*}
Hence $\inv$ is a strong left-inverse of $\plong$ over $\Kx$.
\end{proof}

\begin{rmrk}
Letting $\mu$ tend towards $0$, the domain of the left-inverse tends towards $\R^2$. This will be of use to achieve semi-global stabilization.
\end{rmrk}

\subsection{Well-posedness and boundedness of trajectories}

We now check the well-posedness of the closed-loop system \eqref{E:system_stab_infinie}.
In a second step, since $\inv(\xi)$ is meaningful only if $|\psX{\xi}{e_1}|\leq J_1(\jj)$, we show that by  selecting the (perturbation) parameter $\delta$ sufficiently small, $\etath$ remains in this domain  along the  trajectories of the closed-loop system. 

\begin{lmm}\label{lem:well}
For all $\mu, \alpha, \delta
,\dt
>0$
and all $x_0, \xhat_0$ in $\bar{B}_{\R^2}(0, \frac{\jj}{\mu})$,
the system~\eqref{E:system_stab_infinie}
(with $\inv$ as in Lemma~\ref{lem:explicit_pi}) admits a unique solution
$(x, \etath)\in C^0(\R_+, \R^2\times \D)$
such that
$x(0) = x_0$ and $\etath(0) = \plong(\xhat_0)$.
Moreover, for all $k\in\N$,
$(x, \etath)\vert_{[t_k, t_{k+1}]}\in C^1([t_k, t_{k+1}], \R^2\times \XX)$.
\end{lmm}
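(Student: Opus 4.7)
The proof proceeds by induction on $k\in\N$, building the solution on each sampling interval $[t_k,t_{k+1}]$ on which the feedback $u$ is constant. For the initial datum, the explicit formula \eqref{E:deftau} shows that $\plong(\xhat_0):\varS\mapsto e^{i\mu(\xhat_0\cdot(\cos\varS,\sin\varS))}$ is a smooth function on $\S^1$, hence lies in $\D=H^1(\S^1,\C)$. Since $\inv$ is continuous on $\XX$ by Lemma~\ref{lem:explicit_pi} and $\psn$ is continuous for $\|\cdot\|_\XX$, the feedback value $\phi(\inv(\etath(t_k)))+\delta\psn^2(\etath(t_k)-\plong(0))$ is a well-defined real number as soon as $\etath(t_k)\in\XX$ has been constructed.

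Suppose now that $(x,\etath)$ has been built on $[0,t_k]$ with the stated regularity and with $\etath(t_k)\in\D$, and let $u_k=u(t_k)$. On $[t_k,t_{k+1})$ the control is constant equal to $u_k$, so the $x$-equation $\dot x = Ax+bu_k$ is a linear ODE in $\R^2$ and admits a unique $C^1$ extension to $[t_k,t_{k+1}]$. For the observer, the key device is to decompose $\etath=\etat+\eps$ with $\etat(t)=\plong(x(t))$ and $\eps=\etath-\etat$. Using that $\plong$ is an embedding and that $\hh(h(x))=\mC\plong(x)=\mC\etat$, a direct computation reduces the observer equation to the \emph{homogeneous} pair
\begin{equation*}
\dot\etat = \mA(u_k)\etat,\qquad \dot\eps = (\mA(u_k)-\alpha\mC^*\mC)\eps,
\end{equation*}
with initial data $\etat(t_k)=\plong(x(t_k))\in\D$ (smooth on $\S^1$) and $\eps(t_k)=\etath(t_k)-\plong(x(t_k))\in\D$.

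The operator $\mA(u_k)$ is skew-adjoint and generates a strongly continuous unitary group on $\XX$ leaving $\D$ invariant; since $\alpha\mC^*\mC\in\lin(\XX)$ is bounded, $\mA(u_k)-\alpha\mC^*\mC$ generates a $C_0$-semigroup on $\XX$ with the same domain $\D$, by the bounded perturbation theorem recalled after \eqref{eps} via \cite[Chapter 5, Theorem 2.3]{Pazy}. Standard semigroup theory then yields $\etat,\eps\in C^0([t_k,t_{k+1}],\D)\cap C^1([t_k,t_{k+1}],\XX)$, and hence the same regularity for $\etath=\etat+\eps$. In particular $\etath(t_{k+1})\in\D$, so the induction propagates; concatenation over all intervals gives $(x,\etath)\in C^0(\R_+,\R^2\times\D)$ with the claimed piecewise $C^1$ bound. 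Uniqueness is inherited from uniqueness for the two homogeneous Cauchy problems on each interval, combined with the deterministic update of $u(t_{k+1})$ from $\etath(t_{k+1})$.

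The main technical point is ensuring that $\etath$ stays in $\D$ across sampling times so that the induction can be iterated indefinitely. The decomposition $\etath=\etat+\eps$ is what makes this routine: it dispenses with the need to control an inhomogeneous term driven by $\hh(y(t))$ (which would require extra regularity of $t\mapsto h(x(t))$) and reduces everything to two autonomous linear Cauchy problems whose generators are known to preserve $\D$.
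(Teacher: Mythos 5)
Your proof is correct and follows essentially the same route as the paper's: both reduce the closed-loop system to the homogeneous pair $\dot\etat=\mA(u_k)\etat$, $\dot\eps=(\mA(u_k)-\alpha\mC^*\mC)\eps$, invoke semigroup generation plus the bounded perturbation theorem, and induct over the sampling intervals to get existence, uniqueness, and the piecewise $C^1$ regularity from the initial data lying in $\D$. The only (cosmetic) difference is that you integrate the $x$-ODE directly and set $\etat=\plong(x)$, whereas the paper works with $(\etat,\eps)$ throughout and recovers $x=\inv(\etat)$ at the end.
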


\begin{proof}
Let $\Kx = \bar{B}_{\R^2}(0, \frac{\jj}{\mu})$ and $x_0$, $\xhat_0$ in $\Kx$.
Set $\etat_0 = \plong(x_0)\in\dom$ and $\eps_0 = \plong(\xhat_0) - \plong(x_0)\in\dom$.
The well-posedness of system~\eqref{E:system_stab_infinie}
is equivalent to the well-posedness of the following system:
\begin{equation}\label{E:syst_hyp}
\left\{
\begin{aligned}
&\dot{\etat}= \mA(u)\etat
\\
&\dot{\eps}= (\mA(u) - \alpha\mC^*\mC)\eps
\\
&u(t_k) = \phi(\inv(\etat(t_k^-)+\eps(t_k^-))) + \delta\psn^2(\etat(t_k^-)+\eps(t_k^-)-\one)\\
&u(t) = u(t_k),\qquad t\in[t_k,t_{k+1})
\\
&\etat(0)=\etat_0,\ \eps(0)=\eps_0
\end{aligned}
\right.
\end{equation}
where $\mA(u) = -\frac{\partial}{\partial \varS} + i\mu u \sin$ and $\mC\in \lin(\XX, \C^\mm)$.
For all $u\in\R$, $\mA(u)$ is the generator of a strongly continuous semigroup on $\XX$.
Since, $\mC$ is bounded, according to \cite[Chapter 3, Theorem 1.1]{Pazy}, $\mA(u)-\alpha \mC^*\mC$ is also the generator of a strongly continuous semigroup on $\XX$.
Thus, reasoning by induction on $k\in\N$,
\eqref{E:syst_hyp} admits a unique solution $(\etat, \eps)\in C^0(\R_+, \XX^2)$.
Moreover, since $z_0, \eps_0\in\doma$, $(\etat, \eps)\in C^0(\R_+, \doma^2)$ and
is continuously differentiable (with values in $\XX^2$) on $[t_k, t_{k+1}]$ for all $k\in\N$.
Setting $x=\inv(\etat)$ and $\zhat=z+\eps$, we get the statement.
\end{proof}

Now that existence and uniqueness of solutions of \eqref{E:system_stab_infinie} have been proved,
let us show that the trajectories are bounded.

\begin{lmm}\label{lem:borne}
For all $\mu>0$, all $\Rc\in(0, \frac{\jj}{\mu})$ and all $\Rb\in(0, \Rc)$, there exist $\Ra\in(0, \Rb)$,
$\delta_0>0$ 
and $\dt_0>0$
such that 
for all $x_0, \xhat_0$ in $\B_{\R^2}(0, \Ra)$,
all $\alpha>0$,
all $\delta\in(0, \delta_0)$
and all $\dt\in(0, \dt_0)$,
the unique solution
$(x, \etath)\in C^0(\R_+, \R^2\times\doma)$
of \eqref{E:system_stab_infinie}
such that
$x(0) = x_0$ and $\etath(0) = \plong(\xhat_0)$
satisfies
$|x(t)| < \Rb$,
$|\psX{\etath(t)}{e_1}| < J_1(\mu\Rc)$
and $|\inv(\etath(t))| < \Rc$
for all $t\in\R_+$.
\end{lmm}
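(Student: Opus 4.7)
My plan proceeds by forward invariance of the set defined by the three bounds, combined with two simple observations: the third bound $|\inv(\etath)| < \Rc$ follows automatically from the second via the monotonicity of $J_1^{-1}$ on $[0, J_1(\jj)]$ and the definition \eqref{E:def_f} of $\invf$ (recall $\Rc < \jj/\mu$, so $\mu\Rc < \jj < j_1$); and the Bessel-coefficient bound follows from the first together with a uniform bound on $\|\eps\|_\XX$. The three key ingredients are the non-increase of $\|\eps(t)\|_\XX$ from \eqref{E:eps_decroit_infinie}, the Lipschitz estimate $\|\plong(x) - \plong(y)\|_\XX \leq \mu |x-y|$ (which follows from $|e^{ia} - e^{ib}| \leq |a-b|$ applied pointwise in the expression \eqref{E:deftau}), and a quadratic Lyapunov function for the nominal closed-loop $\dot{x} = (A+bK)x$.

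\textbf{Step 1 (Observer-error reservoir).} Since $|x_0|, |\xhat_0| \leq \Ra$, the Lipschitz estimate yields $\|\eps(0)\|_\XX = \|\plong(\xhat_0) - \plong(x_0)\|_\XX \leq 2\mu\Ra$, and by \eqref{E:eps_decroit_infinie}, $\|\eps(t)\|_\XX \leq 2\mu\Ra$ for all $t \geq 0$ and any $\alpha > 0$.

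\textbf{Step 2 (Bessel bound from control of $|x|$).} By \eqref{eq_Bessel}, $|\langle \etat(t), e_1 \rangle| = J_1(\mu |x(t)|)$, so
\[
|\langle \etath(t), e_1 \rangle| \leq J_1(\mu |x(t)|) + \|\eps(t)\|_\XX \leq J_1(\mu |x(t)|) + 2\mu \Ra.
\]
I shrink $\Ra \in (0, \Rb)$ enough that $J_1(\mu \Rb) + 2\mu \Ra < J_1(\mu \Rc)$, which is feasible since $J_1$ is strictly increasing on $[0, \jj] \supset [0, \mu \Rc]$. Consequently, whenever $|x(t)| < \Rb$, both $|\langle \etath(t), e_1 \rangle| < J_1(\mu \Rc)$ and $|\inv(\etath(t))| < \Rc$ hold automatically.

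\textbf{Step 3 (Lyapunov control of $|x|$).} Pick $K$ so that $A+bK$ is Hurwitz, take $P > 0$ satisfying $P(A+bK) + (A+bK)'P \leq -2\Id_{\R^2}$, and set $V(x) = x'Px$. Since $u \equiv u(t_k)$ on $[t_k, t_{k+1})$,
\[
\dot V(x(t)) = -2|x(t)|^2 + 2 x(t)'Pb\bigl(u(t_k) - K x(t)\bigr),
\]
and I split the mismatch as
\[
u(t_k) - K x(t) = K\bigl(\inv(\etath(t_k^-)) - x(t_k)\bigr) + K\bigl(x(t_k) - x(t)\bigr) + \delta\,\psn^2(\etath(t_k^-) - \one).
\]
By Lemma~\ref{lem:explicit_pi} and Step 1, the first term is bounded by $|K|\Lf \|\eps(t_k)\|_\XX \leq 2|K|\Lf \mu \Ra$; the perturbation term satisfies $\psn^2(\etath - \one) \leq \nu^2 \mu^2 (|x(t_k)| + 2\Ra)^2$ thanks to \eqref{E:def_nu} and the Lipschitz property of $\plong$; the sample-and-hold term is at most $|K|\dt(|A|\Rb + |b|\bornu)$ given an a priori bound $|u(t_k)| \leq \bornu$. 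Each of the three contributions shrinks as $\Ra, \delta, \dt \to 0$.

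\textbf{Step 4 (Bootstrapping).} Pick $r > 0$ small enough that $\{V \leq r\} \subset B_{\R^2}(0, \Rb)$, then shrink $\Ra$ once more so $V(x_0) \leq r/2$ whenever $|x_0| \leq \Ra$. Under the \emph{a priori} assumption $|x(s)| < \Rb$ on an interval, Steps~1--3 yield a uniform input bound $|u(t_k)| \leq \bornu$ depending only on $|K|, \Lf, \Rb, \Rc, \delta_0, \Ra$, and then $\delta_0, \dt_0$ can be chosen so small that the three perturbations are dominated by $-2|x|^2$ on the annulus $\{r/2 \leq V \leq r\}$, giving $\dot V < 0$ there. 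A standard open-closed continuation argument applied to the set $\{t \geq 0 \mid V(x(s)) \leq r \text{ for all } s \in [0, t]\}$ extends forward invariance to all $t \geq 0$, yielding $|x(t)| < \Rb$, whereupon Step~2 supplies the two remaining bounds.

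The main subtlety is the circularity in Step~4: the admissible sample-and-hold period $\dt_0$ depends on a uniform input bound, which itself presupposes $|\inv(\etath)| < \Rc$, which in turn presupposes $|x| < \Rb$. The bootstrapping breaks this loop by first deriving $\dot V < 0$ under the \emph{open} assumption $|x| < \Rb$ and then closing it by continuity of $V$ along trajectories; no other part of the argument is quantitatively delicate.
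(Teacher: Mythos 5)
Your proposal is correct and follows essentially the same route as the paper: dissipativity of the error system gives an a priori bound on $\norm{\eps}$, the control mismatch $u(t_k)-Kx(t)$ is split into the same three terms (left-inverse error, sample-and-hold error, $\delta$-perturbation), each made small by shrinking $\Ra$, $\delta_0$, $\dt_0$, and the bounds on $|\psX{\etath}{e_1}|$ and $|\inv(\etath)|$ are then deduced from $|x|<\Rb$ by the triangle inequality and monotonicity of $J_1$. The only cosmetic difference is that you control $|x|$ via a quadratic Lyapunov function and an annulus/continuation argument where the paper uses a variation-of-constants estimate with a first-crossing-time contradiction, and you bound $\norm{\eps_0}$ by the Lipschitz constant $\mu$ of $\plong$ where the paper computes $\norm{\plong(x_0)-\one}=\sqrt{2(1-J_0(\mu|x_0|))}$ explicitly; both variants are equivalent here.
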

\begin{proof}
For any bounded $u:\R^+\to \R$, we can decompose the dynamics of $x$ as 
$\dot x = (\rot+bK)x + b(u - Kx)$. Thus for any initial condition $x_0\in \R^2$, the variation of constants formula yields
$$
x(t)=\e^{t(\rot+bK)}x_0+\int_0^t\e^{(t-s)(\rot+bK)}b(u(s) - Kx(s))\diff s.
$$
Since $\rot+bK$ is Hurwitz, $\|\e^{t(\rot+bK)}\|\leq 1$ and there exists  constants $\gamma_1,\gamma_2>0$ such that $\left|\e^{t(\rot+bK)}b\right|\leq \gamma_1\e^{-\gamma_2 t}$ for all $t\geq 0$. Then there exists $M>0$ (depending only on $A$, $b$ and $K$) such that 
\begin{equation}\label{E:varcons}
|x(t)| \leq |x_0| + M\sup_{s\in[0, t]} |u(s) - Kx(s)|\qquad \forall t\in\R_+.
\end{equation}
We use this expression to bound $|x|$.
Recall $\kappa = |K|$.
Denote by $\Linv$ the global Lipschitz constant of $\inv$.
Since $J_0(0)=1$, we can pick  $\Ra \in(0, \Rb)$, $\delta_0>0$
and $\dt_0>0$
small enough so that the following inequalities are satisfied (with $\nu$ defined by \eqref{E:def_nu}):
\begin{align}
&
\Ra + M\left(
2\kappa \Linv\sqrt{2(1-J_0(\mu R_0))} + 16\nu^2\delta_0
+ \kappa \dt_0 (\Rb + 3\kappa\Linv + 16\nu^2\delta_0)
\right)
< \Rb,
\label{Ineq1}
\\
&2\sqrt{2(1-J_0(\mu \Ra))} +  J_1(\mu \Rb) <  J_1(\mu \Rc).
\label{Ineq2}
 \end{align}

Let $\delta\in(0, \delta_0)$,
$\dt\in(0, \dt_0)$,
$x_0, \xhat_0\in\B_{\R^2}(0, \Ra)$,
$(x, \etath)$ as in Lemma~\ref{lem:well},
$\etat = \plong(x)$, $\eps = \etath-\etat$,
$t_k=k\dt$ for $k\in\N$,
$u(t_k) = \phi(\inv(\zhat(t_k^-))) + \delta\psn^2(\zhat(t_k^-)-\one)$
and $u(t) = u(t_k)$ for $t\in[t_k, t_{k+1})$.
Let $k\in\N$ and $t\in[t_k, t_{k+1})$.
The expression of $u$ allows to split $|u- Kx|$ into  three parts (recall $\inv(z(t_k))=x(t_k)$):
\begin{align}\label{E:control_maj}
    |  u(t) - Kx(t) |
    \leq \kappa |\inv(\etath(t_k)) - \inv(\etat(t_k))|
    + \kappa|x(t_k) - x(t)| + \delta \psn^2(\etath(t_k)-\one).
\end{align}
We bound these terms going right to left. First,
\begin{align}
    \psn^2(\etath(t_k)-\one)
    &\leq\nu^2\norm{\etath(t_k)-\one}^2
    \tag{by \eqref{E:def_nu}}\\
    &\leq\nu^2\left(\norm{\eps(t_k)} + \norm{\etat(t_k)-\one}\right)^2\tag{by triangular inequality}\\
    &\leq\nu^2\left(\norm{\eps_0} + 2\right)^2\tag{since $\norm{\eps}$ is non-increasing and $\norm{\plong(x(t))}=1$}
\end{align}
Since $\norm{\eps_0}\leq\norm{\hat z_0}+\norm{z_0}=1+1$, this yields 
\begin{equation}\label{E:term_right}
   \psn^2(\etath(t_k)-\one) \leq 16\nu^2.
\end{equation}
This allows to give a bound on $u(t_k)$:
$$
   |u(t_k)|
   \leq \kappa|\inv(\zhat(t_k))|+\delta\psn^2(\etath(t_k)-\one)
   \leq \kappa\Linv(\norm{\eps(t_k)}+\norm{z(t_k)})+ 16\nu^2\delta\leq 3\kappa\Linv + 16\nu^2\delta.
$$
Then, with another variation of constants, we obtain (since $|b|=1$, and $\Delta$ small enough)
\begin{equation}\label{E:term_middle}
    |x(t_k) - x(t)|\leq |(\e^{(t-t_k)A}-\Id_{\R^2})x(t_k)|+|u(t_k)|(t-t_k)
    \leq \dt \left(|x(t_k)| + 3\kappa\Linv + 16\nu^2\delta \right).
\end{equation}
Finally, note that for any $x_0\in B_{\R^2}(0,R_0)$,
$$
    \norm{\plong(x_0)-\one}
    = \left(\norm{\plong(x_0)}^2+1 - 2\psX{\plong(x_0)}{1}\right)^{\frac{1}{2}}
    = \sqrt{2(1-J_0(\mu |x_0|))}
    \leq \sqrt{2(1-J_0(\mu R_0))}.
$$
Then 
$$
    \norm{\eps_0} \leq \norm{\etath_0-\one} + \norm{\etat_0-\one}
    \leq 2\sqrt{2(1-J_0(\mu R_0))},
$$
which implies
\begin{equation}\label{E:term_left}
    |\inv(\etath(t_k)) - \inv(\etat(t_k))|
    \leq \Linv \norm{\eps(t_k)}
    \leq \Linv \norm{\eps_0}
    \leq 2\Linv\sqrt{2(1-J_0(\mu R_0))}. 
\end{equation}

In conclusion, $\kappa\times\eqref{E:term_left}+\kappa\times\eqref{E:term_middle}+\delta\times\eqref{E:term_right}$ implies that \eqref{E:control_maj} becomes
\begin{align}
    | u(t) - Kx(t)| &\leq 2\kappa \Linv\sqrt{2(1-J_0(\mu R_0))} 
    + \kappa \dt \left(|x(t_k)| + 3\kappa\Linv + 16\nu^2\delta\right)
    + 16\nu^2\delta
    \label{Ineq_e}
\end{align}
Assume for the sake of contradiction that there exists $t\in\R_+$ such that $|x(t)|>\Rb$.
Let $T = \min\{t\in\R_+\mid |x(t)|=\Rb\}$.
Then \eqref{E:varcons} at $t=T$ combined with \eqref{Ineq1} and \eqref{Ineq_e}  yields
\begin{align*}
\Rb \leq \Ra + M\left(
2\kappa \Linv\sqrt{2(1-J_0(\mu R_0))} 
+ 
\kappa \dt (\Rb + 3\kappa\Linv + 16\nu^2\delta)
+ 
16\nu^2\delta
\right)
< \Rb,
\end{align*}
which is a contradiction.
Thus
$|x(t)|<R_1$ for all $t\in\R_+$.
Furthermore, for all $t\in\R_+$,
\begin{align*}
    \left|\psX{\etath(t)}{e_1}\right| &\leq |\psX{\eps(t)}{e_1}| + |\psX{\etat(t)}{e_1}|\\
    &\leq \norm{\eps(t)} + |\psX{\plong(x(t))}{e_1}|\\
    &\leq \norm{\eps_0} +  J_1(\mu |x|)\\
    &\leq 2\sqrt{2(1-J_0(\mu \Ra))} + J_1(\mu \Rb)\\
    &<  J_1(\mu \Rc).
\end{align*}
Thus, \eqref{Ineq2} yields
$|\psX{\etath(t)}{e_1}| < J_1(\mu\Rc)$
for all $t\in\R_+$.
Finally, since $J_1(\mu\Rc) < J_1(\jj)$,
$|\inv(\etath(t))| = |\invf(\psX{\etath(t)}{e_1})|
= \frac{1}{\mu} J_1^{-1}(\psX{\etath(t)}{e_1})
\leq \Rc.
$
\end{proof}

For any compact set of initial conditions, taking 
$\mu$, $\delta$ and $\dt$
sufficiently small ensures that trajectories are bounded. This is shown in the following corollary.

\begin{crllr}\label{cor:semiglob}
For all $\Ra>0$, there exist
$\mu_0, \delta_0, \dt_0>0$ 
and $\Rc>\Rb>\Ra$ such that
for all $x_0, \xhat_0$ in $\B_{\R^2}(0, \Ra)$,
all $\mu\in(0, \mu_0)$, all $\delta\in(0, \delta_0)$, all $\dt\in(0, \dt_0)$,
and all $\alpha>0$,
the unique solution
$(x, \etath)\in C^0(\R_+, \R^2\times \D)$
of \eqref{E:system_stab_infinie}
such that
$x(0) = x_0$ and $\etath(0) = \plong(\xhat_0)$
satisfies
$|x(t)| < \Rb$,
$|\psX{\etath(t)}{e_1}| < J_1(\mu\Rc)$
and $|\inv(\etath(t))| < \Rc$
for all $t\in\R_+$.
\end{crllr}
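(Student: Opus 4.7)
The plan is to invoke Lemma~\ref{lem:borne} uniformly in $\mu$. Given $R_0 > 0$, I would first fix $R_2 > R_1 > R_0$ with margins to be specified, then choose $\mu_0 \in (0, j_1/R_2)$, so that for every $\mu \in (0, \mu_0)$ the strong left-inverse $\inv$ of Lemma~\ref{lem:explicit_pi} is well-defined on $\bar{B}_{\R^2}(0, R_2)$ and Lemma~\ref{lem:borne} becomes applicable with the common triple $(\mu, R_1, R_2)$. The remaining task is to verify that $\delta_0, \Delta_0$ can actually be taken independent of $\mu \in (0, \mu_0)$.

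The main obstacle is that the Lipschitz constant $\Lf$ of the strong left-inverse diverges as $1/\mu$ when $\mu \to 0^+$, as is clear from the explicit form \eqref{E:def_f} of $\invf$ near zero. This would naively ruin the uniformity of the estimates in Lemma~\ref{lem:borne}. Two refinements absorb this divergence. First, using $J_0(x) = 1 - x^2/4 + O(x^4)$ and the asymptotic $J_1^{-1}(t) = 2t + O(t^3)$, one obtains a constant $C > 0$ (depending only on $j, j_1$) with $\mu \Lf \leq C$ and $\sqrt{2(1-J_0(\mu R_0))} \leq \mu R_0/\sqrt{2}$ whenever $\mu R_2 \leq j_1$, so that the product $\Lf \sqrt{2(1-J_0(\mu R_0))} \leq C R_0/\sqrt{2}$ is bounded uniformly in $\mu \in (0, \mu_0)$. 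Second, the a priori estimate $|u(t_k)| \leq 3\kappa \Linv + 16\nu^2 \delta$ used inside the proof of Lemma~\ref{lem:borne}---which itself blows up with $1/\mu$---must be replaced by the sharper a posteriori bound $|u(t_k)| \leq \kappa|\inv(\etath(t_k))| + 16\nu^2 \delta \leq \kappa R_2 + 16\nu^2 \delta$, valid as long as $|\inv(\etath(t_k))| \leq R_2$. I would incorporate this last condition into a continuation argument carried out simultaneously on the three quantities $|x(t)|$, $|\inv(\etath(t))|$, and $|\psX{\etath(t)}{e_1}|$.

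With these refinements in place, the analog of inequality \eqref{Ineq1} becomes
\begin{equation*}
R_0 + M\bigl(\sqrt{2}\,\kappa C R_0 + 16\nu^2 \delta_0 + \kappa \Delta_0 (R_1 + \kappa R_2 + 16\nu^2\delta_0)\bigr) < R_1,
\end{equation*}
which is satisfied by choosing $R_1 > R_0(1 + \sqrt{2}\,M\kappa C)$ with a margin (the factor $M\kappa C$ is controllable because $\kappa$ can be taken arbitrarily small), then $\delta_0, \Delta_0 > 0$ small. The analog of \eqref{Ineq2} reduces, for $\mu_0$ small, to $2\sqrt{2}\,R_0 + R_1 < R_2$, which I would meet by choosing $R_2 > R_1 + 2\sqrt{2}\,R_0$ with a margin and then further shrinking $\mu_0$ so that the full Bessel inequality $2\sqrt{2(1-J_0(\mu R_0))} + J_1(\mu R_1) < J_1(\mu R_2)$ holds for every $\mu \in (0, \mu_0)$. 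Once these $\mu$-independent $R_1, R_2, \delta_0, \Delta_0$ are fixed, the contradiction/continuation argument of Lemma~\ref{lem:borne} applies verbatim and yields the desired uniform bounds.
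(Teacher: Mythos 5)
Your proposal is correct and follows the same overall architecture as the paper's proof: set $\Rb=\beta_1\Ra$ and $\Rc=\beta_2\Ra$, use the expansions $J_0(r)=1-\frac{r^2}{4}+o(r^2)$ and $J_1(r)=\frac{r}{2}+o(r)$ to verify that \eqref{Ineq2} holds for all $\mu\in(0,\mu_0)$ once $\beta_2>2\sqrt{2}+\beta_1$, and then rerun the argument of Lemma~\ref{lem:borne}. The genuine added value of your write-up is the treatment of \eqref{Ineq1}. The paper merely asserts that $\mu_0,\delta_0,\dt_0$ can be chosen so that \eqref{Ineq1} holds for all $\mu\in(0,\mu_0)$, which is delicate as written: since $\Linv$ scales like $1/\mu$ (because of the prefactor $\frac{1}{\mu}$ in \eqref{E:def_f}), the term $\kappa\dt_0\,(3\kappa\Linv)$ appearing in \eqref{Ineq1} blows up as $\mu\to0^+$ for any fixed $\dt_0>0$. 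You correctly isolate the two places where $\Linv$ enters: the product $\Linv\sqrt{2(1-J_0(\mu\Ra))}$ is uniformly bounded because the two factors compensate (which is what the paper implicitly relies on), whereas the a priori bound $|u(t_k)|\le 3\kappa\Linv+16\nu^2\delta$ must be replaced by the a posteriori bound $\kappa\Rc+16\nu^2\delta$, valid as long as $|\inv(\etath(t_k))|\le\Rc$, and this must be closed by a continuation argument run simultaneously on $|x|$, $|\psX{\etath}{e_1}|$ and $|\inv(\etath)|$ (the paper's ordering, which bounds $|x|$ first and only then $|\inv(\etath)|$, has to be reorganized for this). This is a sound repair that makes the uniformity in $\mu$ rigorous, and your remaining choices ($\Rb$ with a margin over $\Ra(1+\sqrt{2}M\kappa C)$, $\Rc>\Rb+2\sqrt{2}\Ra$ with a margin, then $\delta_0,\dt_0$ small, and $\mu_0$ small enough that $\mu_0\Rc<\jj$) are consistent with the paper's.
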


\begin{proof}
Let $\beta_2> \beta_1 >1$ to be fixed later, and let $\Rb = \beta_1\Ra$ and $\Rc = \beta_2\Ra$.
Then there exist $\mu_0, \delta_0,\dt_0>0$ small enough such that
\eqref{Ineq1} holds 
for all $\mu\in(0, \mu_0)$,
$\delta\in(0, \delta_0)$ and $\dt\in(0, \dt_0)$,
Recall the following asymptotic expansions of the Bessel functions of the first kind at 0:
\begin{align*}
    J_0(r) = 1 - \frac{r^2}{4} + \oo(r^2),\qquad J_1(r) = \frac{r}{2} + \oo(r).
\end{align*}
Then for all $\mu>0$,
\begin{align*}
2\sqrt{2(1-J_0(\mu\Ra))} +  J_1(\mu \Rb)
= \mu\Ra \left(\sqrt{2} +  \frac{\beta_1}{2}\right) + \oo(\mu),\
J_1(\mu \Rc) = \mu\Ra\frac{\beta_2}{2} + \oo(\mu).
\end{align*}
Hence, if $\beta_2>2\sqrt{2}+\beta_1$, then there exists $\mu_0>0$ such that \eqref{Ineq2} holds for all $\mu\in(0, \mu_0)$.
Set $\beta_1 = 2$ and $\beta_2 = 2\sqrt{2}+3$.
Then there exist $\mu_0,
\delta_0, \dt_0>0
$
such that $\mu_0\Rc<\jj$ and \eqref{Ineq1} and \eqref{Ineq2} are satisfied for all 
$\mu\in(0, \mu_0)$,
$\delta\in(0, \delta_0)$ and $\dt\in(0, \dt_0)$.
Reasoning as in the proof of Lemma~\ref{lem:borne}, the result follows.
\end{proof}

\subsection{Observability analysis}\label{sec:obs_infinie}

To achieve state estimation in the embedded system in presence of an observability singularity at the target, we need to check that enough information is still accessible during the stabilization process. This takes the form of two last assumptions on the linear output map $\opc\in\linxy$ (obtained from the function $h$ in Assumption~\ref{ass:h}), reminding of the discussions in Section~\ref{sec:nc}, except in the new infinite-dimensional context.
The \emph{short time 0-detectability} assumption states that the measure distinguishes the target point for its neighbors.
The \emph{isolated observability singularity} assumption regards separation of the unobservable input $u\equiv0$  from other singular inputs of the infinite-dimensional system.
For each assumption, we discuss examples of suitable output maps $h$.
Following Remark~\ref{rem:gr}, we investigate the case where at least one of the components of the output map is in the linear span of a finite number of elements of the Hilbert basis.
This component is used to ensure the two observability properties.

\begin{assumption}[Short time 0-detectability]\label{ass:detec}
Let $u:\R_+\to\R$ be constant over $[t_k, t_{k+1})$ for all $k\in\N$, where $t_{k+1}-t_k = \dt$ is a positive constant.
Let $x$ be a solution of \eqref{E:system_depart}
bounded by $\frac{\jj}{\mu}$.
If there exists a subsequence $(t_{k_n})_{n\in\N}$ such that
$u(t_{k_n})\cvl{n\cv+\infty} 0$
and
$\opc\plong(x(t_{k_n}+t')) \cvl{n\cv+\infty} \opc\plong(0)$
for all $t'\in[0, \dt]$,
then
$x(t_{k_{n}}) \cvl{n\cv+\infty} 0$.
\end{assumption}

\begin{rmrk}
Assumption~\ref{ass:detec} implies the necessary Condition~\ref{hyp:distinguish}~(local).
Indeed, if $x$ is a solution of \eqref{E:system_depart} with $u=0$ and $h(x(t))=0$ for all $t\geq0$,
then for any positive increasing sequence $(t_n)_{n\in\N}\to+\infty$,
$u(t_n) = 0$ and $C\plong(x(t_n+t)) = \hh(0)$ for all $n\in\N$ and all $t\geq0$. Hence, according to Assumption~\ref{ass:detec}, $x(t_n)\to0$ and Condition~\ref{hyp:distinguish}~(local) is satisfied.
Moreover,
if $\hh$ has a continuous inverse in a neighborhood of $0$, then
Assumption~\ref{ass:detec} implies
(for piecewise constant inputs only)
the input/output-to-state stability
condition
(see \eg \cite{krichman2001input}), which states that any solution $x$ of \eqref{E:system_depart} such that $u(t)\to0$ and $y(t)\to0$ is such that $x(t)\to0$ as $t\to+\infty$. This condition has proved to be of interest in the context of output feedback stabilization.
\end{rmrk}

\begin{prpstn}[Outputs satisfying Assumption~\ref{ass:detec}]\label{prop:detec}

If one of the components of $\opc$ (seen as a $\mm$-tuple of linear forms on $\XX$) has the form $\psX{\cdot}{\zeta}$
where 
$\zeta = \sum_{p\in\sousz}c_pe_p\in\XX\setminus\{0\}$,
with $\sousz\subset\Z$ finite, then \eqref{E:system_depart} satisfies Assumption~\ref{ass:detec}.
\end{prpstn}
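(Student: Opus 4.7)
My plan is to show that any accumulation point $x^\star$ of the bounded sequence $(x(t_{k_n}))\subset \bar B_{\R^2}(0,\jj/\mu)$ must be zero; compactness then forces $x(t_{k_n}) \to 0$. Extract a sub-subsequence (still denoted $t_{k_n}$) along which $x(t_{k_n})\to x^\star=(r^\star\cos\theta^\star, r^\star\sin\theta^\star)$ with $r^\star\in[0,\jj/\mu]$. Since $u$ is constant equal to $u(t_{k_n})$ on $[t_{k_n},t_{k_n}+\dt]$ and $u(t_{k_n})\to 0$, continuous dependence of the linear flow of~\eqref{E:system_depart} on initial condition and constant input yields $x(t_{k_n}+t')\to e^{t'A}x^\star$ uniformly in $t'\in[0,\dt]$. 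Because $\plong:\R^2\to\XX$ is continuous and $\opc\in\lin(\XX,\C^\mm)$ is bounded, passing to the limit in the hypothesis gives
\[
\opc\,\plong(e^{t'A}x^\star) \;=\; \opc\,\plong(0), \qquad \forall\, t'\in[0,\dt].
\]

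The crucial step is to read this identity on the scalar component $\psX{\cdot}{\zeta}$. Using~\eqref{eq_Bessel} and the fact that $e^{t'A}x^\star$ has polar coordinates $(r^\star,\theta^\star+t')$,
\[
\psX{\plong(e^{t'A}x^\star)}{\zeta} \;=\; \sum_{p\in\sousz}\bar c_p\, i^p\, J_p(\mu r^\star)\, e^{-ip(\theta^\star+t')},
\]
while $\psX{\plong(0)}{\zeta}=\bar c_0$ (with the convention $c_0=0$ whenever $0\notin\sousz$). As a function of $t'$ the left-hand side is an entire trigonometric polynomial; coinciding with the constant $\bar c_0$ on the interval $[0,\dt]$, it must coincide with it on all of $\R$. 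Linear independence of the characters $\{e^{-ipt'}\}_{p\in\Z}$ then forces, for every $p\in\sousz$,
\[
\bar c_p\, i^p\, J_p(\mu r^\star)\, e^{-ip\theta^\star} \;=\; \bar c_0\, \delta_{p,0}.
\]

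The conclusion will follow from the location of Bessel zeros in $[0,\jj]$. Since $\zeta\neq 0$, some coefficient $c_{p_0}$ is nonzero. If $p_0\neq 0$, the relation above forces $J_{p_0}(\mu r^\star)=0$; but the first strictly positive zero of $J_{p_0}$ is at least $j_{1,1}$ (the first positive zero of $J_1$), which strictly exceeds $j_1 > \jj \geq \mu r^\star$, so the only available zero on $[0,\jj]$ is the origin, giving $r^\star=0$. If instead $p_0=0$ is the only nonzero mode, the identity reduces to $J_0(\mu r^\star)=1$; the relation $J_0'=-J_1$ with $J_1>0$ on $(0,j_{1,1})$ implies that $J_0$ is strictly decreasing on $[0,j_1]\supset[0,\jj]$, again yielding $r^\star=0$. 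In either case $x^\star=0$, and since the argument applies to every convergent subsequence of the bounded sequence $(x(t_{k_n}))$, the whole sequence converges to $0$. The main subtlety I expect lies in justifying the uniform convergence of $x(t_{k_n}+\cdot)$ on $[0,\dt]$ that allows the hypothesis to be transferred to the limit trajectory; the remainder is a direct consequence of classical facts on Bessel functions.
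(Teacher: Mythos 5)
Your proof is correct and follows essentially the same route as the paper's: identify an accumulation point $x^\star$ of the bounded sequence, use the vanishing input to replace $x(t_{k_n}+t')$ by the rotated limit $e^{t'A}x^\star$, separate the finitely many characters $e^{-ipt'}$ to isolate each Bessel coefficient, and conclude $r^\star=0$ from the absence of zeros of $J_p$ (and of solutions of $J_0=1$) in $(0,\jj]$. The only cosmetic difference is that you pass to the limit before invoking linear independence of the characters, whereas the paper argues on the sequence $\big(r(t_{k_n}),\theta(t_{k_n})\big)$ first; both are valid.
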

\begin{proof}
Since $x$ is bounded, we show that its only accumulation point is $0$.
With no loss of generality, we may assume that
$x(t_{k_n})$ tends towards some $x^\star = (r^\star\cos \theta^\star, r^\star\sin \theta^\star)\neq0$.
According to \eqref{eq_Bessel}, we have $\psX{\plong(x(t_{k_n}+t'))}{\zeta}=\sum_{p\in\sousz} c_p J_p(\mu \ray(t_{k_n}+t')) e^{-ip\angleS(t_{k_n}+t')}$, with $x=(\ray  \cos\angleS , \ray \sin\angleS )$.
As $n$ goes to $+\infty$, since $\opc\plong(x(t_{k_n}+t'))$ tends towards
$\opc\plong(0)$,
$\psX{\plong(x(t_{k_n}+t'))}{\zeta}$ 
tends towards $c_0$ if $0\in I$, or to $0$ otherwise.
In particular, for $t'=0$, we get that
$\sum_{p\in\sousz} c_p J_p(\mu \ray^\star) e^{-ip\angleS^\star} = c_0$
if $0\in I$, or $0$ otherwise.
Thus $c_0J_0(\mu r^\star)=c_0$ if $0\in I$, and $c_pJ_p(\mu \ray^\star)=0$ for $p\in I\setminus\{0\}$.
Since $u(t) = u(t_{k_n})$ for all $t\in[t_{k_n}, t_{k_n+1}]$ tends towards $0$ as $n$ goes to $+\infty$, Duhamel's formula implies that for all $t'\in[0, \dt]$,
\begin{equation*}
    x(t_{k_n}+t') - e^{t' A}x(t_{k_n}) \cvl{n\cv+\infty} 0,
\end{equation*}
\ie
\begin{equation*}
    r(t_{k_n}+t') - r(t_{k_n})\cvl{n\cv+\infty} 0
    \ \text{and}\
    e^{i\angleS(t_{k_n}+t')} - e^{i(\angleS(t_{k_n})+t')}\cvl{n\cv+\infty} 0.
\end{equation*}
Hence, since $\sousz$ is finite, for all $t'\in[0, \dt]$,
$\sum_{p\in\sousz} c_p J_p(\mu \ray(t_{k_n}))e^{-ip \angleS(t_{k_n})} e^{-ip t'}$ 
tends towards $c_0$ if $0\in I$, or towards $0$ otherwise,
as $n$ goes to $+\infty$.
Denote by $j_0$ the first zero of $J_0$.
Then $J_p(\ray)\neq0$ for any $\ray\in (-j_0, j_0)\setminus \{0\}$ and any $p\in\Z$.
Since for some $p\in\sousz$, $c_p\neq0$, we have  $J_p(\mu r(t_{k_n}))\to J_p(0)$, hence $r^\star=0$ since $|x|<\frac{\jj}{\mu}$ with $\jj<j_1<j_0$.
\end{proof}

\begin{rmrk}
In the above definition of $\zeta$, if there exist $k_1, k_2\in\Z$
with $|k_1|\neq |k_2|$, $c_{k_1}\neq0$ and $c_{k_2}\neq0$,
then $j_0=+\infty$ is a suitable choice due to the Bourget's hypothesis, proved by Siegel in \cite{siegel2014einige}.

\end{rmrk}

One can easily check that condition~\eqref{plong:eqCCA} is satisfied if $\mC = \langle\cdot, \zeta \rangle_Z$ for some $\zeta\in\doma$ (because $\mC^*\mC\mA = \langle\cdot, \mA^*\zeta\rangle\zeta$).
Thus, if $\mC$ has the form $\langle\cdot,\zeta\rangle_Z$ with $\zeta$ as in Proposition~\ref{prop:detec}, 
solutions of \eqref{eps} are such that $\mC\eps(t)\to0$ as $t\to+\infty$. In order to obtain the convergence of $\eps$ towards $0$, we need an additional observability assumption.
Let us recall the usual definition of approximate observability of \eqref{E:system_plonge_infinie} (see, \eg \cite{TW2009}).

\begin{dfntn}[Approximate observability]\label{def:app_obs}

System~\eqref{E:system_plonge_infinie} is said to be \emph{approximately observable} in some time $T>0$ for some
constant input
$u$
if and only if
\begin{equation}
    (\forall t\in[0, T],\ \opc\sg_t(\etat_0, u) = 0)
    \Longrightarrow
    \etat_0=0.
\end{equation}
\end{dfntn}
Since \eqref{E:system_plonge_infinie} is a linear system, Definition~\ref{def:app_obs} coincides with Definition~\ref{def:obs} in the finite-dimensional context.
\begin{assumption}[Isolated observability singularity]\label{ass:inobs}
Let $u\in[-\bornu, \bornu]$ where $\bornu = \bornuval$.
If $u\neq0$,
then the constant input $u$
makes \eqref{E:system_plonge_infinie}
approximately observable
in any positive time.

\end{assumption}

In \cite[Example 1]{Celle-etal.1989}, the authors investigated the observability of \eqref{E:system_plonge_infinie} in the case where
$\mu=1$ and
$\opc = \psX{\cdot}{\one}$ (\ie $\hh\circ h (x) = J_0(|x|)$, see Example~\ref{ex:Jk}).
Using a similar method, we prove the following.
\begin{prpstn}[Outputs satisfying Assumption~\ref{ass:inobs}]\label{prop:obs}
If one of the components of $\opc$ has the form $\psX{\cdot}{\zeta}$
where $\zeta = \sum_{k\in\sousz}c_ke_k\in\XX\setminus\{0\}$, with $\sousz\subset\Z$   finite,
and $\mu \bornu<j_0$ for some $j_0>0$, then \eqref{E:system_depart} satisfies Assumption~\ref{ass:inobs}.
\end{prpstn}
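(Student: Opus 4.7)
The strategy is to diagonalize $\mA(u)$ explicitly for each constant input $u$ and reduce approximate observability to a cyclicity statement that can be verified through Bessel function identities.

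First, I would exploit the unitary gauge transformation $U\colon \psi\mapsto e^{-i\mu u\cos(\varS)}\psi$ on $\XX$, which a direct computation shows satisfies $U^{-1}\mA(u)U\psi=-\psi'$. Hence $\mA(u)$ is unitarily equivalent to $-\partial_\varS$, so its spectrum is the set of simple eigenvalues $\{ik:k\in\Z\}$ with orthonormal eigenbasis $\phi_k(\varS)=e^{-ik\varS-i\mu u\cos\varS}$, $k\in\Z$.

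Next, fix $u\in[-\bornu,\bornu]\setminus\{0\}$ and $\etat_0\in\XX$ with $\opc\sg_t(\etat_0,u)\equiv 0$ on $[0,T]$, so in particular $g(t):=\psX{\sg_t(\etat_0,u)}{\zeta}\equiv 0$ on $[0,T]$. Since $\zeta$ is a trigonometric polynomial it belongs to $\bigcap_n\dom\mA(u)^n$, so $g\in C^\infty(\R,\C)$ and every derivative $g^{(n)}(0)=(-1)^n\psX{\etat_0}{\mA(u)^n\zeta}$ vanishes. I would then apply the spectral theorem for the normal operator $\mA(u)$ with simple discrete spectrum to identify $\overline{\mathrm{span}}\{\mA(u)^n\zeta:n\in\N\}$ with $\overline{\mathrm{span}}\{\phi_k:\psX{\zeta}{\phi_k}\neq 0\}$. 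This reduces the target conclusion $\etat_0=0$ to the cyclicity condition $\psX{\zeta}{\phi_k}\neq 0$ for every $k\in\Z$.

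The cyclicity condition is then tested by the Jacobi--Anger expansion $e^{i\mu u\cos\varS}=\sum_{n\in\Z}i^nJ_n(\mu u)e^{in\varS}$, which yields
\begin{equation*}
\psX{\zeta}{\phi_k}=\sum_{p\in\sousz}c_p\,i^{k+p}J_{k+p}(\mu u),
\end{equation*}
an entire function of $u$ for each $k\in\Z$. The constant $j_0>0$ of the statement is then chosen smaller than the infimum over $k\in\Z$ of the first positive zero of these functions, so that $\mu\bornu<j_0$ forces $\psX{\zeta}{\phi_k}\neq 0$ on $[-\bornu,\bornu]\setminus\{0\}$ for every $k$.

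The main obstacle is proving that this infimum is strictly positive, i.e.\ that the zeros of the countable family $(\psX{\zeta}{\phi_k})_{k\in\Z}$ do not accumulate at $u=0$. For the finitely many $k$ with $-k\in\sousz$, the contribution $c_{-k}J_0(\mu u)$ dominates near $0$ and provides a uniform lower bound by the first positive zero of $J_0$. For the remaining $k$, each $\psX{\zeta}{\phi_k}(u)$ vanishes at $u=0$ to the finite order $\min_{p\in\sousz}|k+p|$; to exclude exact cancellation between terms of equal leading order one invokes Bourget's hypothesis, proved by Siegel, stating that two Bessel functions of distinct integer orders share no common positive zero. Combining these two cases yields the required uniform lower bound and concludes the proof.
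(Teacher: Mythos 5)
Your first half is a legitimate alternative route to the same reduction. Where the paper uses the method of characteristics to write $t\mapsto\psX{\etat(t)}{\zeta}$ as a convolution over $\S^1$ and then applies Parseval, you gauge-transform $\mA(u)$ into $-\partial_\varS$ and argue through the cyclic subspace generated by $\zeta$; both roads lead to the same condition, namely that $F_\ell(\mu u)=\sum_{p\in\sousz}d_p J_{p+\ell}(\mu u)$ (with $d_p=\bar c_p i^p$) be nonzero for every $\ell\in\Z$. Two caveats on this half. The identification of $\overline{\mathrm{span}}\{\mA(u)^n\zeta\}$ with $\overline{\mathrm{span}}\{\phi_k:\psX{\zeta}{\phi_k}\neq0\}$ is not a free consequence of the spectral theorem for an unbounded operator whose spectral measure charges infinitely many eigenvalues: it is equivalent to density of polynomials in $L^2$ of the measure $\sum_k|\psX{\zeta}{\phi_k}|^2\delta_k$ on $\Z$, which holds here only because the coefficients $\psX{\zeta}{\phi_k}$, being finite combinations of $J_{k+p}(\mu u)$, decay super-exponentially in $k$; this must be justified. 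On the plus side, your Taylor-coefficient argument at $t=0$ delivers observability in arbitrarily small time directly, whereas the paper needs a separate analyticity argument to pass from time $2\pi$ to any positive time.

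The genuine gap is in the final step, which is the actual content of the proposition: producing a single $j_0>0$ such that $F_\ell(r)\neq0$ for all $r\in(-j_0,j_0)\setminus\{0\}$ and \emph{all} $\ell\in\Z$ simultaneously. Bourget's hypothesis does not give this. It asserts that two Bessel functions of distinct integer orders have no common positive zero; it says nothing about cancellation among the leading Taylor coefficients of the \emph{sum} $\sum_p d_p J_{p+\ell}$, and nothing about how close to the origin the first zero of $F_\ell$ may lie. Since $F_\ell$ vanishes at $0$ to order at least $\min_{p\in\sousz}|p+\ell|$, which tends to infinity with $|\ell|$, the isolated-zeros argument gives for each fixed $\ell$ a nonvanishing radius that could a priori shrink to $0$ as $|\ell|\to\infty$; ruling this out is precisely the difficulty. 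The paper handles it quantitatively: setting $\kmin=\min\{p\in\sousz: d_p\neq0\}$ and taking $\ell\geq-\kmin$, it combines the upper bound $|J_{p+\ell}(r)|\leq\frac{1}{(p+\ell)!}(|r|/2)^{p+\ell}$ with a matching lower bound on $|J_{\kmin+\ell}(r)|$ for $|r|\leq1$ to show that the lowest-order term dominates the rest of the sum uniformly in $\ell$, yielding $|F_\ell(r)|\geq\frac{|d_{\kmin}|}{2}|J_{\kmin+\ell}(r)|>0$ on a punctured interval independent of $\ell$ (the symmetric argument covers $\ell\leq-\max\sousz$, and only finitely many $\ell$ remain). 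Without an estimate of this type, uniform in $\ell$, your proof does not close.
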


\begin{proof}
Let $\etat_0\in\XX$, $u\in\R\setminus\{0\}$ and $\etat(t) = \sg_t(\etat_0, u)$ be the unique corresponding solution of \eqref{E:system_plonge_infinie}.
The method of characteristics yields
\begin{align}
\psX{\etat(t)}{\zeta}
&=
\frac{1}{2\pi}\int_0^{2\pi}
e^{-i\mu u \int_0^t\sin(\varS-\sigma)\diff \sigma}\etat_0(\varS-t)
\sum_{k\in\sousz}\bar{c}_ke^{-ik\varS}
\diff \varS
\nonumber
\\
&=
\frac{1}{2\pi}\int_0^{2\pi}
\left(
\sum_{k\in\sousz}\bar{c}_k
e^{-i\mu u \cos(\varS)-ik\varS}
\right)
e^{i\mu u \cos(\varS-t)}\etat_0(\varS-t)
\diff \varS
\nonumber
\\
&=
\left(\psi*\uppsi_0\right)(t)
\nonumber
\end{align}
where $*$ denotes the convolution product over $\XX$,
$\psi:\varS\mapsto \sum_{k\in\sousz}\bar{c}_k
e^{-i\mu u \cos(\varS)-ik\varS}$
and
$\uppsi_0:\varS\mapsto e^{i\mu u \cos(\varS)}\etat_0(\varS)$.
Hence, according to Parseval's theorem,
\begin{align*}
\frac{1}{2\pi}
\int_0^{2\pi}|\psX{\etat(t)}{\zeta}|^2\diff t
=\norm{\psi*\uppsi_0}^2
=\normA{\hat{\psi}\cdot\hat{\uppsi}_0}{\hat{\XX}}^2
=\sum_{\ell\in\Z} |\psX{\psi}{e_\ell}|^2 |\psX{\uppsi_0}{e_\ell}|^2,
\end{align*}
where $\hat{\psi}$ (resp. $\hat{\uppsi}_0$) denotes the Fourier series coefficients of $\psi$ (resp $\uppsi_0$) in $\XX = L^2(\S^1, \C)\subset L^1(\S^1, \C)$ and $\hat\XX = l^2(\Z, \C)$.
Hence, it is sufficient to show that there exists $j_0>0$ such that, if $\mu u<j_0$, then $\psX{\psi}{e_\ell}\neq0$ for all $\ell\in\Z$. Indeed, it yields that if $\opc\etat(t) = 0$ for all $t\in[0, 2\pi]$, then $\uppsi_0 = 0$, \ie$\etat_0=0$, and
thus $u$ makes \eqref{E:system_plonge_infinie}
approximately
observable in time $2\pi$.

Note that
\begin{align}
\psX{\psi}{e_\ell}
=
\frac{1}{2\pi}\int_0^{2\pi}
\sum_{k\in\sousz}\bar{c}_k
e^{-i\mu u \cos(\varS)-i(k+\ell)\varS}
\diff \varS
=
\sum_{k\in\sousz}\bar{c}_k i^k
J_{k+\ell}(\mu u).
\tag{by \eqref{eq_Bessel}}
\end{align}
Set $d_k = \bar{c}_k i^k$
and
$F_\ell(r) = \sum_{k\in\sousz}d_k J_{k+\ell}(r)$ for all $r\in\R$.
Since $F_\ell$ is analytic for each $\ell\in\Z$,
its zeros are isolated.
Hence, for all $L>0$, there exists $j_0>0$ such that,
if $|\ell|<L$, then $F_\ell(r) \neq 0$ for all $r\in(-j_0, j_0)\setminus\{0\}$.
Now, let $\kmin = \min\{k\in\sousz\mid d_k\neq0\}$
and let us prove that there exists $j_0>0$ such that
$F_\ell(r) \neq 0$ for all $r\in(-j_0, j_0)\setminus\{0\}$ and all $\ell\geq-\kmin$.
(One can reason similarly for $\ell\leq \max\{k\in\sousz\mid d_k\neq0\}$).
We have
$F_\ell(r) = d_{\kmin}J_{\kmin+\ell}(r)\left(1 + \sum_{k\in\sousz}
\frac{d_k}{d_{\kmin}}\frac{J_{k+\ell}(r)}{J_{\kmin+\ell}(r)}\right)$.
According to \cite{neuman2004inequalities},
$\left|J_{k+\ell}(r)\right| \leq
\frac{1}{(k+\ell)!}\left(\frac{|r|}{2}\right)^{k+\ell}$
for all $r\in\R$.
Moreover, according to \cite{laforgia1986inequalities}, if $|r|\leq 1$, then
$$\left|J_{\kmin+\ell}(r)\right|\geq |r|^{\kmin+\ell}J_{\kmin+\ell}(1)
\geq \frac{|r|^{\kmin+\ell}}{(\kmin+\ell)!2^{\kmin+\ell}}\left(1 - \frac{1}{2(\kmin+\ell+1)}\right).
$$
Hence
\begin{align*}
|F_\ell(r)|
&\geq \left|d_{\kmin}\right|\left|J_{\kmin+\ell}(r)\right|
\left(1 - \sum_{k\in\sousz}
\frac{\left|d_k\right|}{\left|d_{\kmin}\right|}\frac{\left|J_{k+\ell}(r)\right|}{\left|J_{\kmin+\ell}(r)\right|}\right)\\
&\geq
\left|d_{\kmin}\right|\left|J_{\kmin+\ell}(r)\right|
\left(1 - 2\sum_{k\in\sousz}
\frac{\left|d_k\right|}{\left|d_{\kmin}\right|}
\left(\frac{|r|}{2}\right)^{k-\kmin}
\right).
\end{align*}
Hence, there exits $j_0>0$ such that, if 
$0<|r|<j_0$,
$|F_\ell(r)|\geq \frac{\left|d_{\kmin}\right|}{2} \left|J_{\kmin+\ell}(r)\right|$
for all $\ell\in\Z$.
Choosing $j_0\leq\min\{r>0\mid J_0(r)=0\}$,
one has $J_{\kmin+\ell}(r)\neq0$ for all $\ell\in\Z$,
hence $F_\ell(r)\neq0$.

In particular, if $\zeta = e_k$ for some $k\in\sousz$,
then $j_0 = \min\{r>0\mid J_0(r)=0\}$ is a suitable choice. Indeed, $J_k(\ray)\neq0$ for all $\ray\in (-j_0, j_0)\setminus\{0\}$ and all $k\in\Z$.
Hence, if $\mu \bornu < j_0$,
then $u$ makes \eqref{E:system_plonge_infinie}
approximately
observable in time $2\pi$.
Let $\upzeta(t) = \sg_t^*(\zeta, u)$, \ie
the solution of $\dot\upzeta = \mA(u(t))^*\upzeta$, $\upzeta(0)=\zeta$.
Since $\zeta$ is analytic, $t\mapsto\upzeta(t)$ is analytic as the unique solution of an analytic system.
Hence
$t\mapsto\psX{\etat(t)}{\zeta} = \psX{\etat_0}{\zeta(t)}$
is also analytic.
Thus $u$ makes \eqref{E:system_plonge_infinie}
approximately
observable in any positive time,
because $t\mapsto\psX{\etat(t)}{\zeta}$ vanishes on $[0, T]$ for some $T>0$ if and only if it vanishes on $[0, 2\pi]$.
\end{proof}

\begin{rmrk}
It is always possible to make $\mu\bornu<j_0$ by choosing $\kappa\jj$ and $\mu\delta$ small enough.
As explained in Remark~\ref{rem:gr}, the considered set of such maps $\opc$ is sufficient to approximate any output map $h$.
Moreover, if
$\psX{\cdot}{e_{k_1}}$ and $\psX{\cdot}{e_{k_2}}$
are two of linear forms of $\opc$
with $|k_1|\neq |k_2|$,
then $j_0=+\infty$ is a suitable choice 
due to the Bourget's hypothesis, proved by Siegel in \cite{siegel2014einige}.
\end{rmrk}

We are now in position to state the main result of Section~\ref{sec:example_infinie}.

\begin{thrm}\label{th:infinite}
Let $\Kx\subset\R^2$ be a compact set.
Let $R_0>0$ be such that $\Kx\subset B_{\R^2}(0, R_0)$.
Let 
$\mu_0, \delta_0, \dt_0>0$
be as in Corollary~\ref{cor:semiglob}.
Suppose there exists $\mu\in(0, \mu_0)$
such that for all $\delta\in(0, \delta_0)$ and all $\dt\in(0, \dt_0)$,
Assumptions~\ref{ass:h},~\ref{ass:detec} and~\ref{ass:inobs} are satisfied.
Then system \eqref{E:system_depart} is
stabilizable over
$\Kx$
by means of an infinite-dimensional piecewise constant dynamic output feedback.
Moreover, the closed-loop system is explicitly given by \eqref{E:system_stab_infinie}
for any $\alpha>0$ and
with
$\plong$ as in \eqref{E:deftau} and
$\inv$ as in \eqref{E:definv}.
\end{thrm}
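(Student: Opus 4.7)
The plan is to verify the three conditions of Definition~\ref{def:stab_inf} with $\etath^\star = \plong(0) = \one$ and $\Kz = \plong(\Kx)$. For condition \ref{defi}, Lemma~\ref{lem:well} gives existence, uniqueness and regularity of the solution as soon as $\mu$ is chosen so that $\Kx \subset \bar{B}_{\R^2}(0, \jj/\mu)$, which Corollary~\ref{cor:semiglob} permits. For the Lyapunov stability in condition \ref{defii}, I would apply Corollary~\ref{cor:semiglob} on small balls $B_{\R^2}(0, r)$ to control $|x(t)|$, and combine the non-increase of $\norm{\eps(t)}$ from \eqref{E:eps_decroit_infinie} with the triangle inequality $\norm{\etath(t) - \one} \leq \norm{\eps(t)} + \norm{\plong(x(t)) - \one}$ and continuity of $\plong$ at $0$ to control $\norm{\etath(t) - \one}$ in the strong topology.

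The core of the proof is attractivity, condition \ref{defiii}. First, I would show $\mC\eps(t) \to 0$ via Barbalat's lemma applied to \eqref{E:eps_decroit_infinie}, using the bound \eqref{plong:eqCCA}, which is satisfied in our setup because $\mC = \psX{\cdot}{\zeta}$ with $\zeta \in \doma$ under Assumption~\ref{ass:h}. I would then argue by contradiction: suppose there is a sampling sequence $t_{k_n} \to \infty$ with $x(t_{k_n}) \to \bar x$ strongly and $\zhat(t_{k_n}) \cvf \bar\zhat$ weakly, but $(\bar x, \bar\zhat) \neq (0, \one)$. Such a sequence exists by weak-strong sequential compactness of the bounded trajectory (Corollary~\ref{cor:semiglob}). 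Because $\inv$ depends on $\xi$ only through $\psX{\xi}{e_1}$ (hence is weakly continuous) and $\psn^2(\cdot - \one)$ metrizes weak convergence on bounded sets, the sampled input converges: $u(t_{k_n}) \to \bar u := K\inv(\bar\zhat) + \delta\psn^2(\bar\zhat - \one)$. Since $u$ is constant on $[t_{k_n}, t_{k_n}+\dt]$, passing to the weak limit in the error equation on this interval produces a limit trajectory $(\bar x(s), \bar\eps(s))$ satisfying $\mC \bar\eps(s) \equiv 0$, so that $\dot{\bar\eps} = \mA(\bar u)\bar\eps$.

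Iterating this limit construction across consecutive sampling intervals yields a full limit orbit $(\bar x^{(k)}, \bar\eps^{(k)})_{k \in \N}$. If $\bar u^{(k)} \neq 0$ for some $k$, Assumption~\ref{ass:inobs} provides approximate observability of the error system with constant input $\bar u^{(k)}$ in time $\dt$, forcing $\bar\eps^{(k)} = 0$; invertibility of the unitary group $e^{s\mA(\bar u^{(j)})}$ restricted to the invariant subspace $\ker \mC$ then propagates this to $\bar\eps^{(j)} = 0$ for all $j$. Once $\bar\eps$ is identically zero on the limit orbit, the dynamics on $\bar x$ reduce to $\dot{\bar x} = (A + bK)\bar x + b\,\delta\,\psn^2(\plong(\bar x) - \one)$, a small perturbation (for $\delta$ small enough) of the Hurwitz system $\dot x = (A+bK)x$, forcing $\bar x = 0$ by a strict Lyapunov argument in the spirit of Lemma~\ref{lem:delta}. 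In the remaining case $\bar u^{(k)} = 0$ for all $k$, the limit state satisfies $\bar x(t) = e^{tA}\bar x^{(0)}$, and $\opc\plong(x(t_{k_n}+s)) \to \opc\plong(e^{sA}\bar x^{(0)})$; combined with the Bessel-series structure of $\opc\plong$ inherited from Assumption~\ref{ass:h}, Assumption~\ref{ass:detec} then forces $\bar x^{(0)} = 0$, after which $\bar u^{(0)} = \delta\psn^2(\bar\eps^{(0)}) = 0$ and the norm property of $\psn$ gives $\bar\eps^{(0)} = 0$. Both cases contradict $(\bar x, \bar\zhat) \neq (0, \one)$.

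The main obstacle I foresee is the case $\bar u^{(k)} = 0$ for every $k$: to invoke Assumption~\ref{ass:detec} one needs the limiting output $\opc\plong(e^{sA}\bar x^{(0)})$ to actually equal $\opc\plong(0)$ on $[0, \dt]$. Reconciling this requires exploiting the Bessel-Fourier expansion of $\opc\plong$ together with the parameter choices from Corollary~\ref{cor:semiglob}, which arrange the bound $\bornu$ in Assumption~\ref{ass:inobs} so that the rotating term $K e^{k\dt A}\bar x^{(0)}$ dominates the uniformly bounded perturbation $\delta\psn^2$ unless $\bar x^{(0)} = 0$. This parameter interplay is the delicate technical point of the argument.
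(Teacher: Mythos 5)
Your overall architecture (reduce to Definition~\ref{def:stab_inf}, use Lemma~\ref{lem:well} and Corollary~\ref{cor:semiglob} for well-posedness and boundedness, then prove stability and attractivity separately, with attractivity handled through weak limit points and a case split on whether the limiting sampled input vanishes) matches the paper's. But there are two genuine gaps. First, the stability step: you cannot ``apply Corollary~\ref{cor:semiglob} on small balls'' because the parameters $\mu,\delta,\dt$ are fixed once and for all from the large set $\Kx$, while the thresholds $\delta_0(\Ra),\dt_0(\Ra)$ in that corollary shrink with $\Ra$ (inequality~\eqref{Ineq1} forces $16M\nu^2\delta_0<\Rb-\Ra$, so the admissible $\delta$ degenerates as $\Ra\to0$). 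The corollary is a boundedness result with fixed margins, not a stability result. What actually saves stability is that near the equilibrium the perturbation is not merely bounded by $16\nu^2\delta$ but \emph{small}: $\psn^2(\zhat(t_k)-\one)\leq\nu^2\left(\norm{\eps(t_k)}+\Lplong|x(t_k)|\right)^2$, so the closed loop is an $O(r_\eps)$-perturbation of the Hurwitz system $\dot x=(A+bK)x$ plus a sampling error controlled by Gr\"onwall; this finer decomposition is what the paper carries out and what your argument is missing.

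Second, and more centrally, the case where all limiting sampled inputs vanish. You correctly identify the obstacle --- Assumption~\ref{ass:detec} requires $\opc\plong(x(t_{k_n}+t'))\to\opc\plong(0)$, which is not available from the limit orbit $e^{sA}\bar x^{(0)}$ alone --- but your proposed resolution (a ``domination'' of $\delta\psn^2$ by the rotating term $Ke^{k\dt A}\bar x^{(0)}$) is phrased in terms of the state and does not close the gap: the input is a function of $\zhat$, not of $x$, and nothing forces $\inv(\zhat)$ to track $x$ before the error has been shown to vanish. The actual mechanism is algebraic and lives on the observer side: writing $u(t_{k_n-j})\to0$ for $j=0,1,2$ and using that $\inv$ factors through the single Fourier coefficient $\psX{\cdot}{e_1}$, which rotates by $e^{-ij\dt}$ along the limit orbit while $\psn^2(\zhat^\star(\cdot-t')-\one)=\psnlim$ is rotation-invariant, one obtains the linear system $K\invf(e^{-ij\dt}\psX{\zhat^\star}{e_1})+\delta\psnlim=0$ for $j=0,1,2$, whose matrix built from $K$, $K\frakr(\dt)$, $K\frakr(2\dt)$ is invertible for $\dt\in(0,\pi)$. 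This forces $\psnlim=0$, i.e.\ $\zhat^\star=\one$ weakly; only then does $\opc\zhat\to\opc\plong(0)$, hence (using $\opc\eps\to0$) $\opc\plong(x(t_{k_n}-t'))\to\opc\plong(0)$, and Assumption~\ref{ass:detec} finally yields $x(t_{k_n})\to0$ and $\eps^\star=0$. Without this three-consecutive-samples linear-algebra step --- which is precisely where the feedback perturbation $\delta\psn^2$ earns its keep --- the argument does not go through.
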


Combining Propositions~\ref{prop:Jk},~\ref{prop:detec} and~\ref{prop:obs}, we obtain Theorem~\ref{cor:main} as an immediate corollary.
Regarding Corollary~\ref{cor:x2inf}, the proof is as follows.
\begin{proof}[Proof of Corollary~\ref{cor:x2inf}]
Let $\Kx\subset\R^2$ be a compact set.
Let $R_0>0$ be such that $\Kx\subset B_{\R^2}(0, R_0)$.
Let $\mu_0, \delta_0, \dt_0>0$ be as in Corollary~\ref{cor:semiglob}.
According to Example~\ref{ex:Jk} and Proposition~\ref{prop:detec}, Assumptions~\ref{ass:h} and~\ref{ass:detec} are satisfied for any $\mu\in(0,\mu_0)$ by considering 
$\hh:y\mapsto J_0(\mu\sqrt{2y})$.
Moreover, by choosing $\kappa\jj<j_0$ and $\delta<\frac{j_0-\kappa\jj}{16\nu^2\mu}$,
Assumption~\ref{ass:inobs} is also satisfied according to Proposition~\ref{prop:obs}.
Hence, Theorem~\ref{th:infinite} does apply on $\Kx$.
\end{proof}

\subsection{Proof of Theorem~\ref{th:infinite}}
Let $\Kx$ be a compact subset of $\R^2$.
Let $\Ra>0$ be such that $\Kx\subset\B_{\R^2}(0, \Ra)$
and $\mu_0, \delta_0, \dt_0>0$
be as in Corollary~\ref{cor:semiglob}.
This implies the statement \ref{defi} of Definition~\ref{def:stab_inf}, 
with $\Kz=\plong(\Kx)$.
Only \ref{defii} and \ref{defiii} remain to be proved.
Let $\alpha>0$,
$\plong$ be as in \eqref{E:deftau} and
$\inv$ be as in \eqref{E:definv}.
Let $x_0$ and $\xhat_0$ be in $\Kx$,
$(x, \etath)$ be the corresponding solution of \eqref{E:system_plonge_infinie},
$\etat=\plong(x)$, $\eps = \etath-\etat$ and $u = \phi(\inv(\etath)) + \delta\psn^2(\etath-\one)$.
Remark that
\begin{align*}
    \norm{\eps} - \Lplong|x|
    &\leq \norm{\etath-\one} + \norm{\etat-\one} - \Lplong|x|\\
    &=\norm{\etath-\one} + \norm{\plong(x)-\plong(0)} - \Lplong|x|\\
    &\leq \norm{\etath-\one}
\end{align*}
and
\begin{align*}
    \norm{\etath-\one}
    \leq \norm{\eps} + \norm{\etat-\one}
    \leq \norm{\eps} + \Lplong|x|
\end{align*}
where $\Lplong$ is the Lipschitz constant of $\plong$ over $\Kx$.
Hence proving statement \ref{defii} of Definition~\ref{def:stab_inf} reduces to prove
(again with $\Kz=\plong(\Kx)$)
that for some $\mu, \delta$ and $\dt$ small enough,
\begin{enumerate}[label = \textit{(ii')}]
    \item
    \label{defii2}
    For all $R_x, R_{\eps}>0$, there exists $r_x, r_{\eps}>0$ such that
    for all $(x_0, \etath_0)\in\Kx\times\plong(\Kx)$,
    if $|x_0|<r_x$ and $\norm{\eps_0}<r_{\eps}$,
    then
    $|x(t)|<R_x$ and $\norm{\eps(t)}<R_{\eps}$ for all $t\geq0$.
\end{enumerate}
Since $\plong$ is continuous, if $x\to0$, then $\plong(x)\to\one$, and, a fortiori, $\plong(x)\cvf\one$.
Hence proving statement \ref{defiii} of Definition~\ref{def:stab_inf}
reduces to prove
that for some $\mu, \delta$ and $\dt$ small enough,
\begin{enumerate}[label = \textit{(iii')}]
    \item
    \label{defiii2}
    $x(t)\to0$ and $\eps(t)\cvf0$ as $t$ goes to infinity.
\end{enumerate}
We prove \ref{defii2} in Section~\ref{sec:stability_infinie} and \ref{defiii2} in Section~\ref{sec:attractivity_infinie}.

\subsubsection{Stability}\label{sec:stability_infinie}

In order to prove stability, we reason as in Section~\ref{sec:stability}.
Let $R_x, R_{\eps}>0$. We seek $r_x, r_{\eps}>0$ such that
for all $(x_0, \etath_0)\in\Kx\times\plong(\Kw)$,
if $|x_0|<r_x$ and $\norm{\eps_0}<r_{\eps}$,
then
$|x(t)|<R_x$ and $\norm{\eps(t)}<R_{\eps}$ for all $t\geq0$.
Since $\norm{\eps}$ is non-increasing,
choosing $r_\eps \leq R_\eps$ proves the stability of $\eps$.
The dynamics of $x(t)$ can be written as:
\begin{align}
    \dot x(t) &=
    (A+bK)x(t)
    + \delta \psn^2(\plong(x(t))-\one) b\label{ligne1}\\
    &\quad + bK(x(t_k) - x(t))
    +\delta\psn^2(\plong(x(t_k))-\one)b
    -\delta \psn^2(\plong(x(t))-\one)b\label{ligne2}\\
    &\quad +
    bK(\inv(\etath(t_k)-x(t_k))
    +\delta\psn^2(\etath(t_k)-\one)b
    -\delta \psn^2(\plong(x(t_k))-\one)b\label{ligne3}
\end{align}
First, we show that \eqref{ligne1} is a locally exponentially stable dynamical system when $\delta$ is small enough.
Indeed,
$A+bK$ is Hurwitz
and for all $\xi_1, \xi_2$ in $Z$,
\begin{align}
    \left|\psn^2(\xi_1) - \psn^2(\xi_2)\right|
    &\leq\sum_{k\in\N} \frac{1}{k^2+1}
    \left||\psX{\xi_1}{e_k}|^2 - |\psX{\xi_2}{e_k}|^2\right|\nonumber\\
    &\leq\sum_{k\in\N} \frac{1}{k^2+1}
    |\psX{\xi_1-\xi_2}{e_k}|\left(|\psX{\xi_1}{e_k}|+|\psX{\xi_2}{e_k}|\right)\nonumber\\
    &\leq \nu^2
    \|\xi_1-\xi_2\|_Z(\|\xi_1\|_Z+\|\xi_2\|_Z).\label{Nlip}
\end{align}
Hence, for all $x_1, x_2$ in $\R^2$,
\begin{align*}
    \left|\delta \psn^2(\plong(x_1)-\one)
    - \delta \psn^2(\plong(x_2)-\one)\right|
    \leq 4\delta \nu^2\Lplong|x_1-x_2|.
\end{align*}
Let $P\in\R^{2\times 2}$ be positive definite such that 
$P(A+bK) + (A+bK)'P < -2 \Id_{\R^2}$.
If $\chi$ is a solution of \eqref{ligne1},
then we have
\begin{align*}
\frac{\diff}{\diff t}
(\chi'P\chi)(t)
\leq -2|\chi(t)|^2 + 4\delta \nu^2\Lplong|Pb||\chi(t)|^2
\end{align*}
Thus, by choosing $\delta<\min(\delta_0, \frac{1}{4\nu^2\Lplong|Pb|})$, we get
 $\frac{\diff}{\diff t}
(\chi'P\chi)(t)
\leq -|\chi(t)|^2
$,
hence the local exponential stability of
\eqref{ligne1}.
Now, we show that the perturbation term \eqref{ligne2} preserve stability.
If $\chi$ is a solution of \eqref{ligne1}-\eqref{ligne2},
then, with the variation of constants,
\begin{align*}
    \chi(t)
    = e^{(t-t_k)(A+bK)}\chi(t_k)
    +\int_{t_k}^t e^{(t-s)(A+bK)}
    b\left(K(\chi(t_k)-\chi(s))+
    \delta\psn^2(\plong(\chi(t_k))-\one)\right)\diff s.
\end{align*}
Hence, for all $k\in\N$ and all $t\in[t_k, t_{k+1})$,
\begin{align*}
    |\chi(t)-\chi(t_k)|
    &\leq \left\|\Id_{\R^2} - e^{(t_k-t)(A+bK)} \right\| |\chi(t)|\\
    &\quad+
    \int_{t_k}^t \left\|e^{(t_k-s)(A+bK)}\right\|
    \left(
    \kappa |\chi(t_k)-\chi(s)|
    +
    4\delta \nu^2\Lplong|\chi(t_k)|\right)
    \diff s\\
    &\leq\left(
    \left\|\Id_{\R^2} - e^{(t_k-t)(A+bK)} \right\|
    +
    4\delta \nu^2\Lplong
    \int_{t_k}^t \left\|e^{(t_k-s)(A+bK)}\right\|
    \diff s\right)
    |\chi(t)|\\
    &\quad+
    \int_{t_k}^t \left\|e^{(t_k-s)(A+bK)}\right\|
    \kappa |\chi(t_k)-\chi(s)|\diff s\\
    &\quad+
    \int_{t_k}^t \left\|e^{(t_k-s)(A+bK)}\right\|
    4\delta \nu^2\Lplong
    |\chi(t)-\chi(t_k)|
    \diff s.\\
\end{align*}
Since $\left\|\Id_{\R^2}-e^{(t_k-t)(A+bK)}\right\|\leq \dt\|A+bK\|e^{\dt\|A+bK\|}$
and $\int_{t_k}^{t} \left\|e^{(t_k-s)(A+bK)}\right\|\diff s\leq M$ for some $M>0$ independent of $t$ and $k$ (as in the proof of \eqref{E:varcons}), we obtain 
\begin{multline*}
    (1 - 4\delta \nu^2\Lplong M)|\chi(t)-\chi(t_k)|
    \leq(
    \dt\|A+bK\|e^{\dt\|A+bK\|}
    +
    4\delta \nu^2\Lplong
    M)
    |\chi(t)|\\
    +
    \int_{t_k}^t \left\|e^{(t_k-s)(A+bK)}\right\|
    \kappa |\chi(t_k)-\chi(s)|\diff s.
\end{multline*}
Choosing $\delta<\frac{1}{8\nu^2\Lplong M}$, we obtain
by Grönwall's lemma,
\begin{align}\label{gronwall}
    |\chi(t)-\chi(t_k)|\leq 2\left(\dt\|A+bK\|e^{\dt\|A+bK\|}+4\delta \nu^2\Lplong  M\right)e^{2\dt\kappa M}|\chi(t)|
\end{align}
Using the fact that with $\delta$ as above
$\frac{\diff}{\diff t}
(\chi'P\chi)(t)
\leq -|\chi(t)|^2
$ if $\chi$ satisfies \eqref{ligne1}, we also get if $\chi$ satisfies \eqref{ligne1}-\eqref{ligne2} that
\begin{align}\label{lyapgronwall}
\frac{\diff}{\diff t}
(\chi'P\chi)(t)
\leq -|\chi(t)|^2 + 
|\chi(t)| |Pb|
\left(\kappa |\chi(t_k)-\chi(t)|
+4\delta \nu^2\Lplong|\chi(t_k)-\chi(t)|
\right)
\end{align}
Combining \eqref{gronwall} and \eqref{lyapgronwall} shows that \eqref{ligne1}-\eqref{ligne2} is still locally exponentially stable by choosing $\delta$ and $\dt$ small enough.
Finally, we show that the perturbation term \eqref{ligne3} preserves stability.
Since the dynamical system \eqref{ligne1}-\eqref{ligne2} is locally exponentially stable, there exists $r_x>0$ and $\eta>0$ such that, if $|x_0|\leq r_x$ and the perturbation term \eqref{ligne3} is bounded by $\eta$,
then $|x(t)|\leq R_x$ for all $t\in\R_+$.
Since we have
\begin{align*}
    |bK\inv(\etath(t_k))-x(t_k)|
    \leq \kappa\Linv \norm{\eps(t_k)} 
    \leq \kappa\Linv r_\eps
\end{align*}
and (by \eqref{Nlip}),
\begin{align*}
    \left|\delta\psn^2(\etath(t_k)-\one)
    -\delta \psn^2(\plong(x(t_k))-\one)
    \right|
    \leq
    \delta\nu^2\|\eps(t_k)\|_Z(\|\zhat(t_k)\|_Z+\|\tau(x(t_k))\|_Z)
    \leq \delta\nu^2r_\eps(r_\eps+2)
\end{align*}
we obtain the desired result by choosing
\begin{equation}
    \kappa\Linv r_\eps + \delta\nu^2r_\eps(r_\eps+2) \leq \eta.
\end{equation}

\subsubsection{Attractivity}\label{sec:attractivity_infinie}
\noindent
\textbf{Step 1: Show that $\boldsymbol{\eps\cvf 0}$.}
Let $\Omega$ be the set
of weak limit points of $(\epsilon(t))_{t\in\R_+}$.
According to \eqref{E:eps_decroit_infinie}, $\eps$ is bounded.
Hence, by Alaoglu's theorem, $\Omega$ is not empty.
It remains to show that $\Omega = \{0\}$.
Let $\eps^\star\in\Omega$ and an increasing sequence $(t_n)_{n\in\N}
\cv+\infty
$
such that $\eps(t_{n})\cvf\eps^\star$ as $n\cv+\infty$.

For all $n\in\N$, let $k_n$ be such that $t_n\in[t_{k_n}, t_{k_n+1})$.
Again, after passing to a subsequence,
we may assume that $(t_{k_n})$ is increasing.
Let $p\in\N$ to be fixed later.
Consider the sequences
$(u(t_{k_n})), (u(t_{k_n-1})),\dots,(u(t_{k_n-p}))$ (with $t_\ell=0$ if $\ell<0$).
According to Corollary~\ref{cor:semiglob}, $|u|$ is bounded by $\bornuval$.
Moreover, combining 
\eqref{E:etat_constant} and \eqref{E:eps_decroit_infinie},
$\etath$ is also bounded.
Hence, after passing to a subsequence, we may assume that $(u(t_{k_{n}})), (u(t_{k_{n}-1})),\dots,(u(t_{k_{n}-p}))$ converge and $\etath(t_{k_n-p})$ converges weakly to some $\etath^\star\in \XX$.

If there exists $j\in\{0,\dots,p\}$ such that $(u(t_{k_{n}-j}))_{n\in\N}$ does not converge to $0$, then the arguments used with persistance assumptions (such as the ones found in \cite{Celle-etal.1989}) remain valid.
According to Assumption~\ref{ass:inobs}, $(u(t_{k_{n}-j}))_{n\in\N}$ has an accumulation point $u^\star$ that makes the system~\eqref{E:system_plonge_infinie} approximately observable in any positive time.
Since $u(t) = u(t_{k_{n}-j})$ for all $t\in[t_{k_{n}-j}, t_{k_{n}-j+1})$
we obtain by \cite[Theorem 3.5]{brivadis:hal-02529820} (see also \cite[Theorem 7, Step 4]{Celle-etal.1989}) that $\eps^\star=0$.

Now, assume that $u(t_{k_{n}-j})\to0$ as $n\to+\infty$ for all $j\in\{0,\dots,p\}$.
The system is sufficiently explicit to allow computation of  $\etath(t_{k_{n}-j})$  from the knowledge of $\etath(t_{k_{n}-p})$ for all $j\in\{0,\dots,p-1\}$.
The assumption $u(t_{k_{n}-j})\to0$ as $n\to+\infty$ then implies strong constraints on the weak limit $\etath^\star$ of $\etath(t_{k_n}-p)$. With $p=2$, we actually obtain that the only possible case corresponds to $(\etath(t_n),\varepsilon(t_n))\tow(\one,0)$. This takes the remainder of the step.

Passing to the limit in the expression of $u(t_{k_n-p})$, we get the existence of $\psnlim\in\R_+$ such that $\psn^2(\etath(t_{k_n-p})-\one) \to \psnlim$ and 
\begin{align}
    K \invf(\psX{\etath^\star}{e_1}) + \delta \psnlim = 0.
\end{align}
Using the method of characteristics, one can show that for all $t, \ttau\in\R_+$ and almost all $\varS\in\S^1$,
\begin{align}\label{E:uconst}
    \etat(t+\ttau, \varS) = \carac(t+\ttau, t, \varS)\etat(t, \varS-\ttau).
\end{align}
where $\carac(t+\ttau, t, \varS) = e^{-i\mu \int_{t}^{t+\ttau}u(\sigma)\sin(\varS-\sigma)\diff \sigma}$.
Then, according to Duhamel's formula,
\begin{align}\label{E:xihat}
    \etath(t+\ttau, \varS) = \carac(t+\ttau, t, \varS)\etath(t, \varS-\ttau) -
    \alpha
    \int_{t}^{t+\ttau} \carac(t+\ttau, \sigma, \varS)
    \big(\left(\opc^*\opc\eps(\sigma)\right)(\varS-\ttau)\big)\diff \sigma.
\end{align}
Since
$u(t) = u(t_{k_{n}-j})$ for all $t\in[t_{k_{n}-j}, t_{k_{n}-j+1})$
and
$u(t_{k_{n}-j})\to0$ as $n\to+\infty$ for all $j\in\{0,\dots,p\}$,
we have that
$\carac(t_{k_{n}-p}+t', t_{k_{n}-p}, \varS)\to1$ as $n\to+\infty$, uniformly in $\varS\in\S^1$, for all $t'\in[0, p\dt]$.
Then
\begin{multline}\label{E:ineqzhat}
    \norm{\etath(t_{k_{n}-p}+t', \cdot) - \etath(t_{k_{n}-p}, \cdot-t')}
    \leq \sup_{\varS\in\S^1}|\carac(t_{k_{n}-p}+t', t_{k_{n}-p}, \varS) - 1| \norm{\etath(t_{k_{n}-p})} \\
    + \alpha \sup_{\substack{\sigma\in[t_{k_{n}-p}, t_{k_{n}-p}+t']\\ \varS\in\S^1}}|\carac(t_{k_{n}-p}+t', \sigma, \varS)|
    \norm{\int_{t_{k_{n}-p}}^{t_{k_{n}-p}+t'}\opc^*\opc\eps(\sigma)\diff \sigma}
\end{multline}
tends towards $0$ as $n$ goes to $+\infty$
since $\etath$ and $\eps$ are bounded,
$t\mapsto\norm{\mC\varepsilon(t)}$ is integrable over $\R_+$
(see \eqref{E:eps_decroit_infinie})
Hence, for all $t'\in[0, p\dt]$,
\begin{align}\label{E:cvzhat1}
    \psX{\etath(t_{k_{n}-p}+t',\cdot)}{e_1}
    \to \psX{\etath^\star(\cdot-t')}{e_1}
    = e^{-it'} \psX{\etath^\star}{e_1}
\end{align}
and
\begin{align}\label{E:cvzhat2}
   \psn^2(\etath(t_{k_n-p}+t', \cdot)-\one)
   \to \psn^2(\etath^\star(\cdot-t')-\one)
   = \psnlim.
\end{align}
as $n$ goes to $+\infty$.
According to \eqref{E:cvzhat1} and \eqref{E:cvzhat2}
for $t'=j\dt$ and $j\in\{0,\dots,p\}$,
and since $u(t_{k_{n}-j})\to0$, we get
\begin{equation}\label{syst:absurde}
    K \invf(e^{-ij\dt} \psX{\etath^\star}{e_1}) + \delta \psnlim = 0,\qquad \forall j\in\{0,\dots,p\}.
\end{equation}
For all $t\in\R$ and all $\zeta\in \B_{\C}\left(0, J_1\left(\jj\right)\right)$, we have by \eqref{E:def_f},
$
K \invf(e^{i t} \zeta)
= K \frakr(t) \invf(\zeta)
$
where 
$\frakr(t) = \begin{pmatrix}
\cos t &\sin t\\
-\sin t&\cos t
\end{pmatrix}$.
Equations \eqref{syst:absurde}
with $p=2$ 
can be rewritten as the matrix equality
\begin{equation*}
\begin{pmatrix}
K & 1\\
K \frakr(\dt) & 1\\
K \frakr(2\dt) & 1\\
\end{pmatrix}
\begin{pmatrix}
K\invf(\psX{\etath^\star}{e_1})\\
\delta \psnlim
\end{pmatrix} = 0.
\end{equation*}
Since the square matrix on the left hand side is invertible for $\dt\in(0, \pi)$, we get that $\psnlim = 0$ \emph{i.e.} $\etath(t_{k_{n}-p}) \tow \one$.
Combining it with \eqref{E:ineqzhat}, we have
$\etath(t_{k_{n}}-t') \tow \one$ as $n$ goes to $+\infty$
for all $t'\in[0, p\dt]$.
In particular, $\opc \etath(t_{k_{n}}-t') \to \opc\plong(0)$.
Since $\opc\eps(t)\to 0$ as $t\to+\infty$ by \eqref{E:eps_decroit_infinie}, we obtain $\opc\plong(x(t_{k_{n}}-t')) \to \opc\plong(0)$ as $n\to+\infty$.
Hence, by Assumption~\ref{ass:detec}, $x(t_{k_{n}})\to0$ \emph{i.e.} $\etat(t_{k_{n}})\to\one$.
Thus $\eps(t_{k_{n}})\tow 0$ \emph{i.e.} $\eps^\star=0$.

\medskip
\noindent
\textbf{Step 2: Show that $\boldsymbol{x\cv0}$.}
Recall that $x$ satisfies the following dynamics:
\begin{align*}
    \xdot = (\matR+bK) x + bK(\inv(\etath)-x) + \delta\psn^2(\etath-\one) b.
\end{align*}
Since $A+bK$ is Hurwitz, there exists $P\in\R^{2\times 2}$ positive definite such that $P(A+bK) + (A+bK)'P < -2 \Id_{\R^2}$.
Set $V:\R^2\ni x\mapsto x'Px$.
Then
\begin{align*}
    \frac{\diff}{\diff t} V(x)
    &\leq -2 |x|^2 + 2|x| |Pb| \kappa |\inv(\etath)-x|
    + 2|x||Pb|\delta\psn^2(\etath-\one)\\
    &\leq -2 |x|^2 + 2|Pb|\frac{\jj}{\mu}\left(\kappa |\inv(\etath)-x|+
    \delta\psn^2(\etath-\one)\right).
\end{align*}
We have
\begin{align*}
    \psn(\etath-\one)
    \leq \psn(\eps) + \psn(\etat-\one)
    \leq \psn(\eps) + \nu\norm{\etat-\one}
    \leq \psn(\eps) + \nu \Lplong|x|
\end{align*}
where $\Lplong$ is the Lipschitz constant of $\plong$ over $\Kx$.
Hence, if $\delta\leq\frac{\mu}{4|Pb|j\nu^2\Lplong^2}$
(which we can assume without loss of generality by replacing $\delta_0$ by $\min(\delta_0, \frac{\mu}{4|Pb|j\nu^2\Lplong^2})$, since diminishing $\delta$ ),
then
\begin{align*}
    \frac{\diff}{\diff t} V(x)
    &\leq -|x|^2
    + 2|Pb|\frac{\jj}{\mu}\left(\kappa|\inv(\etath)-x|
    + 2\delta\psn^2(\eps)\right).
\end{align*}
Recall that $|x|$ and $|\inv(\etath)|$ are bounded by $\frac{\jj}{\mu}$.
Moreover, $\psn(\eps(t))\to0$ as $t\to+\infty$ by Step~1,
and $\inv(\etath)-x\to0$ as $t\to+\infty$ since $\inv$ is a strong left-inverse of $\plong$ (see Corollary~\ref{cor:conv_faible_forte}).

For all $r>0$, set $D(r)=\{x\in\R^2\mid V(x)\leq r\}$.
In order to prove that $x\to0$, we show that for all $r>0$, there exists $T(r)\geq0$ such that $x(t)\in D(r)$ for all $t\geq T(r)$.
If $r>0$ is such that $\bar{B}_{\R^2}(0, \frac{\jj}{\mu})\subset D(r)$ then $T(r)=0$ satisfies the statement.
Let $0<r<R$ be such that $\bar{B}_{\R^2}(0, \frac{\jj}{\mu})\not \subset  D(r)$ and $\bar{B}_{\R^2}(0, \frac{\jj}{\mu})\subset D(R)$.
Since $\psn(\eps(t))\to 0$ and $\inv(\etath(t))-x(t)\to 0$, there exists $T_1(r)>0$ such that for all $t\geq T_1(r)$, if $x(t)\not\in D(r)$, then
$
\frac{\diff}{\diff t}V(x)<-\bar{m},
$
for some $\bar{m}>0$.
First, this implies that if $x(t)\in D(r)$ for some $t\geq T_1(r)$, then $x(s)\in D(r)$ for all $s\geq t$.
Second, for all $t\geq0$,
\begin{align*}
    V(x(T_1(r)+t))
    &=
    V(x(T_1(r)))
    + \int_0^{t}\frac{\diff}{\diff \tau}V(x(T_1(r)+\tau))\diff \tau
    \leq R - \bar{m}t 
    \tag{while $x(T_1(r)+t)\notin D(r)$.}
\end{align*}
Set $T_2(r) = \frac{R-r}{m}$ and $T(r) = T_1(r) + T_2(r)$. Then for all $t\geq T(r)$, $x(t)\in D(r)$, which concludes the proof.

\section{Conclusion}

The goal of the paper was to illustrate new approaches to tackle the problem of output stabilization at an unobservable target. As we explained, Luenberger observers can be employed, as long as some embedding is provided. To mitigate the effect of the observability singularity, we rely on the dissipativity of the error system and perturbations of the feedback law. The promise of such strategies is well exemplified by Theorem~\ref{th:finite}, where we were able to set up this approach. However, while classical output linearization methods allow to introduce linear observers, the dissipativity is not guaranteed. 
Relying on representation theory, we explored, for a specific control system, a different approach to obtain an embedding of the dynamics into a unitary infinite dimensional system. This allowed to design an observer for many nonlinear outputs of the original system while guaranteeing the dissipative nature of the error system, with Theorem~\ref{cor:main} also covering some of the cases treated in
Theorem~\ref{th:finite}.

Beyond the method we explored in the present article, we wish to stress that topological obstructions to output feedback stabilization
can be lifted when infinite-dimensional observers are considered.
More precisely, the obstruction brought up in \cite{Coron1994} regarding the stabilizability of $\dot x = u,\, y = x^2$ and extended in Corollary~\ref{cor:impossible} vanishes if one extends the usual definition of 
dynamic output feedback stabilizability
by allowing infinite-dimensional states fed by the output, as in Definition~\ref{def:stab_inf}.

Therefore, new infinite-dimensional embedding techniques for output feedback stabilization,
either based on the more general framework of \cite{Celle-etal.1989},
or on other infinite-dimensional observers, need to be investigated.

\subsubsection*{Acknowledgements}
The authors would like to thank Vincent Andrieu
for many fruitful discussions.\\
This research was partially funded by the French Grant ANR ODISSE (ANR-19-CE48-0004-01)
and by the ANR SRGI (ANR-15-CE40-0018).

\appendix

\section{The matrix \texorpdfstring{$\mQ$}{} is invertible}\label{app:det}
Let us compute the determinant of $\mQ$.

\begin{align*}
\det \mQ
=\begin{vmatrix}
K & \delta & 0\\
K \rot & 0 & - \delta \alpha\\
\vdots & \vdots & \vdots \\
K \rot^{n+1} & 0 & \delta(-\alpha)^{n+1}
\end{vmatrix}
&=(-1)^{n+1}\delta^2\alpha
\begin{vmatrix}
K \rot && 1\\
\vdots && \vdots \\
K \rot^{n+1} && (-\alpha)^n
\end{vmatrix}=-\delta^2\alpha
\sum_{k=0}^n \alpha^k Q(k)
\end{align*}
where
\[
Q(k)
=\begin{vmatrix}
\tilde K\rot^{0} \\ \vdots \\ \tilde K\rot^{k-1} \\ \tilde K\rot^{k+1} \\ \vdots \\ \tilde K \rot^{n}
\end{vmatrix},\qquad
\tilde K = K\rot,\qquad k\in\{0, \dots, n\}.
\]
Let $P(X)=\sum_{k=0}^n c_k X^k$ be the characteristic polynomial of $\rot$. Since $\rot$ is skew-symmetric and invertible, it holds that $n$ is even, $P$ is minimal for $\rot$, positive on $\R$, $c_n=1$. Then,
$$
A^n=-\sum_{k=0}^{n-1} c_k \rot^k. 
$$
Let $\Delta$ be the determinant of the Kalman observability matrix of $(\tilde{K}, \rot)$.
Since $(K, \rot)$ is observable and $\rot$ is invertible, $\Delta\neq0$.
Then for $k<n$,
$$
Q(k)
=
\begin{vmatrix}
\tilde K\rot^{0} \\ \vdots \\ \tilde K\rot^{k-1} \\ \tilde K\rot^{k+1} \\ \vdots 
\\ 
 \sum_{i=0}^{n-1} c_i \tilde K\rot^i.
\end{vmatrix}
=
\begin{vmatrix}
\tilde K\rot^{0} \\ \vdots \\ \tilde K\rot^{k-1} \\ \tilde K\rot^{k+1} \\ \vdots 
\\ 
- c_k \tilde K\rot^k.
\end{vmatrix}
=-c_k(-1)^{n-k} \Delta. 
$$
The case $k=n$ simply yields $Q(n)=\Delta$.
Then
$$
\det \mQ
=
\delta^2\alpha \Delta
\sum_{k=0}^n  c_k(-1)^{k}\alpha^k  
=
\delta^2\alpha \Delta P(-\alpha).
$$
Since $P$ is positive on $\R$, $\det \mQ\neq0$ as soon as $\alpha>0$.


\bibliographystyle{plain}
\bibliography{references}

\end{document}